\renewcommand{\thefootnote}{} 
\theoremstyle{plain} 
\newtheorem{theorem}{\indent\sc Theorem}[section]
\newtheorem{lemma}[theorem]{\indent\sc Lemma}
\newtheorem{corollary}[theorem]{\indent\sc Corollary}
\newtheorem{proposition}[theorem]{\indent\sc Proposition}
\theoremstyle{definition} 
\newtheorem{definition}[theorem]{\indent\sc Definition}
\newtheorem{remark}[theorem]{\indent\sc Remark}
\newcommand{\R}{\mathbb{R}}
\newcommand{\N}{\mathbb{N}}
\def\dim{\mathop{\mathrm{dim}}\nolimits}
\def\Im{\mathop{\mathrm{Im}}\nolimits}
\newcommand{\abs}[1]{\left\lvert#1\right\rvert}
\newcommand{\diam}{\mathop{\mathrm{diam}}\nolimits}
\newcommand{\id}{\mathop{\mathrm{id}}}
\newcommand{\calL}{\mathcal{L}}
\newcommand{\xcalL}[1]{{{}_{#1}\mathcal{L}}}
\newcommand{\calO}{\mathcal{O}}
\newcommand{\OdaX}{\calO \partial_a X}
\newcommand{\OdbY}{\calO \partial_b Y}
\newcommand{\loge}{\log^\epsilon}
\newcommand{\expe}{\exp_\epsilon}
\newcommand{\radcon}{\phi_{r}}
\newcommand{\radconi}{\phi}
\newcommand{\ds}[1]{\,\overline{#1}\,}
\newcommand{\Rp}{\R_{\geq 0}}
\newcommand{\DA}{\Omega}
\newcommand{\Dom}{\operatorname{Dom}}
\newcommand{\hyp}{\mathbb{H}}
\newcommand{\cay}{\mathrm{Cay}}
\newcommand{\dbdl}[2]{d_{#1,#2}}
\newcommand{\rad}{\mathop{\mathrm{rad}}\nolimits}
\newcommand{\radp}{\mathop{\mathrm{rad'}}\nolimits}
\newcommand{\Xvis}{X^{\text{Vis}}}
\newcommand{\Zvis}{Z^{\text{Vis}}}
\def\address#1#2{\begingroup
\noindent\parbox[t]{7.8cm}{%
\small{\scshape\ignorespaces#1}\par\vskip1ex
\noindent\small{\itshape E-mail address}%
\/: #2\par\vskip4ex}\hfill%
\endgroup}%
\title[]{\uppercase{Visual maps between coarsely convex spaces}}
\author{
%
%
\textsc{Yuuhei Ezawa, Tomohiro Fukaya} 
}
\date{} 
\begin{document}


\footnote{ 
2020 \textit{Mathematics Subject Classification}.
Primary 53C23; Secondary 20F67, 20F65, 20F69
}
\footnote{ 
\textit{Key words and phrases}. 
non-positively curved space, 
ideal boundary,
dimension raising map, coarse geometry.
}

\thanks{T.Fukaya was supported by JSPS KAKENHI Grant Number JP19K03471}
\renewcommand{\thefootnote}{\fnsymbol{footnote}} 

\begin{abstract}
The class of coarsely convex spaces is a coarse geometric analogue of
the class of nonpositively curved  Riemannian manifolds. It includes
Gromov hyperbolic spaces, CAT(0) spaces, proper injective metric spaces
and systolic complexes.
It is well known that quasi-isometric embeddings of Gromov hyperbolic spaces induce topological embeddings of their boundaries. 
Dydak and Virk studied maps between Gromov hyperbolic spaces which
induce continuous maps between their boundaries. In this paper, we
generalize their work to maps between coarsely convex spaces.
\end{abstract}

\maketitle

\section{Introduction}
\label{sec:introduction} The class of coarsely convex spaces is
introduced by the second author and Shin-ichi Oguni~\cite{FO-CCH}.
This class includes Gromov hyperbolic spaces, and
Busemann nonpositively curved spaces, especially, CAT(0) spaces.
Moreover, it is shown that systolic complexes and proper injective
metric spaces are coarsely convex spaces, by Osajda and
Przytycki~\cite{bdrySytolic}, respectively, by Descombes and
Lang~\cite{convex-bicomb}.
Recently, 
Chalopin, Chepoi, Genevois, Hirai and Osajda \cite{chalopin2020helly-arXiv}
showed that Helly graphs and coarsely Helly spaces are 
coarsely dense in their injective hulls,
especially, they are coarsely convex.
This result may become a rich source of coarsely convex
spaces. In fact, Osajda and Huang~\cite{LArtin-systolic} 
showed that Artin groups of type FC and
Garside groups act geometrically on Helly graphs, hence they do
on coarsely convex spaces. 
Haettel, Hoda and Petyt~\cite{haettel2020coarse}
showed that any
hierarchically hyperbolic space admits a new metric that is coarsely
Helly, so it is coarsely convex.

Let $X$ be a proper coarsely convex space. In~\cite{FO-CCH}, the authors
constructed the ideal boundary $\partial X$ of $X$, and they showed that
the open cone $\calO \partial X$ is coarsely homotopy equivalent to $X$
via the ``exponential map'' and the ``logarithmic map''. As a corollary,
it follows that the coarse Baum-Connes conjecture holds for coarsely
convex spaces. In this paper, we consider maps between coarsely convex spaces which induce continuous maps between their boundaries.

Dydak and Virk \cite{dydak-G-bdry-maps}
introduced the class of visual maps between Gromov hyperbolic spaces
that induce continuous maps between their boundaries. They also introduced its subclass, called radial maps, and showed that these maps
induce H\"older maps between boundaries.

Then they showed that radial coarsely $n$-to-one maps between proper
geodesic Gromov hyperbolic spaces induces continuous $n$-to-one maps
between boundaries. As a
consequence, they obtained dimension rising theorem for radial
maps, with respect to the topological dimensions of the Gromov
boundaries, and to the asymptotic dimensions of hyperbolic groups.

A metric space $(X,d)$ is coarsely convex if 
$X$ has a family of quasi-geodesics $\xcalL{X}$, 
called the \textit{system of good quasi-geodesics}, 
satisfying some conditions.
Especially, along quasi-geodesics in $\xcalL{X}$, the metric $d$ 
satisfies \textit{coarsely convex} inequality 
(Definition~\ref{def:cBNC}(\ref{qconvex})${^q}$).

In this paper, we introduce the class of visual maps between coarsely convex spaces, that do induce continuous maps between 
their ideal boundaries (Proposition~\ref{prop:visual-extension} and Corollary~\ref{cor:visu-bdl-cont}). 

We present a fixed point property of the maps induced by the visual isometries. Let $M$ be a metric space and 
$f\colon M\to M$ be an isometry. We say that $f$ is \textit{elliptic}
if the set $\{f^n(x)\colon n\in \N\}$ is bounded for some $x\in M$.

\begin{theorem}
\label{thm:fixed-pt}
 Let $X$ be a proper coarsely convex space and
 let $f\colon X\to X$ be an isometry. We suppose that $f$ is visual. Then,
 either $f$ is elliptic, or, the induced map 
 $\partial f\colon \partial X\to \partial X$ has a fixed point.
\end{theorem}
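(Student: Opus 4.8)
The plan is to adapt the classical trichotomy for isometries of Gromov hyperbolic spaces. If the orbit of a basepoint is bounded the isometry is elliptic; otherwise I will produce a boundary point as a limit of good quasi-geodesics running from the basepoint to the orbit, and show that advancing the orbit index by one perturbs these quasi-geodesics by only a bounded amount, so that the resulting boundary point is fixed by $\partial f$.

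Fix $x_0\in X$ and set $a_n:=d(x_0,f^n(x_0))$. Since $f$ is an isometry, the triangle inequality gives $a_{n+1}\ge a_n-a_1$, and shows that $f$ is elliptic if and only if $\{a_n\}_n$ is bounded. So assume $\{a_n\}$ is unbounded and pick $n_k\to\infty$ with $a_{n_k}\to\infty$; then $a_{n_k+1}\to\infty$ as well. For each $m$ let $\gamma_m\in\xcalL{X}$ be a good quasi-geodesic from $x_0$ to $f^m(x_0)$. All the $\gamma_m$ obey the same quasi-geodesic constants, so by properness (Arzel\`{a}--Ascoli) together with the limiting arguments used to construct $\partial X$ in \cite{FO-CCH}, we may pass to a subsequence so that $\gamma_{n_k}$ converges uniformly on compacta to a good quasi-geodesic ray $\gamma$ with $\gamma(0)=x_0$; write $\xi:=[\gamma]\in\partial X$. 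Because $a_{n_k}\to\infty$, the very construction of the topology on $\overline{X}:=X\cup\partial X$ in \cite{FO-CCH} gives $f^{n_k}(x_0)\to\xi$ in $\overline{X}$. After a further subsequence, $\gamma_{n_k+1}$ converges to a good quasi-geodesic ray $\gamma'$ from $x_0$, and likewise $f^{n_k+1}(x_0)\to[\gamma']$ in $\overline{X}$.

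The geometric heart of the argument is that $\gamma_{n_k}$ and $\gamma_{n_k+1}$ remain boundedly close: they are good quasi-geodesics issuing from the common point $x_0$ and ending at $f^{n_k}(x_0)$ and $f^{n_k+1}(x_0)$, whose distance equals $d(x_0,f(x_0))=:D$ for every $k$. The coarsely convex inequality (Definition~\ref{def:cBNC}(\ref{qconvex})$^q$) controls the distance between corresponding points of two good quasi-geodesics by a $(\lambda,\epsilon)$-controlled convex combination of the distances between their endpoints; combined with the fact that the two parameter lengths differ by at most a $D$-dependent constant, this yields $d_{\mathrm{Haus}}(\gamma_{n_k},\gamma_{n_k+1})\le C$ with $C=C(D)$ independent of $k$. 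Letting $k\to\infty$ gives $d_{\mathrm{Haus}}(\gamma,\gamma')\le C$, hence $[\gamma']=[\gamma]=\xi$, so in fact $f^{n_k+1}(x_0)\to\xi$ in $\overline{X}$.

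Finally, since $f$ is visual, Proposition~\ref{prop:visual-extension} and Corollary~\ref{cor:visu-bdl-cont} give the continuous extension $\overline{f}=f\sqcup\partial f\colon\overline{X}\to\overline{X}$. Applying continuity of $\overline{f}$ at $\xi$ to the two convergences above,
\[
\partial f(\xi)=\overline{f}\Big(\lim_k f^{n_k}(x_0)\Big)=\lim_k f\big(f^{n_k}(x_0)\big)=\lim_k f^{n_k+1}(x_0)=\xi,
\]
so $\xi$ is a fixed point of $\partial f$. (One can avoid invoking continuity of $\overline{f}$ on all of $\overline{X}$ by observing instead that $\partial f(\xi)$ is represented by the quasi-geodesic ray $f\circ\gamma$ --- which lies within bounded Hausdorff distance of a good quasi-geodesic because $f$ is visual --- and rerunning the endpoints-at-distance-$D$ estimate for $f\circ\gamma_{n_k}$ against $\gamma_{n_k+1}$.) The step I expect to require the most care is the uniform bound $d_{\mathrm{Haus}}(\gamma_{n_k},\gamma_{n_k+1})\le C(D)$: matching good quasi-geodesics of slightly different parameter lengths and extracting precisely the needed consequence of the coarsely convex inequality, and, relatedly, checking that the boundary topology and the induced map $\partial f$ interact with these limits exactly as used.
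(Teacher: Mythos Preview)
Your argument is correct and gives a genuinely different, more self-contained proof than the paper's. The paper's proof is a two-line citation: it observes that $\bar{X}=X\cup\partial X$ is a coarse compactification (by \cite[Proposition~6.6]{FO-CCH}) and then invokes a general fixed-point result for isometries on such compactifications (\cite[Corollary~1.7]{MR2804542}). Your route instead constructs the fixed point by hand: extract a limit ray $\gamma$ from the quasi-geodesics to $f^{n_k}(x_0)$, use Lemma~\ref{lem:t-ab} (which is exactly the consequence of the coarsely convex inequality you need) to bound $d(\gamma_{n_k}(t),\gamma_{n_k+1}(t))$ uniformly in terms of $a_1=d(x_0,f(x_0))$, and then apply continuity of $f\cup\partial f$ at $\xi$ from Corollary~\ref{cor:visu-bdl-cont}. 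The step you flagged as delicate is handled precisely by Lemma~\ref{lem:t-ab}; since the domain lengths both tend to infinity, the pointwise bound passes to the limits $\gamma,\gamma'$ and gives $[\gamma]=[\gamma']$. Your approach is longer but transparent and avoids the external black box; the paper's approach is terse but situates the result in a general framework.

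One small correction: your parenthetical alternative (``$\partial f(\xi)$ is represented by $f\circ\gamma$, which lies within bounded Hausdorff distance of a good quasi-geodesic because $f$ is visual'') is not quite right as stated. Visuality says nothing directly about $f\circ\gamma$ tracking a good quasi-geodesic; that is closer to the $\xcalL{X}$-$\xcalL{Y}$-equivariance condition (Definition~\ref{def:LL-equiv}). What you actually have, and what suffices, is the definition $\partial f(\xi)=\lim_n f(\gamma(n))$ together with the continuity at $\xi$ you already used. Your main argument does not rely on this parenthetical, so nothing is damaged.
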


Let $X$ and $Y$ be coarsely convex spaces with systems of 
good quasi-geodesics $\xcalL{X}$ and $\xcalL{Y}$, respectively.
We also introduce the class of
$\xcalL{X}$-radial and $\xcalL{X}$-$\xcalL{Y}$-equivariant maps, 
which is in fact a subclass of visual maps (Theorem~\ref{thm:visual-Lip}).

The following is a generalization of the large scale dimension raising
theorem for hyperbolic spaces by Dydak and Virk~\cite[Theorem 1.1]{dydak-G-bdry-maps}.
\begin{theorem}
\label{thm:dim-rise}
 Let $X$ and $Y$ be proper coarsely convex spaces, and let $\xcalL{X}$
 and $\xcalL{Y}$ be systems of good quasi-geodesics of $X$ and $Y$,
 respectively.  Let $f\colon X\to Y$ be a large scale Lipschitz map which is
 $\xcalL{X}$-radial and 
 $\xcalL{X}$-$\xcalL{Y}$-equivariant. If $f$ is coarsely $(n+1)$-to-one 
 and coarsely surjective then
 \begin{align*}
  \dim (\partial Y)\leq \dim (\partial X) + n.
 \end{align*}
\end{theorem}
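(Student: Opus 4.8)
The plan is to reduce the statement to the classical Hurewicz dimension-raising theorem, with all the geometric content going into translating the two coarse hypotheses on $f$ into topological properties of the induced boundary map. By Theorem~\ref{thm:visual-Lip} the map $f$ is visual, so by Proposition~\ref{prop:visual-extension} and Corollary~\ref{cor:visu-bdl-cont} it induces a continuous map $\partial f\colon \partial X\to\partial Y$, and since $X$ and $Y$ are proper coarsely convex, $\partial X$ and $\partial Y$ are compact metrizable. Thus $\partial f$ is a continuous, hence closed, map of compact metric spaces, and it suffices to prove: (i) $\partial f$ is surjective; (ii) $\abs{(\partial f)^{-1}(\eta)}\leq n+1$ for every $\eta\in\partial Y$. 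Granting these, Hurewicz's theorem --- a closed surjection of metrizable spaces all of whose point-preimages have at most $n+1$ points raises dimension by at most $n$ --- yields $\dim(\partial Y)\leq\dim(\partial X)+n$ (trivially if $\dim(\partial X)=\infty$).

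For (ii), fix $\eta\in\partial Y$ and a basepoint $x_0\in X$, and suppose toward a contradiction that $\xi_0,\dots,\xi_{n+1}$ are $n+2$ distinct points of $(\partial f)^{-1}(\eta)$, represented by $\xcalL{X}$-radial quasi-geodesics $\gamma_0,\dots,\gamma_{n+1}$ issuing from $x_0$. Since $f$ is $\xcalL{X}$-radial and $\xcalL{X}$-$\xcalL{Y}$-equivariant, each $f\circ\gamma_i$ stays within uniformly bounded distance of an $\xcalL{Y}$-radial quasi-geodesic from $f(x_0)$ representing $\eta$; as any two such radial quasi-geodesics of $Y$ (same basepoint, same endpoint at infinity) are uniformly close, there is $R>0$, independent of $t$, with $d(f(\gamma_i(t)),f(\gamma_0(t)))\leq R$ for all $i$ and all large $t$. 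Hence $\gamma_0(t),\dots,\gamma_{n+1}(t)\in f^{-1}(B(f(\gamma_0(t)),R))$, and since $f$ is coarsely $(n+1)$-to-one this preimage is a union of $n+1$ sets of diameter at most some $S=S(R)$. On the other hand, distinct points of $\partial X$ are represented by radial quasi-geodesics from $x_0$ whose pairwise distances $d(\gamma_i(t),\gamma_j(t))$ grow without bound (and, by coarse convexity, eventually exceed any prescribed constant); choosing $t$ large enough that all of them exceed $S$, the $n+2$ points cannot be distributed among $n+1$ sets of diameter $\leq S$, a contradiction. Therefore $\abs{(\partial f)^{-1}(\eta)}\leq n+1$.

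For (i), fix $\eta\in\partial Y$ and an $\xcalL{Y}$-radial quasi-geodesic $\beta$ from $f(x_0)$ representing it. Coarse surjectivity of $f$ supplies $C>0$ and points $x_t\in X$ with $d(f(x_t),\beta(t))\leq C$; since $f$ is large scale Lipschitz and $\beta$ is a ray, $d(x_0,x_t)\to\infty$. Choosing $\xcalL{X}$-good quasi-geodesics from $x_0$ to $x_t$ and invoking properness of $X$ together with an Arzel\`a--Ascoli argument, a subsequence converges uniformly on compact sets to a radial quasi-geodesic $\gamma$ from $x_0$, defining a point $\xi\in\partial X$. Using that $f$ is $\xcalL{X}$-radial, that the images $f([x_0,x_t])$ fellow-travel $\xcalL{Y}$-good quasi-geodesics joining $f(x_0)$ to $f(x_t)$, and that in a coarsely convex space any two good quasi-geodesics with coarsely the same endpoints fellow-travel (so these track initial segments of $\beta$), one checks that $f\circ\gamma$ stays within uniformly bounded distance of $\beta$ on all of $\R_{\geq 0}$; hence $\partial f(\xi)=\eta$, and $\partial f$ is surjective.

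The substantive difficulty is concentrated in the two \emph{asymptotic tracking} inputs used above: that $\xcalL{X}$-radial quasi-geodesics based at $x_0$ representing the same point of $\partial X$ stay boundedly close, while those representing distinct points diverge, with constants depending only on the coarsely convex structure. These are precisely the features of the \cite{FO-CCH} boundary that make the pigeonhole step in (ii) and the limit-ray construction in (i) go through; verifying them (and the needed uniformity) against the definitions recalled in the earlier sections is where the real work lies, the remainder being the formal appeal to Hurewicz's theorem.
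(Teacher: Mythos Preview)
Your approach is essentially the paper's own: reduce to Hurewicz by showing $\partial f$ is a closed continuous surjection between compact metric spaces with fibers of cardinality at most $n+1$. Two remarks on the execution.

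First, for surjectivity (your (i)) you are working much harder than necessary. The paper's Proposition~\ref{prop:csurj-surj} shows that \emph{visual} alone (plus coarse surjectivity) forces $\partial f$ to be onto, with no appeal to radial or equivariant hypotheses: pick $x_n$ with $f(x_n)$ near $\eta(n)$, extract a subsequence converging to some $[\gamma]\in\partial X$, and use continuity of $f\cup\partial f$ at $[\gamma]$ together with Lemma~\ref{lem:associate} to conclude $\partial f([\gamma])=[\eta]$. Your Arzel\`a--Ascoli plus fellow-traveling argument is correct in spirit but redundant.

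Second, in your (ii) the sentence ``since $f$ is $\xcalL{X}$-radial and $\xcalL{X}$-$\xcalL{Y}$-equivariant, each $f\circ\gamma_i$ stays within uniformly bounded distance of an $\xcalL{Y}$-radial quasi-geodesic representing $\eta$'' hides a genuine step: Definition~\ref{def:LL-equiv} is stated only for \emph{segments} in $\xcalL{X}_a$ and $\xcalL{Y}_b$, not rays. The passage to rays $\gamma_i\in\xcalL{X}_a^\infty$ is exactly Lemma~\ref{lem:LL-LLoo}, which upgrades $\xcalL{X}_a$-$\xcalL{Y}_b$-equivariance to $\xcalL{X}_a^\infty$-$\xcalL{Y}_b^\infty$-equivariance; once you invoke it, your pigeonhole argument is the same as the paper's Theorem~\ref{thm:coarseN-to-one}. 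The ``real work'' you allude to at the end is precisely this lemma together with Lemma~\ref{lem:HdistD} (for the uniform closeness of equivalent rays) and the coarsely convex inequality (for divergence of inequivalent rays), all of which are already in Section~\ref{sec:coars-conv-space}.
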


We also study extension problems. 
Let $f\colon \partial X\to \partial Y$ be a continuous map between
ideal boundaries of coarsely convex spaces. Then we construct 
a visual map $\rad f\colon X\to Y$, called radial extension of $f$,
which induces the given map $f$ of the boundaries.
We also show that for visual
$\xcalL{X}^\infty$-$\xcalL{Y}^\infty$-equivariant map
$F\colon X\to Y$, the radial extension of its induced map 
$\partial F\colon \partial X\to \partial Y$ is coarsely homotopic to
the given map $F$. These results are summarized in 
Theorem~\ref{thm:rad-ext}.

The organization of the paper is as follows. In
Section~\ref{sec:coars-conv-space}, we review the definition and basic
properties of coarsely convex spaces.  In
Section~\ref{sec:visual-map-radial}, we introduce the class of visual
maps and its subclasses. At the end of the section, we give a 
proof of Theorem~\ref{thm:fixed-pt}.

In Section~\ref{sec:coarsely-surjective}, we
study on coarsely surjective maps. In Section~\ref{sec:coarsely-n-one},
we show that coarsely $n$-to-one maps induce $n$-to-one maps 
between boundaries (Theorem~\ref{thm:coarseN-to-one}), 
and give a proof of Theorem~\ref{thm:dim-rise}. 
In Section~\ref{sec:example}, we discuss maps
between direct products of coarsely convex spaces and give an explicit
example of a map satisfying the assumptions in Theorem~\ref{thm:dim-rise}
whose domain and range are not Gromov hyperbolic spaces. 

In Section~\ref{sec:radial-extension}, 
we give the construction of the radial extension, and give a proof of 
Theorem~\ref{thm:rad-ext}.




\section{Coarsely convex space}
\label{sec:coars-conv-space}
In this section, we review the theory of coarsely convex spaces and their boundaries.

Throughout this paper, we use the following notations.  For points $x,y$
in a metric space $X$, we denote by $\ds{x,y}$ the distance between $x$
and $y$. For a subset $A\subset X$ and a point $x\in X$, we denote by
$\ds{x,A}$ the distance between $x$ and $A$. We denote by $\diam A$ the
diameter of $A$.
For a map $F\colon P\to Q$,
we denote by $\Dom F$ the domain of $F$, that is, $\Dom F = P$. 
For $a,b\in \R$, we denote by $a\vee b$ the maximum of $\{a,b\}$.

\subsection{Coarse map}
Let $X,Y$ be metric spaces. Let $f\colon X\to Y$ be a map.
\begin{enumerate}
 \item 
       The map $f$ is \textit{bornologous}
       if there exists a non-decreasing function
       $\psi\colon \R_{\geq 0} \to \R_{\geq 0}$ such that 
       for all $x,x'\in X$, we have
       \[
       \ds{f(x),f(x')} \leq \psi(\ds{x,x'}).
       \]
       Especially, for constants $A\geq 1,\, B\geq 0$, 
       $f$ is a \textit{$(A,B)$-large scale Lipschitz} map if 
       for all $x,x'\in X$, we have $\ds{f(x),f(x')}\leq A\ds{x,x'}+B$.
 \item The map $f$ is \textit{metrically proper} if for each bounded subset
       $B\subset Y$, the inverse image $f^{-1}(B)$ is bounded.
 \item The map $f$ is \textit{coarse} if it is bornologous and 
       metrically proper.
 \item The map $f$ is a \textit{coarse embedding} if it is bornologous and there exists 
       a non-decreasing function $\psi'\colon \Rp\to\Rp$ such that 
       $\psi'(t)\to \infty\, (t\to \infty)$ and for all $x,x'\in X$, we have
       \[
       \psi'(\ds{x,x'})\leq \ds{f(x),f(x')}.
       \]
 \item The map $f$ is \textit{coarsely surjective} if there exists $C\geq 0$ 
       such that $C$-neighbourhood of $f(X)$ is $Y$.
 \item The map $f$ is a \textit{coarse equivalence map} if it is 
       a coarse embedding and coarsely surjective.
\end{enumerate}
For more details on the above notions, we refer~\cite{MR2007488}.


\subsection{Coarse homotopy}

 Let $f,g\colon X\to Y$ be coarse maps between metric
 spaces. The maps $f$ and $g$ are 
 {\itshape coarsely homotopic} if there exists a metric subspace 
$Z = \{(x,t):0\leq t\leq T_x\}$ 
of $X\times \R_{\geq 0}$ and a coarse map
 $h\colon Z\to Y$, such that
\begin{enumerate}
 \item the map $X \ni x\mapsto T_x\in \R_{\geq 0}$ is bornologous,
 \item $h(x,0) = f(x)$, and
 \item $h(x,T_x) = g(x)$.
\end{enumerate}
Here we equip $X\times \R_{\geq 0}$ with the $l_1$-metric, that is, 
$d_{X\times \R_{\geq 0}}((x,t),(y,s)):= \ds{x,y}+ \abs{t-s}$ for 
$(x,t),(y,s)\in X\times \R_{\geq 0}$.


\subsection{Coarsely convex space}
 
\begin{definition}
\label{def:cBNC}
Let $X$ be a metric space. Let $\lambda\geq 1$, $k \geq 0$, $E\geq 1$,
and $C\geq 0$ be constants. 
Let $\theta\colon \Rp\to\Rp$ be a non-decreasing function.
Let $\calL$ be a family of $(\lambda,k)$-quasi-geodesic segments.  
The metric space $X$ is {\itshape
$(\lambda,k,E,C,\theta,\calL)$-coarsely convex}, if
$\calL$ satisfies the following.
\begin{enumerate}[(i)$^q$] 
 \item \label{qconn}
       For $v,w\in X$, there exists a 
       quasi-geodesic segment
       $\gamma\in \calL$ with $\Dom \gamma = [0,a]$,
       $\gamma(0) = v$ and $\gamma(a) = w$.
 \item \label{qconvex}
       Let $\gamma, \eta \in \calL$ be 
       quasi-geodesic segments 
       with $\Dom \gamma = [0,a]$ and $\Dom \eta = [0,b]$. 
       Then for 
       $t\in [0,a]$, $s\in [0,b]$, and $0\leq c\leq 1$, we have that 
       \begin{align*}
	\ds{\gamma(ct),\eta(cs)}
	\leq cE\ds{\gamma(t),\eta(s)} 
	      + (1-c)E\ds{\gamma(0), \eta(0)}+ C.
       \end{align*}
 \item \label{qparam-reg}       
       Let $\gamma,\eta\in \calL$ be quasi-geodesic segments 
       with $\Dom \gamma = [0,a]$ and $\Dom \eta = [0,b]$. 
       Then for $t\in [0,a]$ and $s\in [0,b]$, we have
       \begin{align*}
	\abs{t-s} \leq \theta(\ds{\gamma(0),\eta(0)}+\ds{\gamma(t),\eta(s)}).
       \end{align*}
\end{enumerate} 
The family $\calL$ satisfying (\ref{qconn})$^q$, 
(\ref{qconvex})$^q$, and  (\ref{qparam-reg})$^q$
is called a {\itshape system of good quasi-geodesic segments}, and elements 
 $\gamma\in \calL$ are called {\itshape good quasi-geodesic segments}.
We call the inequality in (\ref{qconvex})$^q$ the {\itshape coarsely convex inequality}.
\end{definition}

We say that a metric space $X$ is a {\itshape coarsely convex} space if
there exist constants $\lambda,k,E,C$,
a non-decreasing function $\theta\colon \Rp\to \Rp$, 
and a family of
$(\lambda,k)$-quasi-geodesic segments $\calL$ such that $X$ is
$(\lambda,k,E,C,\theta,\calL)$-coarsely convex.  

We remark that if $\calL$ consists of only geodesic segments,
then $\calL$ satisfies~(\ref{qparam-reg})$^q$ by the triangle
inequality. We say that a metric space is {\itshape geodesic $(C,\calL)$-coarsely convex}
if it is $(1,0,1,C,\mathrm{id}_{\Rp},\calL)$-coarsely convex.

The class of coarsely convex spaces is closed under direct products.
Let $X$ and $Y$ be metric spaces and $\gamma\colon [0,a]\to X$ 
and $\eta\colon [0,b]\to Y$ be maps. Then we define the map 
$\gamma\oplus \eta\colon [0,a+b]\to X\times Y$ by setting
\begin{align*}
 \gamma\oplus \eta(t):=   \left(\gamma\left(\frac{a}{a+b}t\right),
  \eta\left(\frac{b}{a+b}t\right)\right), \quad t\in [0,a+b].
\end{align*}
If $\gamma$ and $\eta$ are quasi-geodesic segments, so is $\gamma\oplus \eta$.

\begin{proposition}[{\cite{FO-CCH}}]
 \label{prop:productspace} Let $(X,d_X)$ and $(Y,d_Y)$ be coarsely
 convex metric spaces with systems of good quasi-geodesic segments $\xcalL{X}$ and
 $\xcalL{Y}$, respectively.
 Then the product with the $\ell_1$-metric
 $(X\times Y, d_{X\times Y})$ is coarsely convex, whose 
 system of good quasi-geodesic segments $\xcalL{X\times Y}$ consists of
 all $\gamma\oplus \eta$ for $\gamma\in \xcalL{X}$ and $\eta\in \xcalL{Y}$.
 \end{proposition}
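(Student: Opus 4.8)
The plan is to check Definition~\ref{def:cBNC} directly for $(X\times Y,d_{X\times Y})$ and the family $\xcalL{X\times Y}$, reading off explicit constants from those of the factors. Write $(\lambda_X,k_X,E_X,C_X,\theta_X)$ and $(\lambda_Y,k_Y,E_Y,C_Y,\theta_Y)$ for data witnessing coarse convexity of $X$ and $Y$. The starting point is the reparametrization: for $\gamma\in\xcalL{X}$ with $\Dom\gamma=[0,a]$ and $\eta\in\xcalL{Y}$ with $\Dom\eta=[0,b]$ and $a+b>0$,
\[
 (\gamma\oplus\eta)(r)=\Bigl(\gamma\bigl(\tfrac{a}{a+b}r\bigr),\ \eta\bigl(\tfrac{b}{a+b}r\bigr)\Bigr),\qquad r\in[0,a+b],
\]
with the convention that $\gamma\oplus\eta$ is the constant map at $(\gamma(0),\eta(0))$ when $a=b=0$. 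Using that $\gamma,\eta$ are quasi-geodesics together with $\tfrac{a}{a+b}+\tfrac{b}{a+b}=1$, a one-line estimate in the $\ell_1$-metric shows $\gamma\oplus\eta$ is a $(\lambda_X\vee\lambda_Y,\,k_X+k_Y)$-quasi-geodesic segment, so $\xcalL{X\times Y}$ has uniform quasi-geodesic constants. Condition~(\ref{qconn})$^q$ is then immediate: given $(v_1,v_2),(w_1,w_2)$, join $v_1$ to $w_1$ by $\gamma\in\xcalL{X}$ and $v_2$ to $w_2$ by $\eta\in\xcalL{Y}$ using~(\ref{qconn})$^q$ in the factors, and $\gamma\oplus\eta\in\xcalL{X\times Y}$ runs from $(v_1,v_2)$ to $(w_1,w_2)$.

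For~(\ref{qconvex})$^q$, given $\gamma\oplus\eta$ with $\gamma\colon[0,a]\to X$, $\eta\colon[0,b]\to Y$ and $\gamma'\oplus\eta'$ with $\gamma'\colon[0,a']\to X$, $\eta'\colon[0,b']\to Y$, and parameters $t,s,c$, I would set $t_1=\tfrac{a}{a+b}t$, $t_2=\tfrac{b}{a+b}t$, $s_1=\tfrac{a'}{a'+b'}s$, $s_2=\tfrac{b'}{a'+b'}s$, and use the key point that rescaling the $\oplus$-parameter by $c$ distributes over the factors: $(\gamma\oplus\eta)(ct)=(\gamma(ct_1),\eta(ct_2))$, and likewise for the primed segment. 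Then I apply~(\ref{qconvex})$^q$ in $X$ to the pair $\gamma(ct_1),\gamma'(cs_1)$ and in $Y$ to the pair $\eta(ct_2),\eta'(cs_2)$ and add the two inequalities. Since $d_{X\times Y}$ is the $\ell_1$-metric it splits as the sum of the coordinate distances, so $d_X(\gamma(t_1),\gamma'(s_1))+d_Y(\eta(t_2),\eta'(s_2))=d_{X\times Y}((\gamma\oplus\eta)(t),(\gamma'\oplus\eta')(s))$ and similarly at parameter $0$; bounding $E_X,E_Y\leq E_X\vee E_Y$, the right-hand side collapses to $c(E_X\vee E_Y)\,d_{X\times Y}((\gamma\oplus\eta)(t),(\gamma'\oplus\eta')(s))+(1-c)(E_X\vee E_Y)\,d_{X\times Y}((\gamma\oplus\eta)(0),(\gamma'\oplus\eta')(0))+(C_X+C_Y)$, which is the coarsely convex inequality with $E_{X\times Y}=E_X\vee E_Y$ and $C_{X\times Y}=C_X+C_Y$.

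For~(\ref{qparam-reg})$^q$, with the same $t_i,s_i$ I would use $t=t_1+t_2$ and $s=s_1+s_2$, hence $\abs{t-s}\leq\abs{t_1-s_1}+\abs{t_2-s_2}$, bound each term by~(\ref{qparam-reg})$^q$ in the respective factor, bound $d_X$ and $d_Y$ on the relevant pairs by $d_{X\times Y}$, and invoke monotonicity of $\theta_X,\theta_Y$ to conclude with $\theta_{X\times Y}:=\theta_X+\theta_Y$. Altogether $X\times Y$ is $(\lambda_X\vee\lambda_Y,\,k_X+k_Y,\,E_X\vee E_Y,\,C_X+C_Y,\,\theta_X+\theta_Y,\,\xcalL{X\times Y})$-coarsely convex. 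No step is hard; the only things to be careful about are the bookkeeping in~(\ref{qconvex})$^q$ — namely the observation that scaling the $\oplus$-parameter by $c$ scales both coordinate parameters by the same $c$, which is exactly what lets the two one-sided inequalities be summed cleanly — and handling the length-$0$ segments by convention so that the fractions $\tfrac{a}{a+b}$ make sense.
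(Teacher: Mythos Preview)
The paper does not prove this proposition itself; it simply cites \cite{FO-CCH} (the result is Proposition~3.3 there, as the reference in the proof of Lemma~\ref{lem:prod-metricsp} indicates). Your direct verification of the three axioms in Definition~\ref{def:cBNC} is correct and is exactly the natural argument: the key observation, which you isolate, is that the $\oplus$-parametrization is linear in the sense that $(\gamma\oplus\eta)(ct)=(\gamma(c\,\tfrac{a}{a+b}t),\eta(c\,\tfrac{b}{a+b}t))$, so scaling the product parameter by $c$ scales both factor parameters by the same $c$, and then the $\ell_1$-metric lets the two factorwise inequalities be added. Your explicit constants $(\lambda_X\vee\lambda_Y,\,k_X+k_Y,\,E_X\vee E_Y,\,C_X+C_Y,\,\theta_X+\theta_Y)$ are correct.
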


In the rest of this section, let $X$ be a $(\lambda,k,E,C,\theta,\calL)$-coarsely convex space.

\subsection{Ideal boundary}
Let $\gamma\colon \Rp \to X$ be a
map.  Let $\gamma_n\colon [0,a_n]\to X$ be
quasi-geodesic segments in $X$.  A sequence $\{(\gamma_n,a_n)\}_{n}$ 
with $a_n\to \infty$
is
an {\itshape $\calL$-approximate} sequence for $\gamma$
if for all $n$, we have
$\gamma_n\in \calL$, $\gamma_n(0)=\gamma(0)$ and
$\{\gamma_n\}_{n}$ converges to $\gamma$ pointwise on $\N$.
A map $\gamma\colon\Rp\to X$ 
is {\itshape $\calL$-approximatable} if
there exists an $\calL$-approximate sequence for $\gamma$.

We define a family of quasi-geodesic rays, denoted by $\calL^\infty$, 
as a family consisting of all $\calL$-approximatable maps 
$\gamma\colon \Rp\to X$ such that 
$\gamma(t)=\gamma(\lfloor t\rfloor)$ for all $t\in \Rp$.
We set $\bar{\calL}:= \calL\cup \calL^\infty$. 
We remark that $\gamma\in\calL^\infty$ is a $(\lambda,k_1)$-quasi-geodesic ray, 
where $k_1:= \lambda+k$. See \cite[Lemma 4.1]{FO-CCH}.

\begin{proposition}[{\cite[Proposition 4.2]{FO-CCH}}]
\label{prop:qgeod-ray-convex}
The family $\bar{\calL}$ satisfies the following.
 \begin{enumerate}[\hspace{7mm}$(1)$]
 \item \label{qrayconvex}       
       Let $\gamma, \eta \in \bar{\calL}$ be quasi-geodesics.
       Then for $t\in \Dom \gamma$, $s\in \Dom \eta$ and $0\leq c\leq 1$, we have 
       \begin{align*}
	\ds{\gamma(ct),\eta(cs)}
	\leq cE\ds{\gamma(t),\eta(s)} +(1-c)E\ds{\gamma(0),\eta(0)}+ D,
       \end{align*}
       	where $D:= 2(1+E)k_1+ C$.
 \item \label{qrayparam-reg} We define a non-decreasing function
       $\tilde{\theta}\colon \Rp\to \Rp$ by 
       $\tilde{\theta}(t):=\theta(t+1)+1$.
       For $\gamma,\eta\in \bar{\calL}$ and for $t\in \Dom \gamma$, $s\in \Dom \eta$, we have
       \begin{align*}
	\abs{t-s}\leq 
	\tilde{\theta}(\ds{\gamma(0),\eta(0)}+\ds{\gamma(t),\eta(s)}).
       \end{align*}       
 \end{enumerate}
\end{proposition}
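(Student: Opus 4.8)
The plan is to derive both assertions from the corresponding properties of $\calL$ --- the coarsely convex inequality (\ref{qconvex})$^q$ and the parameter-regularity condition (\ref{qparam-reg})$^q$ of Definition~\ref{def:cBNC} --- by approximating a ray in $\calL^\infty$ by good quasi-geodesic \emph{segments} and passing to the limit, while a segment $\gamma\in\calL$ is treated by means of the constant approximating sequence; in this way the three cases (both quasi-geodesics in $\calL$, one in $\calL$ and one in $\calL^\infty$, both in $\calL^\infty$) are handled at once. First I would establish a uniform approximation statement: for every $\gamma\in\bar{\calL}$ there is a sequence $\gamma_n\in\calL$ with $\gamma_n(0)=\gamma(0)$, with $u\in\Dom\gamma_n$ for all large $n$ whenever $u\in\Dom\gamma$ is fixed, and with
\[
\limsup_{n\to\infty}\ds{\gamma_n(u),\gamma(u)}\leq k_1\qquad\text{for every }u\in\Dom\gamma.
\]
If $\gamma\in\calL$ one takes $\gamma_n:=\gamma$, so the left-hand side is $0$; if $\gamma\in\calL^\infty$ one takes an $\calL$-approximate sequence $\{(\gamma_n,a_n)\}$, and then $\ds{\gamma_n(u),\gamma_n(\lfloor u\rfloor)}\leq\lambda+k=k_1$ because $\gamma_n$ is a $(\lambda,k)$-quasi-geodesic, while $\gamma_n(\lfloor u\rfloor)\to\gamma(\lfloor u\rfloor)=\gamma(u)$ by pointwise convergence on $\N$ and $\gamma_n(0)=\gamma(0)$, together with the step-function property built into the definition of $\calL^\infty$.

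For assertion (1), I would fix $\gamma,\eta\in\bar{\calL}$, $t\in\Dom\gamma$, $s\in\Dom\eta$ and $c\in[0,1]$, choose approximating sequences $\gamma_n,\eta_n$ as above, and apply (\ref{qconvex})$^q$ to the pair $\gamma_n,\eta_n$ --- valid for large $n$, since $ct\leq t$ and $cs\leq s$. Inserting $\gamma(ct),\eta(cs)$ on the left and $\gamma(t),\eta(s)$ on the right by the triangle inequality, using $\gamma_n(0)=\gamma(0)$ and $\eta_n(0)=\eta(0)$, and then letting $n\to\infty$ through the approximation statement should give
\[
\ds{\gamma(ct),\eta(cs)}\leq cE\ds{\gamma(t),\eta(s)}+(1-c)E\ds{\gamma(0),\eta(0)}+2k_1+2cEk_1+C;
\]
since $c\leq 1$ and $E\geq 1$ one has $2cEk_1\leq 2Ek_1$, and $2k_1+2Ek_1+C=2(1+E)k_1+C=D$, which is exactly the asserted bound.

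For assertion (2), I would fix $\gamma,\eta\in\bar{\calL}$, $t\in\Dom\gamma$, $s\in\Dom\eta$, and set $\langle t\rangle:=\lfloor t\rfloor$ if $\gamma\in\calL^\infty$ and $\langle t\rangle:=t$ if $\gamma\in\calL$ (and similarly $\langle s\rangle$), so that $\gamma(\langle t\rangle)=\gamma(t)$, $\langle t\rangle\in\Dom\gamma_n$ for large $n$, and $\gamma_n(\langle t\rangle)\to\gamma(t)$ \emph{exactly} (for $\calL^\infty$ because $\langle t\rangle\in\N$, for $\calL$ trivially). Applying (\ref{qparam-reg})$^q$ to $\gamma_n,\eta_n$ at $\langle t\rangle,\langle s\rangle$ and noting that $\ds{\gamma_n(\langle t\rangle),\eta_n(\langle s\rangle)}\to\ds{\gamma(t),\eta(s)}$, hence is $\leq\ds{\gamma(t),\eta(s)}+1$ for large $n$, one gets from $\gamma_n(0)=\gamma(0)$, $\eta_n(0)=\eta(0)$ and the monotonicity of $\theta$ that
\[
\abs{\langle t\rangle-\langle s\rangle}\leq\theta\bigl(\ds{\gamma(0),\eta(0)}+\ds{\gamma(t),\eta(s)}+1\bigr).
\]
Finally, the quantities $t-\langle t\rangle$ and $s-\langle s\rangle$ lie in $[0,1)$, so $\abs{(t-\langle t\rangle)-(s-\langle s\rangle)}<1$ and hence $\abs{t-s}<\abs{\langle t\rangle-\langle s\rangle}+1$; combining this with the previous display and the definition $\tilde{\theta}(x)=\theta(x+1)+1$ yields $\abs{t-s}\leq\tilde{\theta}(\ds{\gamma(0),\eta(0)}+\ds{\gamma(t),\eta(s)})$.

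The triangle-inequality bookkeeping in (1) and (2) is routine. The step I expect to be the main obstacle is the passage to the limit in (2): since $\theta$ is only assumed non-decreasing, hence possibly discontinuous, one cannot take the exact limit inside $\theta$, and this is precisely what the ``$\,+1$'' inside $\theta$ in the definition of $\tilde\theta$ is there to absorb, while the outer ``$+1$'' absorbs the error of replacing $t,s$ by the integer arguments at which the approximating sequences are actually known to converge. Checking that the mixed case --- one quasi-geodesic a segment and the other a ray --- introduces no further loss, in particular that the fractional-part cancellation $\abs{t-s}<\abs{\langle t\rangle-\langle s\rangle}+1$ still applies there, is the subtle point to get right.
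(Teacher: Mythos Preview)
The paper does not supply its own proof of this proposition: it is quoted verbatim from \cite[Proposition 4.2]{FO-CCH} and used as a black box. There is therefore no in-paper argument to compare against.

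Your plan is sound and, as far as I can tell, correct. The key device---treating segments by the constant approximating sequence and rays by their $\calL$-approximate sequences, then passing to the limit in (\ref{qconvex})$^q$ and (\ref{qparam-reg})$^q$---is the natural one, and your bookkeeping recovers exactly the constants $D=2(1+E)k_1+C$ and $\tilde\theta(t)=\theta(t+1)+1$ appearing in the statement. In part (1) the two triangle-inequality insertions cost $2k_1$ on the left and $2cEk_1\leq 2Ek_1$ on the right, giving $D$; in part (2) the trick of evaluating at $\langle t\rangle$ so as to have \emph{exact} convergence (rather than convergence up to $k_1$) is the right move, and your observation that the possible discontinuity of $\theta$ is absorbed by the ``$+1$'' inside while the floor error is absorbed by the ``$+1$'' outside explains precisely why $\tilde\theta$ has its particular form. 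The mixed case introduces no extra loss, as you note: with $\langle t\rangle=t$ for a segment, $t-\langle t\rangle=0\in[0,1)$, so the bound $\lvert(t-\langle t\rangle)-(s-\langle s\rangle)\rvert<1$ still holds.
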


\begin{lemma}[{\cite[Lemma 4.3]{FO-CCH}}]
\label{lem:t-ab}
 Let $\gamma,\eta\in \bar{\calL}$ be quasi-geodesics such that
 $\gamma(0)=\eta(0)$.
 For all $a\in \Dom \gamma$, $b\in \Dom \eta$ and 
 $0\leq t\leq \min\{a,b\}$, we have
 \begin{align*}
  \ds{\gamma(t),\eta(t)}
  &\leq E(\ds{\gamma(a),\eta(b)} 
    + \lambda\tilde{\theta}(\ds{\gamma(a),\eta(b)}) + k_1) + D.
 \end{align*} 
\end{lemma}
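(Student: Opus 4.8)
The plan is to reduce the two-point estimate $\ds{\gamma(t),\eta(t)}$ to an application of the coarsely convex inequality in Proposition~\ref{prop:qgeod-ray-convex}(\ref{qrayconvex}) together with the parameter-regularity bound in (\ref{qrayparam-reg}). Fix $a\in\Dom\gamma$, $b\in\Dom\eta$, and $0\le t\le\min\{a,b\}$. If $t=0$ the statement is trivial since $\gamma(0)=\eta(0)$, so assume $t>0$. The key device is to reparametrise so that the scaling factor $c$ brings the parameter $a$ (resp.\ $b$) down to $t$: set $c:=t/a\le 1$ when comparing along $\gamma$, but since the two quasi-geodesics have different terminal parameters we must first align them. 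Concretely, I would apply (\ref{qrayconvex}) with the points $\gamma(a)$ and $\eta(b)$ as the ``far'' endpoints and $c := t/\max\{a,b\}$, or more cleanly, handle the two coordinates separately by first comparing $\gamma(t)$ with $\gamma\big(\tfrac{t}{a}\cdot a\big)$ — which is a tautology — and instead use the inequality in the form that reads off $\gamma(ct)$ and $\eta(cs)$ from $\gamma(t),\eta(s)$ and $\gamma(0),\eta(0)$ with the common contraction $c$.

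Here is the cleaner route I expect to take. Apply Proposition~\ref{prop:qgeod-ray-convex}(\ref{qrayconvex}) to $\gamma$ and $\eta$ with parameters $a\in\Dom\gamma$, $b\in\Dom\eta$, and contraction $c := t/a$ if $a\ge b$ (the symmetric case is analogous). This gives
\begin{align*}
 \ds{\gamma(t),\eta(cb)} \le cE\ds{\gamma(a),\eta(b)} + (1-c)E\ds{\gamma(0),\eta(0)} + D = cE\ds{\gamma(a),\eta(b)} + D,
\end{align*}
using $\gamma(0)=\eta(0)$. Since $c\le 1$ we get $\ds{\gamma(t),\eta(cb)}\le E\ds{\gamma(a),\eta(b)} + D$. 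It remains to control $\ds{\eta(cb),\eta(t)}$, i.e.\ to show $cb$ and $t$ are close parameters on the $(\lambda,k_1)$-quasi-geodesic $\eta$. For that, note $\abs{cb - t}$: since $t\le b$ and $t\le a$, one has $cb = tb/a$, and comparing $\gamma(a),\eta(b)$ via (\ref{qrayparam-reg}) with $\gamma(0)=\eta(0)$ yields $\abs{a-b}\le\tilde\theta(\ds{\gamma(a),\eta(b)})$, whence $\abs{t - cb} = t\abs{a-b}/a \le \abs{a-b} \le \tilde\theta(\ds{\gamma(a),\eta(b)})$. Feeding this into the $(\lambda,k_1)$-quasi-geodesic inequality for $\eta$ gives $\ds{\eta(cb),\eta(t)} \le \lambda\abs{cb - t} + k_1 \le \lambda\tilde\theta(\ds{\gamma(a),\eta(b)}) + k_1$. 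Combining via the triangle inequality,
\begin{align*}
 \ds{\gamma(t),\eta(t)} \le \ds{\gamma(t),\eta(cb)} + \ds{\eta(cb),\eta(t)} \le E\ds{\gamma(a),\eta(b)} + \lambda\tilde\theta(\ds{\gamma(a),\eta(b)}) + k_1 + D,
\end{align*}
which is slightly stronger than claimed (the claimed bound has $E$ multiplying the whole middle sum), and in particular implies it since $E\ge 1$.

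The main obstacle I anticipate is bookkeeping around the asymmetry between $a$ and $b$: the contraction factor $c$ can only be chosen to normalise one of the two parameters to $t$, so one must either assume $a\ge b$ without loss of generality (and check the argument is symmetric) or run the comparison through the larger of the two. A secondary subtlety is making sure the auxiliary point $\eta(cb)$ actually lies in $\Dom\eta$, which holds because $c\le 1$ forces $cb\le b$. Everything else is routine: the quasi-geodesic inequality for $\eta\in\bar\calL$ (with constants $\lambda,k_1$, as recorded after the definition of $\calL^\infty$), the parameter-regularity bound (\ref{qrayparam-reg}), and one application of the coarsely convex inequality.
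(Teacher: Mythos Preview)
Your argument is correct. The paper does not prove this lemma itself---it is quoted from \cite{FO-CCH}---so there is no in-paper proof to compare against directly. Judging from the nearby proof of Lemma~\ref{lem:prod-est-bigon}, the argument in \cite{FO-CCH} most likely runs in the reverse order: first use parameter regularity to get $\abs{a-b}\le\tilde{\theta}(\alpha)$ with $\alpha:=\ds{\gamma(a),\eta(b)}$, then the quasi-geodesic inequality to pass to the common parameter $u:=\min\{a,b\}$, obtaining $\ds{\gamma(u),\eta(u)}\le \alpha+\lambda\tilde{\theta}(\alpha)+k_1$, and finally apply the coarsely convex inequality with $c=t/u$ to contract down to $t$. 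That route lands exactly on the stated constant $E(\alpha+\lambda\tilde{\theta}(\alpha)+k_1)+D$, which explains why the factor $E$ multiplies the whole sum. Your route---apply convexity first with $c=t/\max\{a,b\}$, then correct the parameter mismatch via the quasi-geodesic inequality for $\eta$---is equally valid and, as you note, yields the slightly sharper bound $E\alpha+\lambda\tilde{\theta}(\alpha)+k_1+D$.
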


 For quasi-geodesic rays $\gamma$ and $\eta$ in $\calL^\infty$, 
 we say that $\gamma$ and $\eta$ are {\itshape equivalent} if
 \begin{align*}
 \sup\{\ds{\gamma(t),\eta(t)}:t\in \Rp\}<\infty,
 \end{align*}
 and we denote by $\gamma \sim \eta$. 
 For $\gamma\in \calL^\infty$, we denote by $[\gamma]$ its equivalence
 class.
 The {\itshape ideal boundary} of $X$ is the
 set $\partial X:= \calL^\infty/ \sim$ of equivalence classes of
 quasi-geodesic rays in $\calL^\infty$.

 Let $O\in X$ be a base point. We define $\calL_O^\infty$ as
 the subset of $\calL^\infty$ consisting of 
 all quasi-geodesic rays in $\calL^\infty$ stating at $O$.
 The {\itshape ideal boundary} of $X$ with respect to $O$ is the
 set $\partial_O X:= \calL_O^\infty/ \sim$ of equivalence classes of
 quasi-geodesic rays in $\calL_O^\infty$.

\begin{lemma}[{\cite[Lemma 4.5]{FO-CCH}}]
\label{lem:HdistD}
 For $\gamma, \eta\in \calL_O^\infty$, 
 $\gamma$ and $\eta$ are equivalent if and only if $\ds{\gamma(t),\eta(t)}\leq D$ for all
 $t\in \Rp$. 
\end{lemma}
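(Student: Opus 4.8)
The plan is to prove the nontrivial direction: if $\gamma,\eta\in\calL_O^\infty$ are equivalent, then in fact $\ds{\gamma(t),\eta(t)}\le D$ for \emph{every} $t$. The reverse implication is immediate from the definition of equivalence. So assume $M:=\sup\{\ds{\gamma(s),\eta(s)}:s\in\Rp\}<\infty$ and fix $t\in\Rp$; we want to bound $\ds{\gamma(t),\eta(t)}$ by $D=2(1+E)k_1+C$, independently of $M$.

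The key tool is the convexity inequality for $\bar\calL$ from Proposition~\ref{prop:qgeod-ray-convex}(\ref{qrayconvex}). Since $\gamma(0)=\eta(0)=O$, for any parameter $a\ge t$ and any $c\in[0,1]$ we have
\begin{align*}
\ds{\gamma(ca),\eta(ca)}\le cE\ds{\gamma(a),\eta(a)}+D\le cEM+D.
\end{align*}
Now choose, for a given large $a$, the scaling factor $c=t/a$, so that $ca=t$ on both sides; this gives $\ds{\gamma(t),\eta(t)}\le (t/a)EM+D$. Letting $a\to\infty$ (which is legitimate because $\gamma,\eta$ are rays, so their domains are all of $\Rp$) the first term tends to $0$, and we conclude $\ds{\gamma(t),\eta(t)}\le D$, as desired. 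A small point to check is that $\gamma(t)=\gamma(\lfloor t\rfloor)$ by definition of $\calL^\infty$, so evaluating at $a$ or at $\lfloor a\rfloor$ makes no essential difference; one can take $a$ along integers if one prefers to stay within the pointwise-convergence setup.

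I do not expect a serious obstacle here: the whole content is the rescaling trick $c=t/a$ together with the uniform convexity constant $D$ coming from Proposition~\ref{prop:qgeod-ray-convex}, which already absorbs the quasi-geodesic defect $k_1$. The only thing to be slightly careful about is that the constant $D$ in (\ref{qrayconvex}) is genuinely independent of the rays and of the parameters, so that the limit argument produces a clean bound with no residual dependence on $M$ or $t$; since that is exactly how $D$ is defined in the cited proposition, the argument goes through verbatim.
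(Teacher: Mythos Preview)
Your argument is correct. The paper itself does not supply a proof of this lemma; it is quoted from \cite[Lemma 4.5]{FO-CCH} without proof, so there is no in-paper argument to compare against. Your rescaling trick---applying the convexity inequality of Proposition~\ref{prop:qgeod-ray-convex}(\ref{qrayconvex}) with $s=t=a$ and $c=t_0/a$, then letting $a\to\infty$---is exactly the natural way to derive this statement from the convexity inequality for rays, and it is almost certainly what the cited reference does as well. The key observation, which you identify correctly, is that $\gamma(0)=\eta(0)=O$ kills the $(1-c)E\ds{\gamma(0),\eta(0)}$ term, so the only surviving error is the additive $D$, independent of $M$ and $t$.
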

 
We define a subset $\calL_O\subset \calL$ as the set of all $\gamma\in \calL$ with $\Dom \gamma=[0,a_\gamma]$, 
$a_\gamma \geq 2\theta(0)$, and $\gamma(0)= O$. Set $\bar{\calL}_O:= \calL_O \cup \calL_O^\infty$. 

For $x\in X\setminus \bar{B}(O,2\lambda\theta(0)+k)$, set
\begin{align*}
 \bar{\calL}_O(x):=\{\gamma\in \calL_O: \Dom\gamma = [0,a_\gamma], \, \gamma(a_\gamma)=x\},
\end{align*}
and for $x\in \partial_O X$, set
\begin{align*}
 \bar{\calL}_O(x):=\{\gamma\in \calL_O^\infty: \gamma \in x\}.
\end{align*}


\begin{definition}
\label{def:gromov-prod}
 We define a product $(\cdot\mid \cdot)_O\colon \bar{\calL}_O\times \bar{\calL}_O\to \Rp\cup \{\infty\}$ as 
 follows. For $\gamma,\eta\in \bar{\calL}_O$, 
 \begin{align*}
  (\gamma\mid \eta)_O:= \sup\{t\in \Dom\gamma \cap \Dom \eta:\ds{\gamma(t),\eta(t)}\leq 2D+2\}.
 \end{align*}
 We also define a product $(\cdot\mid \cdot)_O\colon (X\cup \partial_O X) \times (X\cup \partial_O X)\to \Rp\cup \{\infty\}$
 as follows.
 \begin{enumerate}[(1)]
  \item If $x\in \bar{B}(O,2\lambda\theta(0)+k))$ or $y\in \bar{B}(O,2\lambda\theta(0)+k))$,
	set
	\begin{align*}
	 (x\mid y)_O :=0.
	\end{align*}
  \item If $x,y\in (X\setminus \bar{B}(O,2\lambda\theta(0)+k))\cup \partial_O X$, set
	\begin{align*}
	 (x\mid y)_O:=\sup\{(\gamma\mid \eta)_O:\gamma\in \bar{\calL}_O(x),\, \eta\in \bar{\calL}_O(y)\}.
	\end{align*}
 \end{enumerate}
 When the choice of the base point is clear, we write $(x\mid y)$ instead of $(x\mid y)_O$.
\end{definition}

We quote some lemmas from \cite{FO-CCH} which we need later.

\begin{lemma}[{\cite[Lemma 4.14]{FO-CCH}}]
\label{lem:prod-approx}
 \label{lem:approx-seg-Gprod} 
 Let $\gamma\in \calL_O^\infty$ be a quasi-geodesic ray and let 
 $\{(\gamma_n,a_n)\}_{n}$ be an $\calL$-approximate sequence 
 for $\gamma$.  Then we have 
 $\liminf_{n\to \infty}(\gamma\mid \gamma_n)= \infty$.	
\end{lemma}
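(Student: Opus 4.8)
The plan is to show that the Gromov product $(\gamma \mid \gamma_n)_O$ tends to infinity by constructing, for each threshold $T$, a stage $N$ beyond which $\ds{\gamma(t),\gamma_n(t)} \leq 2D+2$ for all $t \leq T$. The key input is the coarsely convex inequality for $\bar{\calL}$ from Proposition~\ref{prop:qgeod-ray-convex}(\ref{qrayconvex}), together with Lemma~\ref{lem:t-ab}, applied to the pair $\gamma$ and $\gamma_n$, which both start at $O$.

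First I would fix $T > 0$. Since $\{(\gamma_n,a_n)\}_n$ is an $\calL$-approximate sequence for $\gamma$, we have $a_n \to \infty$, so for all large $n$ the domain $[0,a_n]$ contains $[0,T]$; also $\gamma_n \to \gamma$ pointwise on $\N$, so $\ds{\gamma(m),\gamma_n(m)} \to 0$ as $n\to\infty$ for each fixed integer $m$. Pick an integer $N_0 \geq T$. Then for $n$ large, $\ds{\gamma(N_0),\gamma_n(N_0)}$ is as small as we like, say $\leq 1$. Now apply Lemma~\ref{lem:t-ab} with $a = b = N_0$ (legitimate since $N_0 \leq a_n$ for large $n$, and $N_0 \in \Dom\gamma = \Rp$) and any $t \leq N_0$: this gives
\begin{align*}
 \ds{\gamma(t),\gamma_n(t)} \leq E\bigl(\ds{\gamma(N_0),\gamma_n(N_0)} + \lambda\tilde{\theta}(\ds{\gamma(N_0),\gamma_n(N_0)}) + k_1\bigr) + D.
\end{align*}
The right-hand side depends on $n$ only through $\ds{\gamma(N_0),\gamma_n(N_0)}$, which we have arranged to be $\leq 1$; since $\tilde\theta$ is non-decreasing, the whole bound is at most the constant $E(1 + \lambda\tilde\theta(1) + k_1) + D$ for all such $n$ and all $t\in[0,N_0]$.

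The remaining point is that this uniform bound is not literally $2D+2$, so a direct comparison with the definition of $(\gamma\mid\gamma_n)_O$ needs one more step. I would note instead that $\gamma$ is a quasi-geodesic ray and each $\gamma_n\in\calL$ is a quasi-geodesic segment, hence the restriction $\gamma_n|_{[0,N_0]}$ and (the relevant initial part of) $\gamma$ are both in $\bar\calL_O$, and one can either invoke that any two good quasi-geodesic rays from $O$ that stay within bounded Hausdorff distance are equivalent, forcing the relevant comparison below $2D+2$ via Lemma~\ref{lem:HdistD}-type reasoning, or — more directly — absorb the discrepancy by replacing the sup over $t$ with a slightly smaller sup and using the convexity inequality once more to contract the bound below $2D+2$ on a still-large interval. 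Either way, for each $T$ there is $N$ with $(\gamma\mid\gamma_n)_O \geq T$ for all $n \geq N$, which is exactly $\liminf_{n\to\infty}(\gamma\mid\gamma_n)_O = \infty$.

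The main obstacle is the bookkeeping in that last paragraph: the constant coming out of Lemma~\ref{lem:t-ab} is larger than the threshold $2D+2$ built into the Gromov product, so the argument must either contract it (using a larger $N_0$ and the factor $c$ in the coarsely convex inequality to gain a definite amount) or reroute through the equivalence criterion for rays from a common basepoint. I expect the cleanest route is to push $N_0$ far past $T$, bound $\ds{\gamma(N_0),\gamma_n(N_0)}$ by any prescribed $\epsilon$, and then apply Proposition~\ref{prop:qgeod-ray-convex}(\ref{qrayconvex}) with $c = T/N_0$ to conclude $\ds{\gamma(t),\gamma_n(t)} \leq 2D+2$ on $[0,T]$ for all large $n$; everything else is routine.
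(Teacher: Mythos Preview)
The paper does not supply its own proof of this lemma; it is quoted from \cite[Lemma~4.14]{FO-CCH} without argument, so there is nothing here to compare against directly.

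Your argument is correct, and the final route you sketch (take $N_0\gg T$, make $\ds{\gamma(N_0),\gamma_n(N_0)}\leq\epsilon$ by pointwise convergence, then contract via Proposition~\ref{prop:qgeod-ray-convex}(\ref{qrayconvex}) with $c=T/N_0$) does go through. But you are doing far more work than necessary, because you are aiming to bound $\ds{\gamma(t),\gamma_n(t)}$ on the \emph{entire} interval $[0,T]$. Look again at the definition
\[
(\gamma\mid\gamma_n)_O=\sup\bigl\{t\in\Dom\gamma\cap\Dom\gamma_n:\ds{\gamma(t),\gamma_n(t)}\leq 2D+2\bigr\}:
\]
this is a supremum over individual parameters $t$, not over initial intervals. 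So to get $(\gamma\mid\gamma_n)_O\geq T$ you only need to exhibit a \emph{single} $t\geq T$ in the common domain with $\ds{\gamma(t),\gamma_n(t)}\leq 2D+2$. Since $\gamma_n\to\gamma$ pointwise on $\N$ and $a_n\to\infty$, for any integer $m\geq T$ there is $N$ with $a_n\geq m$ and $\ds{\gamma(m),\gamma_n(m)}\leq 2D+2$ for all $n\geq N$, whence $(\gamma\mid\gamma_n)_O\geq m\geq T$. That is the whole proof; Lemma~\ref{lem:t-ab} and the contraction step are not needed at all.
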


\begin{lemma}
\label{lem:univ-const}
 There exists a constant $\DA\geq 1$ depending on 
 $\lambda,k,E,C,\theta(0)$ such that
 the following holds:
 \begin{enumerate}[$($i$)$]
  \item \label{lem:D3}
 For $x,y\in X\cup \partial_O X$ and for $\gamma\in \bar{\calL}_O(x)$ and $\eta\in \bar{\calL}_O(y)$, we have
   \begin{align*}
   (\gamma\mid \eta) \leq  (x\mid y) \leq \DA(\gamma\mid \eta).
  \end{align*}
  \item \label{lem:qultm}
 For triplet $\gamma,\eta,\xi\in \bar{\calL}_O$, we have
 \begin{align*}
  (\gamma\mid \xi) \geq \DA^{-1}\min
  \{(\gamma\mid \eta), (\eta\mid \xi)\}.
 \end{align*}
  \item \label{lem:qrayultmvisu} 	
	For triplet $x,y,z\in X\cup \partial_O X$, 
 we have 
 \begin{align*}
  (x\mid z) \geq \DA^{-1}\min \{(x\mid y), (y\mid z)\}.
 \end{align*}
  \item \label{lem:maximizer}
 Let $\gamma,\eta\in \bar{\calL}_O$. For all $t\in \Rp$ with 
	$t\leq (\gamma\mid \eta)$, we have
 \begin{align*}
  \ds{\gamma(t), \eta(t)} \leq \DA.
 \end{align*}
  \item \label{lem:ray-same-param}
 Let $\gamma,\eta\in \bar{\calL}_O$.
 If $\gamma(a) = \eta(b)$ for some $a\in \Dom \gamma$ and $b\in \Dom \eta$,
	then for all $t\in \Dom \gamma\cap \Dom \eta$ with
	$0\leq t \leq \max\{a,b\}$, 
	we have
 \begin{align*}
  \ds{\gamma(t),\eta(t)}\leq \DA.
 \end{align*} 
 \end{enumerate}
\end{lemma}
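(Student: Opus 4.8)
The plan is to establish the five assertions in the order (iv), (v), (ii), (i), (iii); each step produces a constant depending only on $\lambda,k,E,C$ and $\theta(0)$ (through $k_1=\lambda+k$, $D=2(1+E)k_1+C$, and $\tilde{\theta}(0)$), and at the end one takes $\DA$ to be the largest of these together with $1$. Throughout one uses that every element of $\bar{\calL}_O$ starts at $O$, so that for $\gamma,\eta\in\bar{\calL}_O$ the inequality of Proposition~\ref{prop:qgeod-ray-convex}(1) reads $\ds{\gamma(ct),\eta(cs)}\leq cE\ds{\gamma(t),\eta(s)}+D$.

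\emph{Assertions (iv), (v), (ii).} For (iv), given $\gamma,\eta\in\bar{\calL}_O$ and $0<t<(\gamma\mid\eta)$, pick $s$ with $t<s\leq(\gamma\mid\eta)$ and $\ds{\gamma(s),\eta(s)}\leq 2D+2$ and apply the above with $c=t/s$, getting $\ds{\gamma(t),\eta(t)}\leq E(2D+2)+D$; the cases $t=0$ and $t=(\gamma\mid\eta)$ (the supremum may fail to be attained, as quasi-geodesics need not be continuous) follow by letting $s\nearrow t$ and using $\ds{\gamma(t),\gamma(s)}\leq\lambda\abs{t-s}+k_1$. This gives (iv) with a constant $\DA_1$. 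For (v), from $\gamma(a)=\eta(b)$ and Proposition~\ref{prop:qgeod-ray-convex}(2) one gets $\abs{a-b}\leq\tilde{\theta}(0)$; for $0\leq t\leq\min\{a,b\}$ use Lemma~\ref{lem:t-ab} (with $\ds{\gamma(a),\eta(b)}=0$), and for $\min\{a,b\}\leq t\leq\max\{a,b\}$ inside the common domain note $\abs{t-a},\abs{t-b}\leq\abs{a-b}\leq\tilde{\theta}(0)$ and apply the quasi-geodesic bound together with the triangle inequality through $\gamma(a)=\eta(b)$; this yields a constant $\DA_2$. For (ii), put $m=\min\{(\gamma\mid\eta),(\eta\mid\xi)\}$; if $m=\infty$ the three rays are pairwise equivalent and $(\gamma\mid\xi)=\infty$, so assume $m<\infty$ (one checks $m$ lies in all the pertinent domains). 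By (iv), $\ds{\gamma(m),\xi(m)}\leq\ds{\gamma(m),\eta(m)}+\ds{\eta(m),\xi(m)}\leq 2\DA_1$, and applying Proposition~\ref{prop:qgeod-ray-convex}(1) to $\gamma,\xi$ with $c=(D+2)/(2E\DA_1)\leq 1$ gives $\ds{\gamma(cm),\xi(cm)}\leq 2D+2$, so $(\gamma\mid\xi)\geq cm$; hence (ii) holds with $\DA_3:=1/c$.

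\emph{Assertion (i).} The left inequality is immediate from Definition~\ref{def:gromov-prod}, since $(x\mid y)$ is a supremum over representatives. For the right inequality it suffices to prove a change-of-representative estimate: there is $\DA_4$ such that for all $x\in X\cup\partial_O X$, all $\gamma,\gamma'\in\bar{\calL}_O(x)$ and all $\zeta\in\bar{\calL}_O$, one has $\DA_4^{-1}(\gamma\mid\zeta)\leq(\gamma'\mid\zeta)\leq\DA_4(\gamma\mid\zeta)$; then applying the right half in each variable (the product being symmetric) gives $(\gamma'\mid\eta')\leq\DA_4^2(\gamma\mid\eta)$ for all competitors, so $(x\mid y)\leq\DA_4^2(\gamma\mid\eta)$. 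If $x\in\partial_O X$ then $\gamma\sim\gamma'$, so $\ds{\gamma(t),\gamma'(t)}\leq D$ for all $t$ by Lemma~\ref{lem:HdistD}; with (iv) this gives $\ds{\gamma'(t),\zeta(t)}\leq D+\DA_1$ for $t\leq(\gamma\mid\zeta)$, and a convexity pullback as in (ii) gives one inequality, the other following by symmetry. If $x\in X$, write $\gamma(a_\gamma)=x=\gamma'(a_{\gamma'})$: by (v), $\ds{\gamma(t),\gamma'(t)}\leq\DA_2$ on $[0,\min\{a_\gamma,a_{\gamma'}\}]$; by Proposition~\ref{prop:qgeod-ray-convex}(2), $\abs{a_\gamma-a_{\gamma'}}\leq\tilde{\theta}(0)$; the quasi-geodesic inequality makes $a_\gamma,a_{\gamma'}$ comparable to $\ds{O,x}$; and two quasi-geodesics out of $O$ are automatically $(2D+2)$-close on $[0,\delta_0)$ for $\delta_0:=(D+1-k_1)/\lambda>0$ (since $\ds{\gamma(t),\eta(t)}\leq 2(\lambda t+k_1)<2\lambda\delta_0+2k_1=2D+2$ there), so the products in play are bounded above by, and below by a universal multiple of, the minimum of the relevant domain lengths. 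Feeding these facts into the same convexity-pullback argument on the common domain, and converting the bounded additive discrepancies ($\leq\tilde{\theta}(0)$) into a multiplicative constant, yields the estimate. (If $x$ lies in the excluded ball then $\bar{\calL}_O(x)$ is undefined and there is nothing to prove.)

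\emph{Assertion (iii) and conclusion.} If one of $x,y,z$ lies in the excluded ball the inequality is trivially satisfied, since then at least two of $(x\mid y),(y\mid z),(x\mid z)$ vanish; otherwise fix $\gamma\in\bar{\calL}_O(x),\eta\in\bar{\calL}_O(y),\xi\in\bar{\calL}_O(z)$. By (i), $(x\mid y)\leq\DA_4^2(\gamma\mid\eta)$ and $(y\mid z)\leq\DA_4^2(\eta\mid\xi)$, while $(x\mid z)\geq(\gamma\mid\xi)$; by (ii), $(\gamma\mid\xi)\geq\DA_3^{-1}\min\{(\gamma\mid\eta),(\eta\mid\xi)\}\geq(\DA_3\DA_4^2)^{-1}\min\{(x\mid y),(y\mid z)\}$. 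Taking $\DA:=\max\{1,\DA_1,\DA_2,\DA_3,\DA_4^2,\DA_3\DA_4^2\}$ proves all five assertions. The step I expect to be the main obstacle is the $x\in X$ case of the change-of-representative estimate in (i): Lemma~\ref{lem:HdistD} is unavailable, the competing quasi-geodesics have finite and a priori unrelated domains, and the Gromov products involved can be small, so one must track how switching representatives perturbs the product and turn additive errors into multiplicative ones — which forces the domain-length comparison via parametrization regularity, the universal lower bound $\delta_0$ near $O$, and care over the non-attainment of the supremum in Definition~\ref{def:gromov-prod} when quasi-geodesics are discontinuous.
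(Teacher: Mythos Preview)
Your arguments for (iv) and (v) are essentially the paper's own: the paper handles (iv) by first bounding $\ds{\gamma(a),\eta(a)}$ at $a=(\gamma\mid\eta)$ via \cite[Lemma~4.7]{FO-CCH} and then applying convexity, and handles (v) by the same two-case split (Lemma~\ref{lem:t-ab} on $[0,\min\{a,b\}]$; triangle inequality through $\gamma(a)=\eta(b)$ beyond). For (i)--(iii) the paper simply cites \cite{FO-CCH}, so your direct proofs of (ii) and (iii) --- the triangle-inequality-plus-convexity pullback, and then (i)+(ii) --- are more than the paper supplies and are correct.

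The one place that is not yet a proof is exactly the spot you flag: the $x\in X$ case of the change-of-representative estimate in (i). Your sketch invokes the right ingredients but does not actually close the ``additive $\leadsto$ multiplicative'' gap. Two concrete points. First, since here $\gamma,\gamma'\in\calL_O\subset\calL$ are genuine segments, you may use the original regularity $\theta$ (Definition~\ref{def:cBNC}(iii)$^q$) rather than $\tilde\theta$, so in fact $\lvert a_\gamma-a_{\gamma'}\rvert\le\theta(0)$; this matters because it makes the troublesome case $(\gamma\mid\zeta)>\min\{a_\gamma,a_{\gamma'}\}$ vacuous when $\theta(0)=0$, and when $\theta(0)>0$ the built-in lower bound $a_{\gamma'}>2\theta(0)$ from the definition of $\calL_O$ immediately converts the additive error $\theta(0)$ into a multiplicative factor. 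Second, a cleaner packaging than your domain-length heuristics is to first show, via (v) and a single convexity pullback, that $(\gamma\mid\gamma')\ge c\cdot\min\{a_\gamma,a_{\gamma'}\}$ for a universal $c$, and then apply your already-proved (ii) to the triple $(\gamma',\gamma,\zeta)$: this reduces everything to the easy comparison of $(\gamma\mid\zeta)\le a_\gamma$ with $\min\{a_\gamma,a_{\gamma'}\}$, where the previous point handles the only nontrivial case. Your $\delta_0$ observation is correct but is not needed once you use $\theta(0)$ instead of $\tilde\theta(0)$.
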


\begin{proof}
(\ref{lem:D3}),(\ref{lem:qultm}) and (\ref{lem:qrayultmvisu}) 
 follow from \cite[Lemma 4.12]{FO-CCH}, \cite[Lemma 4.8]{FO-CCH}
 and \cite[Corollary 4.13]{FO-CCH}, respectively.

 First we give a proof of~(\ref{lem:maximizer}).
 Set $a:=(\gamma\mid \eta)$. If $a=\infty$, by Lemma~\ref{lem:HdistD},
 we have $\ds{\gamma(t),\eta(t)}\leq D$ for all $t\in \Rp$.
 We suppose $a<\infty$. By~\cite[Lemma 4.7]{FO-CCH}, we have
 $\ds{\gamma(a), \eta(a)} \leq D_1+2k_1$. 
 Then by Proposition~\ref{prop:qgeod-ray-convex},
 \begin{align*}
  \ds{\gamma(t),\eta(t)}\leq E(D_1+2k_1) + D \quad (\forall t\in [0,a]).
 \end{align*}
 
 Next we give a proof of~(\ref{lem:ray-same-param}). 
 Let $t\in \Dom \gamma\cap \Dom \eta$ with $0\leq t \leq \max\{a,b\}$.
 First we suppose $t\leq \min\{a,b\}$. Then by Lemma~\ref{lem:t-ab},
 $\ds{\gamma(t),\eta(t)}\leq E(\lambda \tilde{\theta}(0)+k_1)+D$.
 
 Now we suppose $\min\{a,b\}<t\leq \max\{a,b\}$. Since $\gamma(a)=\eta(b)$,
 we have  $\abs{a-b}\leq \tilde{\theta}(0)$.
 Then 
 \begin{align*}
 \ds{\gamma(t),\eta(t)}\leq \ds{\gamma(t),\gamma(a)} 
  + \ds{\gamma(a),\eta(b)}
  + \ds{\eta(b),\eta(t)}\leq 2(\lambda\tilde{\theta}(0) + k_1).
 \end{align*}
\end{proof}

\begin{lemma}
\label{lem:prod-est-bigon}
 For $\gamma,\gamma'\in \bar{\calL}_O$,  $t\in \Dom \gamma$ and 
 $t'\in \Dom\gamma'$, set $\alpha:= \ds{\gamma(t),\gamma'(t')}$.
 Then we have
 \begin{align*}
  (\gamma\mid \gamma')
  \geq \frac{\max\{t,t'\}-\tilde{\theta}(\alpha)}
  {E(\alpha + \lambda\tilde{\theta}(\alpha) + k_1)}.
 \end{align*}
\end{lemma}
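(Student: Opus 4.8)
The plan is to unwind the definition of $(\gamma\mid\gamma')_O$: it is the supremum of those $s\in\Dom\gamma\cap\Dom\gamma'$ with $\ds{\gamma(s),\gamma'(s)}\le 2D+2$, so it suffices to exhibit a single such parameter $s_0$ that is at least the claimed quantity. Since both sides of the asserted inequality are unchanged under interchanging $(\gamma,t)$ with $(\gamma',t')$, we may assume $t\ge t'$, so $\max\{t,t'\}=t$. As $\gamma,\gamma'\in\bar{\calL}_O$ both start at $O$, Proposition~\ref{prop:qgeod-ray-convex}(\ref{qrayparam-reg}) gives $\abs{t-t'}\le\tilde{\theta}(\alpha)$, hence $t'\ge t-\tilde{\theta}(\alpha)$; moreover if $t\le\tilde{\theta}(\alpha)$ the claimed lower bound is nonpositive while $(\gamma\mid\gamma')_O\ge 0$ (the parameter $s=0$ always qualifies), so we may further assume $t>\tilde{\theta}(\alpha)$, and in particular $t'>0$.

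The heart of the matter is to pull $\gamma$ and $\gamma'$ back toward $O$ by a scaling factor $c\in(0,1]$ to be chosen, i.e.\ to consider the points $\gamma(ct)$ and $\gamma'(ct')$. By Proposition~\ref{prop:qgeod-ray-convex}(\ref{qrayconvex}), using $\gamma(0)=\gamma'(0)=O$, we have $\ds{\gamma(ct),\gamma'(ct')}\le cE\alpha+D$; and since $\gamma$ is a $(\lambda,k_1)$-quasi-geodesic (because $\gamma\in\bar{\calL}_O$, recalling $k_1=\lambda+k\ge k$), $\ds{\gamma(ct),\gamma(ct')}\le\lambda c\abs{t-t'}+k_1\le\lambda c\,\tilde{\theta}(\alpha)+k_1$. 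Combining these with the triangle inequality,
\begin{align*}
 \ds{\gamma(ct'),\gamma'(ct')}\le c\bigl(E\alpha+\lambda\tilde{\theta}(\alpha)\bigr)+k_1+D.
\end{align*}
Choosing $c:=\min\{1,\,(D+2-k_1)/(E\alpha+\lambda\tilde{\theta}(\alpha))\}$ makes the right-hand side at most $2D+2$; here the denominator is positive since $\tilde{\theta}(\alpha)\ge\tilde{\theta}(0)\ge 1$, and $D+2-k_1\ge 1$ since $D=2(1+E)k_1+C$ and $k_1=\lambda+k\ge 1$, so indeed $c\in(0,1]$. Since $ct'\le t'\in\Dom\gamma'$ and $ct'\le t'\le t\in\Dom\gamma$, the parameter $ct'$ is admissible in the definition of $(\cdot\mid\cdot)_O$, whence $(\gamma\mid\gamma')_O\ge ct'$.

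It then remains to check the elementary bound $c\ge 1/(E(\alpha+\lambda\tilde{\theta}(\alpha)+k_1))$: if $c=1$ this holds because $E(\alpha+\lambda\tilde{\theta}(\alpha)+k_1)\ge Ek_1\ge 1$, and otherwise it follows from $D+2-k_1\ge 1$ together with $E\alpha+\lambda\tilde{\theta}(\alpha)\le E(\alpha+\lambda\tilde{\theta}(\alpha)+k_1)$. Using also $t'\ge t-\tilde{\theta}(\alpha)=\max\{t,t'\}-\tilde{\theta}(\alpha)$, we conclude
\begin{align*}
 (\gamma\mid\gamma')_O\ge ct'\ge\frac{t'}{E(\alpha+\lambda\tilde{\theta}(\alpha)+k_1)}\ge\frac{\max\{t,t'\}-\tilde{\theta}(\alpha)}{E(\alpha+\lambda\tilde{\theta}(\alpha)+k_1)}.
\end{align*}
I do not expect a genuine obstacle here; the points to get right are to apply the convexity inequality \emph{directly} to the endpoints $\gamma(t),\gamma'(t')$ (rather than first deriving a uniform bound on $\ds{\gamma(s),\gamma'(s)}$ via Lemma~\ref{lem:t-ab} and then compressing, which would cost an extra factor of $E$ in the denominator), to keep the factor $c$ multiplying $\lambda\tilde{\theta}(\alpha)$ so that the choice of $c$ does not force $\lambda\tilde{\theta}(\alpha)$ to be small, and to estimate $\ds{\gamma(ct'),\gamma'(ct')}$ rather than $\ds{\gamma(ct),\gamma'(ct)}$, since $ct$ may fall outside $\Dom\gamma'$ when $\gamma'$ is merely a quasi-geodesic segment.
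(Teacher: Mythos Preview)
Your proof is correct and follows essentially the same approach as the paper: bound $\lvert t-t'\rvert$ via parameter regularity, then combine the coarse convexity inequality with the quasi-geodesic estimate to produce an admissible parameter for $(\gamma\mid\gamma')_O$. The paper's version is slightly more streamlined in that it first passes to the common parameter $u:=\min\{t,t'\}$ and bounds $\ds{\gamma(u),\gamma'(u)}\le\alpha+\lambda\tilde{\theta}(\alpha)+k_1$, then applies convexity with $c:=\bigl(E(\alpha+\lambda\tilde{\theta}(\alpha)+k_1)\bigr)^{-1}$ directly, avoiding your intermediate choice of $c$ and the subsequent verification that $c\ge 1/(E(\alpha+\lambda\tilde{\theta}(\alpha)+k_1))$.
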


\begin{proof}
 By Proposition~\ref{prop:qgeod-ray-convex}
 we have
 \begin{align*}
  \abs{t-t'} \leq \tilde{\theta}(\ds{\gamma(0),\gamma'(0)}+\ds{\gamma(t),\gamma'(t')})
  =\tilde{\theta}(\alpha).
 \end{align*}
 Set $u:=\min\{t,t'\}$. We have 
 \begin{align*}
  \ds{\gamma(u),\gamma'(u)}\leq \ds{\gamma(t),\gamma'(t')} + 
  \lambda\abs{t-t'} + k_1
  \leq \alpha + \lambda\tilde{\theta}(\alpha) + k_1.
 \end{align*}
 Set $c:=(E(\alpha + \lambda\tilde{\theta}(\alpha) + k_1))^{-1}$.
 Then we have
 \begin{align*}
  \ds{\gamma(cu),\gamma'(cu)} \leq cE\ds{\gamma(u),\gamma'(u)} + D \leq D+1.
 \end{align*}
 Therefore
 \begin{align*}
  (\gamma\mid \gamma')\geq cu
  \geq \frac{\max\{t,t'\}-\tilde{\theta}(\alpha)}
  {E(\alpha + \lambda\tilde{\theta}(\alpha) + k_1)}.
 \end{align*}
\end{proof}

In the rest of this section, we suppose that $X$ is proper,
that is, all closed bounded subset is compact.
By a diagonal argument, we have the following.

\begin{lemma}[{\cite[Proposition 4.17]{FO-CCH}}]
\label{lem:Ascoli-Arzela}
 Let $\{v_n\}_{n}$ be a sequence in $X$ such that 
\begin{align*}
  \lim_{n\to \infty}\ds{O,v_n}=\infty.
\end{align*} 
Then there exists a $(\lambda,k_1)$-quasi-geodesic ray 
 $\gamma\in \calL_O^\infty$ starting at $O$, and a 
 sequence $(N_n)$ in $\N$
 such that $\liminf_{n\to \infty}(v_{N_n}\mid [\gamma])=\infty$.
\end{lemma}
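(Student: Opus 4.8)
The plan is to build the ray $\gamma$ by an Ascoli--Arzel\`a diagonal argument and then read off the asymptotic lower bound for the Gromov products from Lemma~\ref{lem:prod-approx} and Lemma~\ref{lem:univ-const}.

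First I would fix, for every $n$, a good quasi-geodesic segment $\gamma_n\in\calL$ with $\Dom\gamma_n=[0,a_n]$, $\gamma_n(0)=O$ and $\gamma_n(a_n)=v_n$, using property~(\ref{qconn})${^q}$ of Definition~\ref{def:cBNC}. Since $\gamma_n$ is a $(\lambda,k)$-quasi-geodesic, $\ds{O,v_n}\le\lambda a_n+k$, hence $a_n\to\infty$; moreover, for each $m\in\N$ and each $n$ with $a_n\ge m$ one has $\ds{O,\gamma_n(m)}\le\lambda m+k$, so the points $\gamma_n(m)$ all lie in the compact ball $\bar{B}(O,\lambda m+k)$. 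A Cantor diagonal argument then yields a subsequence $(N_n)$ along which $\gamma_{N_n}(m)$ converges for every $m\in\N$.

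Next I would set $\gamma(t):=\lim_{n\to\infty}\gamma_{N_n}(\lfloor t\rfloor)$ for $t\in\Rp$. By construction $\gamma(t)=\gamma(\lfloor t\rfloor)$, $\gamma(0)=O$, $a_{N_n}\to\infty$, $\gamma_{N_n}(0)=\gamma(0)$, and $\{\gamma_{N_n}\}_n$ converges to $\gamma$ pointwise on $\N$; thus $\{(\gamma_{N_n},a_{N_n})\}_n$ is an $\calL$-approximate sequence for $\gamma$, so $\gamma$ is $\calL$-approximatable and lies in $\calL_O^\infty$. By the remark preceding Lemma~\ref{lem:prod-approx} (that is, \cite[Lemma~4.1]{FO-CCH}), $\gamma$ is then a $(\lambda,k_1)$-quasi-geodesic ray starting at $O$.

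It remains to bound $(v_{N_n}\mid[\gamma])$ from below. By Lemma~\ref{lem:prod-approx} applied to the $\calL$-approximate sequence just constructed, $\liminf_{n\to\infty}(\gamma\mid\gamma_{N_n})=\infty$. For all sufficiently large $n$ we have $\ds{O,v_{N_n}}>2\lambda\theta(0)+k$ and $a_{N_n}\ge2\theta(0)$, so $\gamma_{N_n}\in\bar{\calL}_O(v_{N_n})$, while $\gamma\in\bar{\calL}_O([\gamma])$; then Lemma~\ref{lem:univ-const}~(\ref{lem:D3}) gives $(v_{N_n}\mid[\gamma])\ge(\gamma_{N_n}\mid\gamma)=(\gamma\mid\gamma_{N_n})$, the last equality being the symmetry of the product in Definition~\ref{def:gromov-prod}. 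Taking $\liminf$ over $n$ gives $\liminf_{n\to\infty}(v_{N_n}\mid[\gamma])=\infty$, as asserted. I expect no serious obstacle: the only points requiring care are verifying $a_n\to\infty$ (needed both for the $\calL$-approximate sequence condition and so that $\gamma_{N_n}$ eventually belongs to $\bar{\calL}_O(v_{N_n})$) and keeping track of the diagonal extraction, since the substantive facts about Gromov products are already packaged in Lemmas~\ref{lem:prod-approx} and~\ref{lem:univ-const}.
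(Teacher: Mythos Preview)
Your proof is correct and matches the paper's approach: the paper does not give a detailed proof but simply cites \cite[Proposition~4.17]{FO-CCH} and indicates ``By a diagonal argument,'' which is exactly the Ascoli--Arzel\`a extraction you carry out, followed by the appeal to Lemmas~\ref{lem:prod-approx} and~\ref{lem:univ-const}~(\ref{lem:D3}) to pass from $(\gamma\mid\gamma_{N_n})$ to $(v_{N_n}\mid[\gamma])$.
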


\begin{corollary}
 \label{cor:As-Arz}
 Let $(x_n)_n$ be a sequence in $X$ such that $\lim_{n}\ds{O,x_n}= \infty$. Then there exists a subsequence
 $(x_{N_n})_n$ such that $\lim_{k,l}(x_{N_k}\mid x_{N_l})=\infty$.
\end{corollary}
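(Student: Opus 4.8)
The plan is to deduce this directly from Lemma~\ref{lem:Ascoli-Arzela} together with the quasi-ultrametric inequality for the Gromov-type product. The idea is that the ray produced by the Arzel\`a--Ascoli type argument serves as a common ``center'' to which all the selected points are close (in the sense of the product $(\cdot\mid\cdot)_O$), and then the quasi-ultrametric inequality propagates this into a uniform lower bound on the pairwise products.

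First I would apply Lemma~\ref{lem:Ascoli-Arzela} to the given sequence $(x_n)_n$, which satisfies $\ds{O,x_n}\to\infty$ by hypothesis. This yields a quasi-geodesic ray $\gamma\in\calL_O^\infty$ and a (strictly increasing, after passing to a further subsequence if necessary) sequence $(N_n)$ in $\N$ with $\liminf_{n\to\infty}(x_{N_n}\mid[\gamma])=\infty$; write $\xi:=[\gamma]\in\partial_O X$. So $(x_{N_n})_n$ is a genuine subsequence of $(x_n)_n$, and it is the one we will keep.

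Next, for any two indices $k,l$ I would apply Lemma~\ref{lem:univ-const}(\ref{lem:qrayultmvisu}) to the triple $x_{N_k},\xi,x_{N_l}\in X\cup\partial_O X$, obtaining
\[
(x_{N_k}\mid x_{N_l})\ \geq\ \DA^{-1}\min\bigl\{(x_{N_k}\mid\xi),\,(\xi\mid x_{N_l})\bigr\}.
\]
Since the product $(\cdot\mid\cdot)_O$ is symmetric and $\liminf_{n\to\infty}(x_{N_n}\mid\xi)=\infty$, given $M>0$ I can choose $n_0$ with $(x_{N_n}\mid\xi)\geq\DA M$ for every $n\geq n_0$; then for all $k,l\geq n_0$ the inequality above gives $(x_{N_k}\mid x_{N_l})\geq M$. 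As $M$ was arbitrary, this is precisely $\lim_{k,l}(x_{N_k}\mid x_{N_l})=\infty$.

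I do not expect any genuine obstacle here: the argument is a short combination of two already-established facts. The only points needing a word of care are that the index sequence coming out of Lemma~\ref{lem:Ascoli-Arzela} may be taken strictly increasing, so that $(x_{N_n})_n$ really is a subsequence, and that $(\cdot\mid\cdot)_O$ is symmetric on $X\cup\partial_O X$ so that the roles of the two endpoints in the quasi-ultrametric inequality may be interchanged; both are immediate from the definitions in Section~\ref{sec:coars-conv-space}.
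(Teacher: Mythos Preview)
Your proof is correct and follows the same route as the paper's own argument: apply Lemma~\ref{lem:Ascoli-Arzela} to obtain the ray $\gamma$ and the indices $(N_n)$, then use the quasi-ultrametric inequality of Lemma~\ref{lem:univ-const}(\ref{lem:qrayultmvisu}) with $[\gamma]$ as the middle point to conclude. The additional remarks you make (arranging $(N_n)$ strictly increasing, symmetry of the product) are harmless technical checks that the paper leaves implicit.
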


\begin{proof}
 By Lemma~\ref{lem:Ascoli-Arzela}, there exists $\gamma\in \calL_O^\infty$ and
 sequence $(N_n)$ in $\N$ such that $\liminf_{n\to \infty}(x_{N_n}\mid [\gamma])=\infty$. 
 So by Lemma~\ref{lem:univ-const} (\ref{lem:qrayultmvisu}), we have.
 \begin{align*}
  (x_{N_k}\mid x_{N_l})\geq \DA^{-1}\min\{(x_{N_k}\mid [\gamma]), ([\gamma]\mid x_{N_l})\}\to \infty.
 \end{align*}
\end{proof}

By \cite[Corollary 4.21]{FO-CCH}, the inclusion 
$\calL_O^\infty \hookrightarrow \calL^\infty$ 
induces the bijection $\partial_O X\rightarrow \partial X$. 
Thus we identify $\partial_O X$ with $\partial X$.

\subsection{Topology on the boundary}
We define a topology on $X\cup \partial_O X$ as follows. For $n\in \N$, set
\begin{align*}
 V_n:=\{(x, y)\in (X\cup \partial_O X)^2:(x\mid y)>n\}\cup \{(x,y)\in X^2:\ds{x,y}<n^{-1}\}.
\end{align*}
The family $\{V_n\}_{n\in\N}$ forms an uniform structure on $X\cup \partial_O X$, which is metrizable.
For $x\in X\cup \partial_O X$, set $V_n(x):=\{y\in X\cup \partial_O X:(x,y)\in V_n\}$. 
Then $\{V_n(x)\}_n$ is a fundamental neighbourhood of
$x$ and the inclusion $X\hookrightarrow X\cup \partial_O X$ is a topological embedding.
Lemma~\ref{lem:Ascoli-Arzela} implies the following.
\begin{proposition}[{\cite[Proposition 4.18]{FO-CCH}}]
 Let $X$ be a coarsely convex space. If $X$ is proper, then $X\cup \partial_O X$ is compact.
\end{proposition}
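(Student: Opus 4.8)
The plan is to reduce compactness to sequential compactness, using that the uniform structure $\{V_n\}$ is metrizable, and then to extract a convergent subsequence from an arbitrary sequence $(p_n)_n$ in $X\cup\partial_O X$ by feeding a suitable auxiliary sequence into the Ascoli--Arzelà type result, Lemma~\ref{lem:Ascoli-Arzela}.

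First I would dispose of an easy case. If for some $R>0$ infinitely many of the $p_n$ lie in the closed ball $\bar{B}(O,R)\subset X$, then by properness this ball is compact, and since $X\hookrightarrow X\cup\partial_O X$ is a topological embedding the corresponding subsequence has a further subsequence converging to a point of $X$. So we may assume that for every $R$ only finitely many $p_n$ lie in $\bar{B}(O,R)$; passing to a subsequence, we may assume that each $p_n$ either lies in $\partial_O X$, or lies in $X$ with $\ds{O,p_n}\to\infty$.

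The key step is to attach to each $p_n$ a point $v_n\in X$ with $\ds{O,v_n}\to\infty$ and $(p_n\mid v_n)_O\to\infty$, so that Lemma~\ref{lem:Ascoli-Arzela} (which only accepts sequences in $X$) applies. If $p_n\in X$, take $v_n:=p_n$. If $p_n=[\gamma_n]\in\partial_O X$, choose a representative ray $\gamma_n\in\calL_O^\infty$ and an $\calL$-approximate sequence $\{(\gamma_{n,k},a_{n,k})\}_k$ for it; for $k$ large the segment $\gamma_{n,k}$ lies in $\calL_O$, its endpoint is arbitrarily far from $O$, and by Lemma~\ref{lem:prod-approx} the product $(\gamma_n\mid\gamma_{n,k})_O$ is arbitrarily large. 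Choosing $k=k(n)$ appropriately and setting $v_n:=\gamma_{n,k(n)}(a_{n,k(n)})$ gives $\ds{O,v_n}\to\infty$, and, by Lemma~\ref{lem:univ-const}(\ref{lem:D3}) applied with $\gamma_n\in\bar{\calL}_O(p_n)$ and $\gamma_{n,k(n)}\in\bar{\calL}_O(v_n)$, also $(p_n\mid v_n)_O\geq(\gamma_n\mid\gamma_{n,k(n)})_O\to\infty$. (Alternatively one may take $v_n:=\gamma_n(m_n)$ with $m_n$ large and bound $(p_n\mid v_n)_O$ directly via Lemma~\ref{lem:prod-est-bigon} with $\alpha=0$.)

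Finally I would apply Lemma~\ref{lem:Ascoli-Arzela} to $(v_n)_n$ to obtain $\gamma\in\calL_O^\infty$ and a subsequence $(v_{N_n})_n$ with $(v_{N_n}\mid[\gamma])_O\to\infty$, and then claim $p_{N_n}\to[\gamma]$ in $X\cup\partial_O X$. Since $[\gamma]\notin X$, the basic neighbourhoods of $[\gamma]$ are $V_m([\gamma])=\{q:(q\mid[\gamma])_O>m\}$, so it suffices to show $(p_{N_n}\mid[\gamma])_O\to\infty$. When $p_{N_n}\in X$ this is immediate since $p_{N_n}=v_{N_n}$; when $p_{N_n}\in\partial_O X$, the coarse ultrametric inequality Lemma~\ref{lem:univ-const}(\ref{lem:qrayultmvisu}) gives
\[
(p_{N_n}\mid[\gamma])_O\ \geq\ \DA^{-1}\min\{(p_{N_n}\mid v_{N_n})_O,\,(v_{N_n}\mid[\gamma])_O\}\ \to\ \infty .
\]
This yields sequential, hence topological, compactness. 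The one place where actual work is needed is the third paragraph: manufacturing the auxiliary points $v_n$ for boundary points and then transporting the conclusion back to $p_{N_n}$ through the constant $\DA$; everything else is bookkeeping with the definition of $\{V_n\}$ and properness.
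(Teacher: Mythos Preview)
Your proof is correct and follows exactly the approach the paper indicates: the paper merely asserts that Lemma~\ref{lem:Ascoli-Arzela} implies the result (the full argument being in \cite{FO-CCH}), and you have supplied the natural details---reducing to sequential compactness via metrizability, disposing of the bounded case by properness, manufacturing auxiliary points $v_n\in X$ so that Lemma~\ref{lem:Ascoli-Arzela} applies, and transporting the conclusion back to the original sequence through Lemma~\ref{lem:univ-const}(\ref{lem:qrayultmvisu}).
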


\subsection{Metric on the boundary}
\begin{proposition}
\label{prop:q-met2met}
 For sufficiently small $\epsilon>0$, 
 there exists a metric $\dbdl{\epsilon}{\partial X}$ on $\partial X$ and a constant $0<K<1$ 
 such that
\begin{align*}
  \frac{1}{K}(x\mid y)^{-\epsilon} \leq 
 \dbdl{\epsilon}{\partial X}(x,y)\leq (x\mid y)^{-\epsilon}
\end{align*} 
for all $x,y\in \partial_O X$.
\end{proposition}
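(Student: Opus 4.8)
The plan is to show that the function $\rho(x,y) := (x\mid y)^{-\epsilon}$ on $\partial_O X$ is a ``quasi-metric'' in the sense that it is symmetric, vanishes exactly on the diagonal, and satisfies a relaxed triangle inequality $\rho(x,z) \le K'(\rho(x,y) \vee \rho(y,z))$ for a constant $K'$ that can be driven below $2$ by shrinking $\epsilon$; one then invokes the standard chain-construction (Frink's metrization lemma, as used by Gromov--Ghys--de la Harpe for hyperbolic boundaries) to produce a genuine metric $\dbdl{\epsilon}{\partial X}$ comparable to $\rho$ up to a multiplicative constant $1/K$.

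First I would record the elementary properties of the Gromov-type product $(\cdot\mid\cdot)_O$ restricted to $\partial_O X \times \partial_O X$. Symmetry is immediate from Definition~\ref{def:gromov-prod}. For the ``positivity'' one needs that $(x\mid y) = \infty$ if and only if $x = y$ in $\partial_O X$: one direction is Lemma~\ref{lem:HdistD} (if $\gamma\sim\eta$ then $\ds{\gamma(t),\eta(t)}\le D \le 2D+2$ for all $t$, so the sup defining $(\gamma\mid\eta)_O$ is infinite, whence $(x\mid y)=\infty$ by Lemma~\ref{lem:univ-const}(\ref{lem:D3})); conversely if $(x\mid y)=\infty$ then by Lemma~\ref{lem:univ-const}(\ref{lem:maximizer}) $\ds{\gamma(t),\eta(t)}\le\DA$ for all $t$ and all $\gamma\in\bar\calL_O(x)$, $\eta\in\bar\calL_O(y)$, so $\gamma\sim\eta$ and $x=y$. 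Hence setting $\rho(x,y)=(x\mid y)^{-\epsilon}$ (with the convention $\infty^{-\epsilon}=0$) gives a well-defined nonnegative symmetric function vanishing precisely on the diagonal.

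Next comes the key estimate: the ultrametric-type inequality of Lemma~\ref{lem:univ-const}(\ref{lem:qrayultmvisu}), namely $(x\mid z)\ge\DA^{-1}\min\{(x\mid y),(y\mid z)\}$, translates after raising to the power $-\epsilon$ into
\begin{align*}
 \rho(x,z) \le \DA^{\epsilon}\,\max\{\rho(x,y),\rho(y,z)\}.
\end{align*}
Choosing $\epsilon>0$ small enough that $\DA^{\epsilon}<2$ (possible since $\DA\ge 1$ is a fixed constant), one is in the setting of Frink's lemma: a symmetric function $\rho\ge 0$ on a set, zero exactly on the diagonal, satisfying $\rho(x,z)\le K'\max\{\rho(x,y),\rho(y,z)\}$ with $1\le K'<2$, admits a metric $\dbdl{\epsilon}{\partial X}$ with $\tfrac{1}{2K'^2}\rho \le \dbdl{\epsilon}{\partial X}\le\rho$ obtained by $\dbdl{\epsilon}{\partial X}(x,y)=\inf\sum_{i}\rho(x_i,x_{i+1})$ over finite chains from $x$ to $y$. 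Taking $K := 2\DA^{2\epsilon}$ (or rather its reciprocal in the displayed bound) yields exactly the asserted two-sided comparison. I would also note that $\dbdl{\epsilon}{\partial X}$ is then a genuine metric (triangle inequality is automatic from the chain infimum; it separates points because $\dbdl{\epsilon}{\partial X}\ge\tfrac{1}{2K'^2}\rho>0$ off the diagonal).

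The main obstacle is purely the verification of Frink's chain lemma with explicit constants — the combinatorial induction on chain length showing $\sum_i\rho(x_i,x_{i+1})\ge\tfrac{1}{2K'^2}\rho(x_0,x_n)$ — but this is entirely standard and I would simply cite it (e.g.\ from Ghys--de la Harpe or Bourdon), rather than reproduce it. A minor secondary point to be careful about: the product $(\cdot\mid\cdot)_O$ on $\partial_O X$ is defined via a supremum over representing quasi-geodesic rays and over a cutoff level $2D+2$, so one must make sure the constant $\DA$ and the equivalence between the ray-product and the point-product (Lemma~\ref{lem:univ-const}(\ref{lem:D3})) are used consistently when passing from rays to boundary points; this is handled exactly by the quoted parts of Lemma~\ref{lem:univ-const} and needs no new argument.
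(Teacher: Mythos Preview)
Your proposal is correct and is precisely the standard chain-construction argument. The paper does not give its own proof of this proposition but simply refers to \cite[Section 4.5]{FO-CCH}, where exactly this Frink/Ghys--de la Harpe approach is carried out: one uses the quasi-ultrametric inequality (Lemma~\ref{lem:univ-const}(\ref{lem:qrayultmvisu})) to get $\rho(x,z)\le\DA^{\epsilon}\max\{\rho(x,y),\rho(y,z)\}$ for $\rho=(x\mid y)^{-\epsilon}$, shrinks $\epsilon$ so that $\DA^{\epsilon}<2$, and takes the chain infimum to obtain the metric.
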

We refer \cite[Section 4.5]{FO-CCH} for details.

\subsection{Sequential boundary}

Let $(x_n)_n$ be a sequence in $X$. 
We say that $(x_n)_{n}$ tends to infinity if 
\begin{align*}
 \lim_{n,m\to \infty}(x_n\mid x_m)=\infty.
\end{align*}


\begin{lemma}
\label{lem:tendstoinfty}
 Suppose that $X$ is proper. 
 Then for a sequence $(x_n)_n$ in $X$ tending to infinity, 
 there exists $x\in \partial X$ such that $x_n\to x$. 
\end{lemma}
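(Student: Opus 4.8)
The plan is to construct the limit point as the equivalence class of a good quasi-geodesic ray obtained by the Ascoli--Arzelà type argument, then verify that the sequence actually converges to it in the topology on $X \cup \partial_O X$. Since $(x_n)_n$ tends to infinity, in particular $(x_n \mid x_m) \to \infty$, so taking $m$ fixed and letting $n \to \infty$ along any subsequence forces $\ds{O,x_n} \to \infty$: indeed if $\ds{O,x_{n_j}}$ stayed bounded, then $(x_{n_j} \mid x_m)$ would be bounded (a bounded point has bounded Gromov product with everything, by Definition~\ref{def:gromov-prod}(1) once it enters $\bar{B}(O,2\lambda\theta(0)+k)$, and more generally by Lemma~\ref{lem:prod-est-bigon} the product is controlled by the distance), contradicting the hypothesis. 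So Corollary~\ref{cor:As-Arz} applies: there is a subsequence $(x_{N_n})_n$ and, via Lemma~\ref{lem:Ascoli-Arzela}, a quasi-geodesic ray $\gamma \in \calL_O^\infty$ with $\liminf_{n\to\infty}(x_{N_n} \mid [\gamma]) = \infty$. Set $x := [\gamma] \in \partial_O X = \partial X$.

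Next I would promote convergence along the subsequence to convergence of the whole sequence. The key tool is the quasi-ultrametric inequality Lemma~\ref{lem:univ-const}(\ref{lem:qrayultmvisu}): for any $n$ and any $k$,
\begin{align*}
 (x_n \mid x) \geq \DA^{-1}\min\{(x_n \mid x_{N_k}),\ (x_{N_k} \mid x)\}.
\end{align*}
Given $M > 0$, choose $k$ large enough that $(x_{N_k} \mid x) > \DA M$ (possible since $\liminf_k (x_{N_k}\mid[\gamma]) = \infty$), and fix such a $k$; then choose $n_0$ so that $(x_n \mid x_{N_k}) > \DA M$ for all $n \geq n_0$, which is possible because $(x_n)_n$ tends to infinity and $N_k$ is one of its indices. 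For $n \geq n_0$ we get $(x_n \mid x) > M$. Since $M$ was arbitrary, $(x_n \mid x) \to \infty$.

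Finally I would translate $(x_n \mid x) \to \infty$ into convergence in the topology on $X \cup \partial_O X$. By the definition of the uniform structure $\{V_n\}$, the sets $V_m(x) = \{y : (x\mid y) > m\}$ (for $x \in \partial_O X$, the distance alternative is vacuous) form a fundamental system of neighbourhoods of $x$. Given $m$, pick $n_0$ with $(x_n \mid x) > m$ for $n \geq n_0$; then $x_n \in V_m(x)$ for all $n \geq n_0$. Hence $x_n \to x$ in $X \cup \partial_O X$, which is exactly the assertion. I expect the main obstacle to be the first step — cleanly extracting from "tends to infinity" that $\ds{O,x_n}\to\infty$ (so that Corollary~\ref{cor:As-Arz} and Lemma~\ref{lem:Ascoli-Arzela} are applicable), since one must rule out bounded subsequences using the interplay between Definition~\ref{def:gromov-prod} and Lemma~\ref{lem:prod-est-bigon}; the rest is a routine chase through the quasi-ultrametric inequality and the definition of the boundary topology.
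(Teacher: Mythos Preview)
Your proposal is correct and follows essentially the same route as the paper: apply Lemma~\ref{lem:Ascoli-Arzela} to get a subsequential limit $x=[\gamma]\in\partial X$, then use the quasi-ultrametric inequality Lemma~\ref{lem:univ-const}(\ref{lem:qrayultmvisu}) with the hypothesis $(x_n\mid x_m)\to\infty$ to upgrade subsequential convergence to full convergence. The paper's proof is terser---it applies Lemma~\ref{lem:Ascoli-Arzela} without explicitly checking $\ds{O,x_n}\to\infty$ and leaves the translation into the boundary topology implicit---so your extra care on both ends is appropriate and the ``obstacle'' you anticipate is not serious (the bound $(x_n\mid x_n)\leq \DA a_{\gamma_n}\leq \DA\lambda(\ds{O,x_n}+k)$ from Lemma~\ref{lem:univ-const}(\ref{lem:D3}) does the job immediately).
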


\begin{proof}
 By Lemma~\ref{lem:Ascoli-Arzela}, there exists $\gamma\in \calL_O$ and 
 $(N_n)_n\subset \N$ such that 
 $\lim_{n\to \infty}(x_{N_n}\mid [\gamma]) = \infty$. 
 So set $x:=[\gamma]$ and we have $x_{N_n}\to x$. Since for 
 all $m,n\in \N$
 \begin{align*}
  (x_m\mid x)\geq \DA^{-1}\min\{(x_m\mid x_{N_n}), \, (x_{N_n}\mid x)\},
 \end{align*}
 we have $(x_m\mid x)\to \infty$. 
\end{proof}

\begin{lemma}
\label{lem:associate}
 For $\gamma\in \calL_O^\infty$ and a sequence 
 $(x_n)_{n}\subset X$ satisfying 
 \begin{align*}
  \limsup_{n\rightarrow \infty}\ds{x_n,\Im \gamma} <\infty,
 \end{align*} 
 we have $\lim(x_n\mid [\gamma]) = \infty$.
\end{lemma}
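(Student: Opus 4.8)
The plan is to bound the Gromov product $(x_n\mid[\gamma])_O$ from below by a single good quasi-geodesic bigon from $O$ to $x_n$, one side of which is the ray $\gamma$ itself. We may and do assume $\ds{O,x_n}\to\infty$; the asserted conclusion forces this, so nothing is lost.

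\emph{A point of $\gamma$ near $x_n$.} Since $\gamma(t)=\gamma(\lfloor t\rfloor)$, the image $\Im\gamma$ is the discrete set $\{\gamma(m):m\in\N\}$, so with $M:=1+\limsup_n\ds{x_n,\Im\gamma}<\infty$ we may choose, for all large $n$, an integer $m_n$ with $\ds{x_n,\gamma(m_n)}\le M$. Then $\ds{O,\gamma(m_n)}\ge\ds{O,x_n}-M\to\infty$, and since $\gamma$ is a $(\lambda,k_1)$-quasi-geodesic ray starting at $O$ we have $\ds{O,\gamma(m_n)}\le\lambda m_n+k_1$; hence $m_n\to\infty$. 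In particular, for $n$ large, $x_n$ and $\gamma(m_n)$ both lie outside $\bar{B}(O,2\lambda\theta(0)+k)$.

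\emph{The bigon estimate.} By Definition~\ref{def:cBNC}(\ref{qconn})$^q$, for each large $n$ there is $\gamma_n'\in\calL$ with $\gamma_n'(0)=O$, $\Dom\gamma_n'=[0,a_n]$ and $\gamma_n'(a_n)=x_n$; as $\gamma_n'$ is a $(\lambda,k)$-quasi-geodesic and $\ds{O,x_n}$ is large, $a_n\ge 2\theta(0)$, so $\gamma_n'\in\calL_O$ and thus $\gamma_n'\in\bar{\calL}_O(x_n)$. Apply Lemma~\ref{lem:prod-est-bigon} to $\gamma_n'$ and $\gamma$ (both in $\bar{\calL}_O$) at the parameters $a_n\in\Dom\gamma_n'$ and $m_n\in\Dom\gamma$: with $\alpha:=\ds{x_n,\gamma(m_n)}\le M$ and $\tilde{\theta}$ non-decreasing,
\[
 (\gamma_n'\mid\gamma)_O\ \ge\ \frac{\max\{a_n,m_n\}-\tilde{\theta}(\alpha)}{E\bigl(\alpha+\lambda\tilde{\theta}(\alpha)+k_1\bigr)}\ \ge\ \frac{m_n-\tilde{\theta}(M)}{E\bigl(M+\lambda\tilde{\theta}(M)+k_1\bigr)},
\]
and the final expression tends to $\infty$ since $M$ is a fixed constant and $m_n\to\infty$. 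Finally $\gamma\in\calL_O^\infty$ and $\gamma\in[\gamma]$, so $\gamma\in\bar{\calL}_O([\gamma])$, and Lemma~\ref{lem:univ-const}(\ref{lem:D3}) yields $(x_n\mid[\gamma])_O\ge(\gamma_n'\mid\gamma)_O$. Letting $n\to\infty$ completes the proof.

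\emph{Where the difficulty is.} Once the setup is in place the estimate is a one-line application of Lemma~\ref{lem:prod-est-bigon} together with Lemma~\ref{lem:univ-const}(\ref{lem:D3}); the only points needing care are that the nearest-image parameters $m_n$ diverge — this is what drives the numerator of the bigon estimate to infinity and is the real content — and that the connecting segments $\gamma_n'$ belong to the based family $\calL_O$ rather than merely to $\calL$, so that the Gromov products above are the ones occurring in Definition~\ref{def:gromov-prod}. Both reduce to $\ds{O,x_n}\to\infty$.
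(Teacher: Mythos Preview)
Your proof is correct and follows essentially the same route as the paper's: choose a good segment $\gamma_n'\in\calL_O$ from $O$ to $x_n$, a nearby point $\gamma(m_n)$ on the ray, and apply Lemma~\ref{lem:prod-est-bigon}. The paper uses $a_n$ in the numerator where you use $m_n$, but since the lemma gives $\max\{a_n,m_n\}$ either choice works; you are slightly more careful in verifying $\gamma_n'\in\calL_O$ and in explicitly flagging the needed hypothesis $\ds{O,x_n}\to\infty$, which the paper also assumes (``since $a_n\to\infty$'') without comment.
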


\begin{proof}
 Set $S:=\limsup_{n\rightarrow \infty}\ds{x_n,\Im \gamma} <\infty$. 
 There exists $N$
 such that for all $n>N$, $\ds{x_n,\Im \gamma} \leq S$.
 We choose $\gamma_n\in \calL_O$ joining $O$ and $x_n$. 
 Set $[0,a_n]= \Dom \gamma_n$. For each $n>N$, there exists 
 $t_n\in \Rp$ such that $\ds{\gamma_n(a_n),\gamma(t_n)}<S+1$. Thus
 By Lemma~\ref{lem:prod-est-bigon}, we have
 \begin{align*}
  (x_n\mid [\gamma]) \geq (\gamma_n\mid \gamma)\geq 
   \frac{a_n-\tilde{\theta}(S+1)}
  {E(S+1 + \lambda\tilde{\theta}(S+1) + k_1)}.
 \end{align*}
 Since $a_n\to \infty$, we have $(x_n\mid [\gamma]) \to \infty.$
\end{proof}



\section{Visual map and radial map}
\label{sec:visual-map-radial}
Throughout this section, let $X$ be a proper 
$(\lambda, k, E, C,\theta, \xcalL{X})$-coarsely convex space, and 
let $Y$ be a proper
$(\lambda', k', E', C',\theta', \xcalL{Y})$-coarsely convex space.
Let $a\in X$ and $b\in Y$ be base points of $X$ and $Y$, respectively.
Moreover, let $\DA$ be a constant satisfying the statements of 
Lemma~\ref{lem:univ-const} for both $X$ and $Y$.

\subsection{Visual map}

\begin{definition}
 We say that a large scale Lipschitz map $f\colon X\to Y$ is \textit{visual}
 if for every pair of sequences $(x_n)_{n}, (y_n)_{n}$ in $X$ with 
 $(x_n\mid y_n)_{a} \to \infty$, we have $(f(x_n)\mid f(y_n))_b\to \infty$.
\end{definition}



\begin{lemma}
\label{lem:visualmap-r-s}
 Let $f\colon X\to Y$ be a large scale Lipschitz map. $f$ is visual if 
 and only if for all $r>0$ there exists $s>0$ such that for $x,y\in X$ with
 $(x\mid y)_a> s$, we have $(f(x)\mid f(y))_b>r$.
\end{lemma}

\begin{proof}
 Suppose that there exists $r>0$ such that for all $s>0$, there exists
 $x,y\in X$ satisfying  $(x\mid y)_a>s$ and $(f(x)\mid f(y))_b\leq r$.
 Then we can find sequences $(x_n)_{n\in \N}$ and $(y_n)_{n\in \N}$ such
 that
 \begin{align*}
  (x_n\mid y_n)_a > n,\quad (f(x_n)\mid f(y_n))_b \leq r 
  \quad (\forall n\in \N).
 \end{align*}
 Then $(x_n\mid y_n)_a\to \infty$ and 
 $\limsup_n(f(x_n)\mid f(y_n))_b\leq r$. So $f$ is not visual.
 The converse is clear.
\end{proof}

\begin{proposition}
 \label{prop:visual-proper}
 Let $f\colon X\to Y$ be a visual map. Then $f$ is metrically proper.
\end{proposition}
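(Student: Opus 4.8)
The statement is Proposition~\ref{prop:visual-proper}: a visual map $f\colon X\to Y$ is metrically proper, i.e.\ preimages of bounded sets are bounded. Equivalently, I must show that if $(x_n)_n$ is a sequence in $X$ with $\ds{a,x_n}\to\infty$, then $\ds{b,f(x_n)}\to\infty$. So suppose for contradiction that $\ds{a,x_n}\to\infty$ but $(f(x_n))_n$ does not tend to infinity in $Y$ in the metric sense; after passing to a subsequence we may assume $\ds{b,f(x_n)}\leq R$ for all $n$ and some $R\geq 0$.

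First I would apply Corollary~\ref{cor:As-Arz} to the sequence $(x_n)_n$ (whose distances to the base point $a$ diverge): after passing to a further subsequence, which I again call $(x_n)_n$, we get $(x_k\mid x_l)_a\to\infty$ as $k,l\to\infty$. In particular, for each fixed $m$ we have $(x_{2m}\mid x_{2m+1})_a\to\infty$, so taking the two sequences $(x_{2n})_n$ and $(x_{2n+1})_n$ and applying the visuality hypothesis directly yields $(f(x_{2n})\mid f(x_{2n+1}))_b\to\infty$. More cleanly: by Lemma~\ref{lem:visualmap-r-s}, visuality says that large Gromov products in $X$ force large Gromov products in $Y$; so from $(x_k\mid x_l)_a\to\infty$ we get $(f(x_k)\mid f(x_l))_b\to\infty$ as $k,l\to\infty$, i.e.\ the sequence $(f(x_n))_n$ tends to infinity in the sense of the Gromov product in $Y$.

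The contradiction then comes from confronting this with the bound $\ds{b,f(x_n)}\leq R$. Indeed, by the very definition of the Gromov product $(\cdot\mid\cdot)_b$ on $X\cup\partial_b Y$ (Definition~\ref{def:gromov-prod}), any point $y\in Y$ lying in the ball $\bar B(b,2\lambda'\theta'(0)+k')$ has $(y\mid y')_b=0$ for every $y'$, and more generally a point at bounded distance from $b$ can only have a bounded Gromov product with anything: picking $\gamma\in\bar{\calL}_b(f(x_n))$ a good quasi-geodesic segment from $b$ to $f(x_n)$ of parameter length $a_\gamma$, the parameter regularity (Proposition~\ref{prop:qgeod-ray-convex}(\ref{qrayparam-reg}), applied with $s=0$) gives $a_\gamma\leq\tilde\theta'(\ds{b,f(x_n)})\leq\tilde\theta'(R)$, and hence $(\gamma\mid\eta)_b\leq a_\gamma\leq\tilde\theta'(R)$ for every $\eta$; by Lemma~\ref{lem:univ-const}(\ref{lem:D3}) this bounds $(f(x_n)\mid f(x_m))_b\leq\DA\,\tilde\theta'(R)$ for all $n,m$. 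This contradicts $(f(x_k)\mid f(x_l))_b\to\infty$, completing the proof. (One must be a little careful with the finitely many $f(x_n)$ that might land inside the central ball, but for those the product is simply $0$, which is even better.)

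**Main obstacle.** There is no deep difficulty here; the proposition is essentially a bookkeeping consequence of the machinery already set up. The one point that needs genuine care is the last step — translating ``$(f(x_n))_n$ stays in a bounded region of $Y$'' into ``the Gromov products $(f(x_n)\mid f(x_m))_b$ are bounded'' — because the Gromov product in these spaces is defined via suprema over families of good quasi-geodesics rather than by the usual formula, so one cannot just invoke $(y\mid y')_b\leq\ds{b,y}$ verbatim. The correct tool is the parameter-regularity estimate of Proposition~\ref{prop:qgeod-ray-convex}(\ref{qrayparam-reg}) bounding the parameter length of a good quasi-geodesic segment $b\to f(x_n)$ in terms of $\ds{b,f(x_n)}$, together with Lemma~\ref{lem:univ-const}(\ref{lem:D3}) to pass from segment-level products to point-level products. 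Everything else is immediate from Corollary~\ref{cor:As-Arz} and Lemma~\ref{lem:visualmap-r-s}.
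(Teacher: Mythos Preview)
Your proof is correct and follows essentially the same approach as the paper's: contrapose, extract a subsequence with diverging Gromov products via Corollary~\ref{cor:As-Arz}, invoke visuality to get diverging products on the image side, and reach a contradiction by showing that the Gromov product of points in a bounded region is bounded via Lemma~\ref{lem:univ-const}(\ref{lem:D3}) and a bound on the parameter length. The only cosmetic differences are that the paper feeds the \emph{same} subsequence into both slots of the visuality condition (looking at $(f(x_{N_n})\mid f(x_{N_n}))_b$), and it bounds the parameter length $L_n$ directly from the quasi-geodesic inequality $L_n\leq\lambda'(\ds{f(x_{N_n}),b}+k')$ rather than via parameter regularity; both routes work equally well.
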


\begin{proof}
 We prove the contraposition. Suppose $f$ is not metrically proper. Then 
 there exists a sequence $(x_n)_{n}$ in $X$ 
 such that $\ds{x_n,a}\to \infty$ and $\sup_{n} \ds{f(x_n),b}<\infty$. 
 By Corollary~\ref{cor:As-Arz} there exists $(N_n)_n\subset \N$ such that 
 $(x_{N_n}\mid x_{N_n}) \to \infty$. 

 For each $n\in \N$, we choose $\gamma_n\in \xcalL{Y}_b$ with 
 $\Dom \gamma_n = [0,L_n]$
 such that $\gamma_n(L_n) = f(x_{N_n})$. 
 By Lemma~\ref{lem:univ-const} (\ref{lem:D3}) we have
 \begin{align*}
  (f(x_{N_n})\mid f(x_{N_n}))_b &\leq \DA(\gamma_n\mid \gamma_n)_b
  \leq \DA L_n \\
  &\leq \DA(\lambda'(\ds{f(x_{N_n}),b}+k')) 
  \leq \lambda'\DA(\sup_{n} \ds{f(x_n),b}+k')<\infty.
 \end{align*}
 Therefore $f$ is not visual.
\end{proof}

\begin{definition}
 Let $f\colon X\to Y$ be a visual map. We define a map
 $\partial f\colon \partial X \to \partial Y$ as follows. 
 For $x \in \partial X$, we choose a representative 
 $\gamma \in \xcalL{X}_a^\infty$ of $x$, that is, $\gamma\in x$. 
 Then we define $\partial f(x):= \lim f(\gamma(n))$. The following proposition says that
 the limit does exists.
 We say that $\partial f$ is \textit{induced} by $f$.
\end{definition}

\begin{proposition}
\label{prop:visual-extension}
 Let $f\colon X\to Y$ be a visual map.
 The map $\partial f$ is well-defined,
 and for $x\in \partial X$, $\partial f(x)$ does not depend on the choice
 of the representative $\gamma\in x$.
\end{proposition}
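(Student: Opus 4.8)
We need two things: first, that for a representative $\gamma \in \xcalL{X}_a^\infty$ of $x$, the sequence $(f(\gamma(n)))_n$ tends to infinity in $Y$ and hence converges to a point of $\partial Y$; second, that this limit is independent of the choice of representative. The key ingredients are the visuality of $f$, Lemma~\ref{lem:tendstoinfty} (a sequence tending to infinity converges in $\partial Y$), and Lemma~\ref{lem:univ-const}(\ref{lem:qrayultmvisu}) (the quasi-ultrametric inequality for the Gromov product on $X \cup \partial_O X$).

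\begin{proof}
 First we show the limit exists. Let $x \in \partial X$ and choose $\gamma \in \xcalL{X}_a^\infty$ with $\gamma \in x$. Since $\gamma$ is a quasi-geodesic ray, $\ds{a,\gamma(n)} \to \infty$, and moreover for $m,n \in \N$ we have $(\gamma(m)\mid \gamma(n))_a \to \infty$ as $m,n\to\infty$; indeed $\gamma$ restricted to $[0,m]$ and to $[0,n]$ (or rather good quasi-geodesic segments in $\xcalL{X}_a$ approximating them, supplied by the $\xcalL{X}$-approximate sequence for $\gamma$, together with Lemma~\ref{lem:prod-approx}) share a long initial portion, so Lemma~\ref{lem:prod-est-bigon} gives $(\gamma(m)\mid\gamma(n))_a \geq \DA^{-1}\min\{m,n\} - O(1)$. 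Thus $(\gamma(n))_n$ tends to infinity in $X$. By visuality of $f$ (via Lemma~\ref{lem:visualmap-r-s}), for every $r>0$ there is $s>0$ so that $(\gamma(m)\mid\gamma(n))_a > s$ implies $(f(\gamma(m))\mid f(\gamma(n)))_b > r$; hence $(f(\gamma(n)))_n$ tends to infinity in $Y$. By Lemma~\ref{lem:tendstoinfty} there exists $\partial f(x) \in \partial Y$ with $f(\gamma(n)) \to \partial f(x)$.

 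Next we show independence of the representative. Let $\gamma, \eta \in \xcalL{X}_a^\infty$ with $\gamma \sim \eta$, so by Lemma~\ref{lem:HdistD} we have $\ds{\gamma(n),\eta(n)} \leq D$ for all $n$. Then $(\gamma(n)\mid\eta(n))_a \to \infty$: this follows from Lemma~\ref{lem:prod-est-bigon} applied with $t=t'=n$ and $\alpha = \ds{\gamma(n),\eta(n)} \leq D$, which gives $(\gamma(n)\mid\eta(n))_a \geq (n - \tilde\theta(D))/\bigl(E(D + \lambda\tilde\theta(D) + k_1)\bigr) \to \infty$ (using that $\xcalL{X}_a$-approximate sequences for $\gamma$ and $\eta$ realize these products up to the universal constant $\DA$, as in Lemma~\ref{lem:univ-const}(\ref{lem:D3})). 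By visuality, $(f(\gamma(n))\mid f(\eta(n)))_b \to \infty$. Now let $z := \partial f(x)$ computed via $\gamma$ and $z' := \partial f(x)$ computed via $\eta$; we have $f(\gamma(n)) \to z$ and $f(\eta(n)) \to z'$, hence $(f(\gamma(n))\mid z)_b \to \infty$ and $(f(\eta(n))\mid z')_b \to \infty$. By the quasi-ultrametric inequality Lemma~\ref{lem:univ-const}(\ref{lem:qrayultmvisu}),
 \begin{align*}
  (z\mid z')_b \geq \DA^{-1}\min\bigl\{(z\mid f(\gamma(n)))_b,\ (f(\gamma(n))\mid f(\eta(n)))_b,\ (f(\eta(n))\mid z')_b\bigr\},
 \end{align*}
 where the three-fold minimum is handled by applying the quasi-ultrametric inequality twice. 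As $n\to\infty$ the right-hand side tends to infinity, so $(z\mid z')_b = \infty$, which forces $z = z'$ in $\partial Y$. Hence $\partial f$ is well-defined.
\end{proof}

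The step I expect to be the main obstacle is establishing rigorously that $(\gamma(n)\mid\gamma(m))_a \to \infty$ for a single quasi-geodesic ray $\gamma \in \xcalL{X}_a^\infty$, and likewise $(\gamma(n)\mid\eta(n))_a\to\infty$ for equivalent rays: the Gromov product in Definition~\ref{def:gromov-prod} is defined via suprema over good quasi-geodesic \emph{segments} in $\xcalL{X}_a$ ending at the relevant points, not directly in terms of $\gamma$ itself, so one must pass through the $\xcalL{X}$-approximate sequences and invoke Lemma~\ref{lem:prod-approx}, Lemma~\ref{lem:prod-est-bigon}, and the comparison Lemma~\ref{lem:univ-const}(\ref{lem:D3})/(\ref{lem:qrayultmvisu}) to bound $(\gamma(n)\mid\gamma(m))_a$ from below. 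Once these two ``tends to infinity'' facts are in place, visuality and Lemmas~\ref{lem:tendstoinfty} and \ref{lem:univ-const}(\ref{lem:qrayultmvisu}) finish the argument mechanically.
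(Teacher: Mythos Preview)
Your proof is correct and follows essentially the same architecture as the paper's: show the image sequence tends to infinity, invoke Lemma~\ref{lem:tendstoinfty} to get a limit, then use visuality and the quasi-ultrametric inequality (Lemma~\ref{lem:univ-const}(\ref{lem:qrayultmvisu})) twice to prove independence of the representative.

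The one place you diverge is exactly the step you flagged as the obstacle, namely showing $(\gamma(m)\mid\gamma(n))_a\to\infty$ and $(\gamma(n)\mid\eta(n))_a\to\infty$. You handle this from first principles via Lemma~\ref{lem:prod-est-bigon} and approximate sequences, which works but is laborious. The paper instead appeals to Lemma~\ref{lem:associate}: since $x_n=\gamma(n)$ lies on $\gamma$ we have $(x_n\mid[\gamma])_a\to\infty$, and likewise $(x'_n\mid[\gamma])_a\to\infty$ for $x'_n=\eta(n)$ (as $\eta\sim\gamma$); the desired conclusions then follow immediately from the quasi-ultrametric inequality through the boundary point $x=[\gamma]$. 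This bypasses any need to pass through approximate segments or Lemma~\ref{lem:prod-est-bigon}, and it is the cleaner route you were looking for.
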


\begin{proof}
 For $x\in \partial X$, we choose a representative 
 $\gamma \in \xcalL{X}_a^\infty$ of $x$, that is, $\gamma\in x$. 
 For $n\in \N$, set $x_n:=\gamma(n)$, $y_n:=f(x_n)$. 
 By Lemma~\ref{lem:associate}, we have $(x_n\mid x_m)_a\to \infty$.
 Since $f$ is visual , we have 
 $(y_n\mid y_m)_b\to \infty$. Then by Lemma~\ref{lem:tendstoinfty}, 
 there exists $y\in \partial Y$ such that $y_n\to y$. 
 Therefore $\partial f(x) = y$ is well-defined.

 We will show that $y$ does not depend on the choice of $\gamma$.
 Let $\eta\in x$ and set $x'_n:=\eta(n)$, $y'_n:=f(x'_n)$, 
 $y:=\lim_{n\to \infty}y'_n$. Since
 \begin{align*}
  (x_n\mid x'_n)_a \geq \DA^{-1}\min\{(x_n\mid x)_a,\, (x\mid x'_n)_a\}
  \to \infty,
 \end{align*}
 we have  $(y_n\mid y'_n)_{b}\to \infty$. So
 \begin{align*}
  (y\mid y')_b &\geq \DA^{-1}\min\{(y\mid y_n)_b,\,(y_n\mid y')_b\}\\
  &\geq \DA^{-2}\min\{(y\mid y_n)_{b},\, (y_n\mid y'_n)_{b},\, (y'_n\mid y')_{b}\}
  \to \infty.
 \end{align*}
 Therefore $y=y'$. 
\end{proof}

 Lemma~\ref{lem:visualmap-r-s} and following Lemma~\ref{lem:partialf-r-s}
 imply that $\partial f$ is continuous on $\partial X$.

\begin{lemma}
\label{lem:partialf-r-s}
 Let $f\colon X\to Y$ be a visual map and 
 $\partial f\colon \partial X\to \partial Y$ be a map induced by $f$.
 For $r>0$ there exists $s>0$ such that for $x,y\in \partial X$ with
 $(x\mid y)>s$, we have $(\partial f(x)\mid \partial f(y))>r$.
\end{lemma}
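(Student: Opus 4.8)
The plan is to reduce the statement about boundary points $x,y \in \partial X$ to the already-established statement for sequences in $X$, using the fact that boundary points can be approximated by sequences $\gamma(n)$ along representative quasi-geodesic rays, and that $\partial f$ is defined precisely as the limit of $f(\gamma(n))$. More concretely, I would argue by contradiction: suppose the conclusion fails for some $r>0$, so there are sequences $(x^{(j)})_j, (y^{(j)})_j$ in $\partial X$ with $(x^{(j)} \mid y^{(j)}) > j$ but $(\partial f(x^{(j)}) \mid \partial f(y^{(j)})) \le r$ for all $j$.

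For each $j$, choose representatives $\gamma_j \in \xcalL{X}_a^\infty(x^{(j)})$ and $\eta_j \in \xcalL{X}_a^\infty(y^{(j)})$. By Lemma~\ref{lem:univ-const}(\ref{lem:D3}), $(\gamma_j \mid \eta_j)_a$ is comparable to $(x^{(j)} \mid y^{(j)})$, hence tends to infinity, and Lemma~\ref{lem:univ-const}(\ref{lem:maximizer}) then gives $\ds{\gamma_j(t), \eta_j(t)} \le \DA$ for all $t \le (\gamma_j \mid \eta_j)_a$. Next I would fix, for each $j$, an integer $n_j$ large enough that (i) $n_j \le (\gamma_j \mid \eta_j)_a$, so $\ds{\gamma_j(n_j), \eta_j(n_j)} \le \DA$, and (ii) $f(\gamma_j(n_j))$ and $f(\eta_j(n_j))$ are close enough to their respective limits $\partial f(x^{(j)})$ and $\partial f(y^{(j)})$ that the Gromov products satisfy, say, $(f(\gamma_j(n_j)) \mid \partial f(x^{(j)}))_b > j$ and similarly for $y^{(j)}$; this is possible since by definition $f(\gamma_j(n)) \to \partial f(x^{(j)})$, which means $(f(\gamma_j(n)) \mid \partial f(x^{(j)}))_b \to \infty$ as $n \to \infty$. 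Now set $u_j := \gamma_j(n_j)$ and $v_j := \eta_j(n_j)$ in $X$. Since $\ds{u_j, v_j} \le \DA$ while $\ds{u_j, a}, \ds{v_j, a} \to \infty$ (the latter because $n_j \to \infty$ along a quasi-geodesic ray), the pair $(u_j), (v_j)$ satisfies $(u_j \mid v_j)_a \to \infty$. Visuality of $f$ then gives $(f(u_j) \mid f(v_j))_b \to \infty$.

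Finally I would assemble a quasi-ultrametric chain using Lemma~\ref{lem:univ-const}(\ref{lem:qrayultmvisu}):
\begin{align*}
(\partial f(x^{(j)}) \mid \partial f(y^{(j)}))_b \ge \DA^{-2}\min\{(\partial f(x^{(j)}) \mid f(u_j))_b,\ (f(u_j) \mid f(v_j))_b,\ (f(v_j) \mid \partial f(y^{(j)}))_b\}.
\end{align*}
All three quantities on the right tend to infinity with $j$, so the left side does too, contradicting $(\partial f(x^{(j)}) \mid \partial f(y^{(j)}))_b \le r$. This proves the lemma.

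The main obstacle I anticipate is purely bookkeeping: making sure the choice of $n_j$ can simultaneously respect the cutoff $n_j \le (\gamma_j \mid \eta_j)_a$ (needed for the $\DA$-bound on $\ds{u_j,v_j}$) and be large enough to push $f(u_j)$ near $\partial f(x^{(j)})$ — but since $(\gamma_j \mid \eta_j)_a \to \infty$, any fixed threshold is eventually below the cutoff, so a diagonal choice works. One should also check the degenerate possibility that $(\gamma_j \mid \eta_j)_a = \infty$, i.e. $x^{(j)} = y^{(j)}$, in which case $\partial f(x^{(j)}) = \partial f(y^{(j)})$ and the Gromov product is already $\infty$, so those indices cause no trouble. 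Everything else is a direct application of the quasi-ultrametric inequality and the definition of visuality, exactly mirroring the argument in Proposition~\ref{prop:visual-extension}.
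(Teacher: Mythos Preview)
Your argument is essentially correct and uses the same ingredients as the paper's (approximation of boundary points by points on rays, visuality, and the quasi-ultrametric chain of Lemma~\ref{lem:univ-const}(\ref{lem:qrayultmvisu})). The paper, however, gives a direct proof rather than one by contradiction: it first invokes Lemma~\ref{lem:visualmap-r-s} to obtain, for the target $\DA^2 r$, a single threshold $s'$ that works uniformly for all pairs $p,q \in X$; then for any $x,y \in \partial X$ with $(x\mid y)_a > \DA^2 s'$ it picks one sufficiently large $n$, shows $(\gamma(n)\mid\eta(n))_a>s'$ via the chain through $x$ and $y$, and runs your final chain once to conclude.

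Your handling of the ``bookkeeping obstacle'' has a small gap. You assert that the threshold needed for (ii) is a ``fixed threshold'' eventually dominated by the cutoff $(\gamma_j \mid \eta_j)_a$, but the definition of $\partial f$ only gives $(f(\gamma_j(n)) \mid \partial f(x^{(j)}))_b \to \infty$ for each fixed $j$, with no a priori bound on how large $n$ must be in terms of $j$; since $\gamma_j$ and $x^{(j)}$ vary with $j$, the required $n$ could in principle outrun $(\gamma_j\mid\eta_j)_a$. The uniformity you need is true, but establishing it already requires Lemma~\ref{lem:visualmap-r-s} together with the uniform linear growth of $(\gamma_j(n) \mid [\gamma_j])_a$ from Lemma~\ref{lem:prod-est-bigon}. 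Once you are willing to invoke Lemma~\ref{lem:visualmap-r-s}, constraint (i) and the metric bound $\ds{u_j,v_j}\le\DA$ become unnecessary: you can get $(u_j \mid v_j)_a$ large directly from the quasi-ultrametric chain through $x^{(j)}$ and $y^{(j)}$, which is exactly what the paper does.
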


\begin{proof}
 We fix $r>0$. By Lemma~\ref{lem:visualmap-r-s},
 there exists $s'>0$ such that for all $p,q\in X$ 
 \begin{align*}
  (p\mid q)_a > s' \Rightarrow (f(p)\mid f(q))_{b}> \DA^2 r.
 \end{align*}
 Set $s:= \DA^{2} s'$. Let $x,y\in \partial X$ with $(x\mid y)_{a} > s$.
 We choose representative $\gamma\in x$ and $\eta\in y$. For $n\in \N$,
 set $x_n:=\gamma(n)$ and $y_n:=\eta(n)$. 
 Since
 \begin{align*}
  (x_n\mid y_n)_{a}\geq \DA^{-2}\min \{(x_n\mid x)_{a}, (x\mid y)_{a}, (y\mid y_n)_{a}\}
 \end{align*}
 and $(x_n\mid x)_{a}\to \infty$ , $(y\mid y_n)_{a}\to \infty$, 
 for sufficiently large $n$, we have $(x_n\mid y_n)_{a}>s'$. 
 Then $(f(x_n)\mid f(y_n))_{b}>\DA^2 r$. Now we have
 \begin{align*}
  (\partial f(x)\mid \partial f(y))_{b}\geq \DA^{-2}
  \min \{(\partial f(x)\mid f(x_n))_{b},\, 
  (f(x_n)\mid f(y_n))_{b},\, (f(y_n)\mid \partial f(y))_{b}\}.
 \end{align*}
 Since $(\partial f(x)\mid f(x_n))_{b}\to \infty$ and 
 $(f(y_n)\mid \partial f(y))_{b}\to \infty$,
 we have $(\partial f(x)\mid \partial f(y))_{b}> r$.
\end{proof}

\begin{corollary}
 \label{cor:visu-bdl-cont}
 Let $f\colon X\to Y$ be a visual map.
 Then the map 
 \begin{align*}
  f\cup \partial f\colon X\cup \partial X \to Y \cup \partial Y
 \end{align*} 
 is continuous at every point in $\partial X$.
\end{corollary}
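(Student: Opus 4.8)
The plan is to show sequential continuity of $f\cup\partial f$ at a boundary point, which suffices since $X\cup\partial X$ is metrizable (indeed compact, by the uniform structure $\{V_n\}$ described above). Fix $x\in\partial X$ and a sequence $(p_n)_n$ in $X\cup\partial X$ with $p_n\to x$. I want to conclude $(f\cup\partial f)(p_n)\to (f\cup\partial f)(x)=\partial f(x)$ in $Y\cup\partial Y$. By the definition of the topology via $\{V_n(\cdot)\}$, convergence $p_n\to x$ with $x\in\partial X$ is equivalent to $(p_n\mid x)_a\to\infty$ (the $\ds{\cdot,\cdot}<n^{-1}$ part of $V_n$ is irrelevant since $x$ is not in $X$), and similarly the target convergence I must prove is equivalent to $(\,(f\cup\partial f)(p_n)\mid \partial f(x)\,)_b\to\infty$.

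The first step is a reduction: it is enough to handle two cases, $p_n\in X$ for all $n$ and $p_n\in\partial X$ for all $n$, since a general sequence splits into (at most) these two subsequences and convergence of both subsequences gives convergence of the whole sequence. The second step is the key estimate, which extends Lemma~\ref{lem:partialf-r-s}: I claim that for every $r>0$ there is $s>0$ such that for all $p\in X\cup\partial X$, $(p\mid x)_a>s$ implies $(\,(f\cup\partial f)(p)\mid \partial f(x)\,)_b>r$. Granting this claim, fix $r>0$, take the corresponding $s$, and choose $N$ with $(p_n\mid x)_a>s$ for $n\geq N$; then $(\,(f\cup\partial f)(p_n)\mid\partial f(x)\,)_b>r$ for $n\geq N$, which is exactly the desired convergence.

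To prove the claim I mimic the proof of Lemma~\ref{lem:partialf-r-s}, using the quasi-ultrametric inequality Lemma~\ref{lem:univ-const}(\ref{lem:qrayultmvisu}) for the mixed Gromov product on $X\cup\partial_O X$ so that points of $X$ and of $\partial X$ can be treated uniformly. When $p\in\partial X$ the claim is literally Lemma~\ref{lem:partialf-r-s}. When $p=q\in X$: pick a good quasi-geodesic ray $\gamma\in\xcalL{X}_a^\infty$ representing $x$, set $x_m:=\gamma(m)$, so $(x_m\mid x)_a\to\infty$; from $(q\mid x)_a>s:=\DA^2 s'$ and the quasi-ultrametric we get $(q\mid x_m)_a>s'$ for large $m$, hence $(f(q)\mid f(x_m))_b>\DA^2 r$ by the choice of $s'$ (via Lemma~\ref{lem:visualmap-r-s}); since $f(x_m)\to\partial f(x)$, i.e.\ $(f(x_m)\mid\partial f(x))_b\to\infty$, another application of the quasi-ultrametric,
\begin{align*}
 (f(q)\mid\partial f(x))_b\geq \DA^{-2}\min\{(f(q)\mid f(x_m))_b,\,(f(x_m)\mid\partial f(x))_b\},
\end{align*}
gives $(f(q)\mid\partial f(x))_b>r$ for large $m$, proving the claim.

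I expect the only real subtlety to be bookkeeping: making sure the mixed Gromov product $(\cdot\mid\cdot)_b$ on $Y\cup\partial_O Y$ is the quantity that genuinely controls the topology at $\partial Y$ (so that the estimate on Gromov products translates into neighbourhood containment), and keeping the constants $\DA$, $s'$, $s$ straight through the two nested quasi-ultrametric applications. Both of these are already handled in the preceding lemmas, so the argument should be short.
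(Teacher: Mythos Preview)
Your argument is correct and follows essentially the same route as the paper's own proof: in both, the work is to establish the uniform ``claim'' that for every $r>0$ there is $s>0$ with $(p\mid x)_a>s\Rightarrow ((f\cup\partial f)(p)\mid\partial f(x))_b>r$, handled by Lemma~\ref{lem:partialf-r-s} when $p\in\partial X$ and, when $p\in X$, by approximating $x$ along a representing ray $\gamma$, applying Lemma~\ref{lem:visualmap-r-s}, and chaining two quasi-ultrametric inequalities. The only differences are cosmetic: the paper states the neighbourhood estimate directly (with thresholds $\DA s$ and $\DA^{-1}r$) rather than passing through sequential continuity, and your preliminary split of $(p_n)$ into interior and boundary subsequences is redundant once you prove the uniform claim.
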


\begin{proof}
 We fix $x\in \partial X$. We will show that for $r>0$, there exists $s>0$
 such that, for $y\in X\cup \partial X$,
 \begin{align*}
  (x\mid y)_a> \DA s \Rightarrow 
  (\partial f(x)\mid (f\cup \partial f)(y))_b> \DA^{-1}r.
 \end{align*}

 For $r>0$, we choose $s>0$ satisfying the statements of both 
 Lemma~\ref{lem:visualmap-r-s} and Lemma~\ref{lem:partialf-r-s} for $f$ and $r$.

 First let $y \in \partial X$ with $(x\mid y)_a>s$.
 Then by Lemma~\ref{lem:partialf-r-s}, we have
 $(\partial f(x)\mid \partial f(y))_b>r$.

 Now let $y\in X$ with $(x\mid y)_a> \DA s$.
 Let $\gamma\in \xcalL{X}_a^\infty$ with $\gamma\in x$.
 By the proof of Proposition~\ref{prop:visual-extension},
 there exists $n\in \N$ such that $(x\mid \gamma(n))_a>\DA s$ and
 $(\partial f(x)\mid f(\gamma(n)))_b > r$. 
 Then
 \begin{align*}
  (y\mid \gamma(n))_a\geq \DA^{-1} \min\{(y\mid x)_a,(x\mid \gamma(n))_a\}
  > s.
 \end{align*}
 Thus by Lemma~\ref{lem:visualmap-r-s}, 
 $(f(\gamma(n))\mid f(y))_b> r$. Therefore
 \begin{align*}
  (\partial f(x)\mid f(y))_b\geq 
  \DA^{-1}\min\{(\partial f(x)\mid f(\gamma(n))_b,(f(\gamma(n))\mid f(y))_b\}
  >\DA^{-1}r.
 \end{align*}
\end{proof}

\subsection{Maps compatible with systems of good quasi-geodesics}
Let $\tau\geq 0$ be a constant.
We say that a map $\sigma\colon \Rp\to\Rp$ is a $\tau$-\textit{rough contraction}
if $\sigma(0)=0$, $\lim_{t\to\infty}\sigma(t)=\infty$, and
\begin{align*}
 \rho(t)\leq t \quad (\forall t\in \Rp),\\
 \abs{\rho(t) - \rho(s)}\leq \abs{t-s} + \tau  \quad(\forall t,s\in \Rp),\\
 t\leq s \Rightarrow \rho(t)\leq \rho(s) \quad (\forall t,s\in\Rp,).
\end{align*}
We say that $\sigma$ is a \textit{rough contraction} if it is $\tau$-rough contraction for some
$\tau\geq 0$.

Dydak and Virk~\cite[Definition 3.2]{dydak-G-bdry-maps} introduced a class of radial function, 
in the setting of Gromov hyperbolic spaces. The following is its analogue in the setting of
coarsely convex spaces. We remark that the following is slightly weaker than the one defined by
Dydak and Virk.

\begin{definition}
 Let $\sigma\colon \Rp\to \Rp$ be a rough contraction.
 We say that a map $f\colon X\to Y$ 
 is $\sigma$-$\xcalL{X}$-\textit{radial}
 if for $\gamma\in \xcalL{X}_a$, we have
 \begin{align*}
  \sigma(t) \leq \ds{f(\gamma(t)),f(a)} 
  \quad (\forall t\in \Dom\gamma).
 \end{align*}
 We say that $f$ is \textit{$\xcalL{X}$-radial} if it is
 $\sigma$-$\xcalL{X}_a$-radial for some $\sigma$ and $a\in X$.
\end{definition}

The above condition is in fact equivalent to the following. 
Let $\sigma\colon \Rp\to\Rp$ be a rough contraction. 
We say that a map
$f\colon X\to Y$ is \textit{weakly $\sigma$-$\xcalL{X}_a$-radial} if 
for all $x\in X$, there exists $\gamma_x\in \xcalL{X}_a$ and 
$T_x\in \Rp$ such that $x=\gamma_x(T_x)$ and 
 \begin{align*}
  \sigma(T_x) \leq \ds{f(x),f(a)}.
 \end{align*}

\begin{lemma}
\label{lem:weak-rad-rad}
 Let $\sigma\colon \Rp\to\Rp$ be a rough contraction. 
 If a large scale Lipschitz map $f\colon X\to Y$
 is \textit{weakly $\sigma$-$\xcalL{X}_a$-radial},
 then it is $\hat{\sigma}$-$\xcalL{X}_a$-radial for some 
 rough contraction $\hat{\sigma}$.
\end{lemma}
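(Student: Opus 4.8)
The plan is to produce, from a weakly $\sigma$-$\xcalL{X}_a$-radial map $f$, a genuine rough contraction $\hat\sigma$ that bounds $\ds{f(\gamma(t)),f(a)}$ below along \emph{every} $\gamma\in\xcalL{X}_a$. The obstruction to doing this directly is that the witnessing quasi-geodesic $\gamma_x$ and parameter $T_x$ attached to a point $x=\gamma(t)$ need not match the quasi-geodesic $\gamma$ or the parameter $t$ we are actually handed; so the first task is to control how $T_x$ compares with $t$. Here we use the parameter-regularity axiom (\ref{qparam-reg})$^q$ (or Proposition~\ref{prop:qgeod-ray-convex}(\ref{qrayparam-reg})): since $\gamma_x(0)=a=\gamma(0)$ and $\gamma_x(T_x)=x=\gamma(t)$, we get $\abs{T_x-t}\leq\tilde\theta(0)$, a uniform constant. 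Thus weak radiality along the witness translates to $\ds{f(\gamma(t)),f(a)}\geq\sigma(t-\tilde\theta(0))$ for $t\geq\tilde\theta(0)$, where we may take the value to be $0$ for smaller $t$ using monotonicity of $\sigma$.

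Next I would define $\hat\sigma(t):=\sigma(\max\{t-\tilde\theta(0),0\})$ as a first candidate; it is non-decreasing, vanishes at $0$, and tends to infinity because $\sigma$ does. The remaining axioms of a rough contraction are: $\hat\sigma(t)\leq t$ for all $t$, and the $1$-Lipschitz-up-to-an-additive-constant condition $\abs{\hat\sigma(t)-\hat\sigma(s)}\leq\abs{t-s}+\hat\tau$. The Lipschitz-type bound is inherited from $\sigma$ with the same constant, since $t\mapsto\max\{t-\tilde\theta(0),0\}$ is $1$-Lipschitz and composition with $\sigma$ only adds the additive defect of $\sigma$. The inequality $\hat\sigma(t)\leq t$ needs a small argument: $\sigma(t)\leq t$ already holds, and shifting the argument down can only decrease $\hat\sigma$, so $\hat\sigma(t)=\sigma(\max\{t-\tilde\theta(0),0\})\leq\max\{t-\tilde\theta(0),0\}\leq t$. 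Hence $\hat\sigma$ is a rough contraction.

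The one place where large scale Lipschitzness of $f$ is genuinely needed — and where I expect the main technical care — is \emph{not} in the lower bound but in checking the definition is consistent: a priori the weak hypothesis only controls $\ds{f(x),f(a)}$ for the chosen representation $x=\gamma_x(T_x)$, but when we feed in an arbitrary $\gamma\in\xcalL{X}_a$ and a parameter $t$, the point $\gamma(t)$ equals $\gamma_{\gamma(t)}(T_{\gamma(t)})$ and we just showed $\abs{T_{\gamma(t)}-t}\leq\tilde\theta(0)$; so in fact no Lipschitz estimate on $f$ is required for the lower bound itself. (The large scale Lipschitz hypothesis is carried along only so that $f$ remains in the ambient class of maps for which ``$\xcalL{X}$-radial'' was defined, and one may invoke it if one prefers to pass through $\ds{f(\gamma(t)),f(\gamma(T_x))}\leq A\lambda\abs{t-T_x}+\dotsb$ rather than re-choosing the witness.) Assembling: for every $\gamma\in\xcalL{X}_a$ and $t\in\Dom\gamma$ we obtain $\ds{f(\gamma(t)),f(a)}\geq\sigma(T_{\gamma(t)})\geq\sigma(\max\{t-\tilde\theta(0),0\})=\hat\sigma(t)$, which is exactly $\hat\sigma$-$\xcalL{X}_a$-radiality. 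The hardest step is therefore the clean bookkeeping of the parameter mismatch $\abs{T_x-t}\leq\tilde\theta(0)$ and confirming each rough-contraction axiom survives the shift; everything else is formal.
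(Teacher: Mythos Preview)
Your proof is correct and follows essentially the same approach as the paper: both use parameter regularity (\ref{qparam-reg})$^q$ to bound $\abs{T_x-t}\leq\theta(0)$ (the paper uses $\theta(0)$ rather than $\tilde\theta(0)$, since $\gamma,\gamma_x\in\calL$), and then adjust $\sigma$ to absorb this constant. The only cosmetic difference is that the paper sets $\hat\sigma(t):=(\sigma(t)-\theta(0)-\tau)\vee 0$ via the rough-Lipschitz bound $\abs{\sigma(T_x)-\sigma(t)}\leq\abs{T_x-t}+\tau$, whereas you shift the argument and set $\hat\sigma(t):=\sigma(\max\{t-\tilde\theta(0),0\})$ via monotonicity; either choice works, and your observation that the large scale Lipschitz hypothesis is not actually used in the estimate is correct (the paper does not invoke it either).
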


\begin{proof}
 Let $\gamma\in \xcalL{X}_a$. 
 We fix $t\in \Dom\gamma$ and set  
 $x=\gamma(t)$. 
 Since $f$ is weakly $\sigma$-$\xcalL{X}_a$-radial, there exists 
 $\gamma_x\in \xcalL{X}_a$ and $T_x\in \Rp$ such that 
 $x = \gamma_x(T_x)$ and 
 \begin{align*}
  \sigma(T_x) \leq \ds{f(\gamma_x(t)),f(a)}.
 \end{align*}
 Since $\gamma(t)=x=\gamma_x(T_x)$, we have $\abs{t-T_x}\leq \theta(0)$.
 We suppose that $\sigma$ is $\tau$-rough contraction. Then we have
 \begin{align*}
  \abs{\sigma(T_x)-\sigma(t)}\leq \abs{T_x-t}+\tau\leq \theta(0)+\tau.
 \end{align*}
 Thus we have
 \begin{align*}
  \ds{f(\gamma(t)),f(a)} 
  &= \ds{f(\gamma_x(T_x)),f(a)} \\
  &\geq \sigma(T_x)\\
  &\geq \sigma(t) -\theta(0)-\tau.
 \end{align*}
 Set 
 $\hat{\sigma}(t):=\left(\sigma(t) -\theta(0)-\tau\right)\vee 0$. Then $f$ is $\hat{\sigma}$-$\xcalL{X}_a$-radial.
\end{proof}

In the study of maps between Gromov hyperbolic spaces, the Morse lemma plays 
essential roles. In the case of coarsely convex spaces, in general, the Morse
lemma does not holds. Instead, we introduce the following.

\begin{definition}
\label{def:LL-equiv}
 Let $\rho\colon \Rp\to \Rp$ be  a rough contraction. 
 Let $H>0$. We say that a map $f\colon X\to Y$ is 
 $\rho$-$H$-$\xcalL{X}_a$-$\xcalL{Y}_b$-\textit{equivariant} if
 $f(a)=b$, and, 
 for $\gamma\in \xcalL{X}_a$ with $\Dom \gamma = [0,L_\gamma]$ 
 and for $\eta\in \xcalL{Y}_b$ with 
 $\Dom \eta = [0,L_\eta]$ satisfying
 $\eta(L_\eta)=f(\gamma(L_\gamma))$, we have
 \begin{align*}
  \ds{f\circ \gamma(t),\eta(\rho(t))}< H 
  \quad (0\leq \forall t\leq \min\{L_\gamma,\rho^{-1}(L_\eta)\}).
 \end{align*}
 We say that $f$ is \textit{$\xcalL{X}$-$\xcalL{Y}$-equivariant} if it is
 $\rho$-$H$-$\xcalL{X}_a$-$\xcalL{Y}_b$-equivariant for some 
 $\rho$, $H$, $a\in X$ and $b\in Y$.
\end{definition}

\begin{theorem}
\label{thm:visual-Lip}
 For $\lambda_1\geq 1$, $\nu_1,H,\tau> 0$,
 and $\tau$-rough contractions
 $\sigma,\rho\colon \Rp\to \Rp$, there exists
 a metrically proper map $\hat{\rho}\colon \Rp\to \Rp$
 such that the following holds;
 let $f\colon X\to Y$ be a 
 $(\lambda_1,\nu_1)$-large scale Lipschitz, 
 $\sigma$-$\xcalL{X}_a$-radial, 
 and $\rho$-$H$-$\xcalL{X}_a$-$\xcalL{Y}_b$-equivariant map satisfying $f(a) = b$.
 Then for $x,y\in X$, we have
 \begin{align*}
  (f(x)\mid f(y))_{b} \geq \hat{\rho}((x\mid y)_a).
 \end{align*}
 Especially, $f$ is visual. 
\end{theorem}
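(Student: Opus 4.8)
The plan is to realize $(x\mid y)_a$ by a pair of good quasi-geodesic segments, push the resulting configuration forward by $f$ using the radiality and equivariance hypotheses, and then convert the image configuration back into a lower bound for $(f(x)\mid f(y))_b$ by means of Lemma~\ref{lem:prod-est-bigon}. Fix $x,y\in X$ and set $T:=(x\mid y)_a$. There will be a threshold $T_0\geq 0$, depending only on the structure constants of $X$ and $Y$ and on $\sigma,\rho,\lambda_1,\nu_1,H,\tau$, beyond which the argument below is valid; for $T<T_0$ we simply put $\hat\rho(T):=0$, which suffices because Gromov products are nonnegative. So assume $T\geq T_0$. Then $x,y\notin\bar B(a,2\lambda\theta(0)+k)$, so by condition~(\ref{qconn})$^q$ we may choose $\gamma,\eta\in\xcalL{X}_a$ with $\gamma(L_\gamma)=x$ and $\eta(L_\eta)=y$. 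By Lemma~\ref{lem:univ-const}(\ref{lem:D3}) for $X$ we have $T\leq\DA(\gamma\mid\eta)_a$, and since $(\gamma\mid\eta)_a\leq\min\{L_\gamma,L_\eta\}$ this gives $\DA^{-1}T\leq(\gamma\mid\eta)_a\leq\min\{L_\gamma,L_\eta\}$. Lemma~\ref{lem:univ-const}(\ref{lem:maximizer}) then yields $\ds{\gamma(t),\eta(t)}\leq\DA$ for $t\in[0,\DA^{-1}T]$, whence $\ds{f(\gamma(t)),f(\eta(t))}\leq\lambda_1\DA+\nu_1$ on that interval, by large scale Lipschitzness.

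Next, I would bring in the remaining two hypotheses. By $\sigma$-$\xcalL{X}_a$-radiality, $\ds{f(x),b}=\ds{f(\gamma(L_\gamma)),f(a)}\geq\sigma(L_\gamma)\geq\sigma(\DA^{-1}T)$, and likewise $\ds{f(y),b}\geq\sigma(\DA^{-1}T)$; enlarging $T_0$ if necessary (using $\sigma\to\infty$), this keeps $f(x),f(y)$ outside $\bar B(b,2\lambda'\theta'(0)+k')$, so by condition~(\ref{qconn})$^q$ for $Y$ we may choose $\zeta,\xi\in\xcalL{Y}_b$ with $\zeta(L_\zeta)=f(x)=f(\gamma(L_\gamma))$ and $\xi(L_\xi)=f(y)=f(\eta(L_\eta))$; being $(\lambda',k')$-quasi-geodesics, $L_\zeta,L_\xi\geq\lambda'^{-1}(\sigma(\DA^{-1}T)-k')$. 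Since $\rho$ is a non-decreasing rough contraction with $\rho(t)\to\infty$, the function $g(T):=\sup\{n\in\N:\rho(n)\leq\lambda'^{-1}(\sigma(\DA^{-1}T)-k')\}$ (read as $0$ when the set is empty) is metrically proper and satisfies $\rho(g(T))\leq\lambda'^{-1}(\sigma(\DA^{-1}T)-k')\leq\min\{L_\zeta,L_\xi\}$. Set $t^*:=\min\{\DA^{-1}T,\,g(T)\}$. Then $t^*\leq\min\{L_\gamma,L_\eta\}$ and $\rho(t^*)\leq\min\{L_\zeta,L_\xi\}$, so $\rho$-$H$-$\xcalL{X}_a$-$\xcalL{Y}_b$-equivariance, applied to the pairs $(\gamma,\zeta)$ and $(\eta,\xi)$, gives $\ds{f(\gamma(t^*)),\zeta(\rho(t^*))}<H$ and $\ds{f(\eta(t^*)),\xi(\rho(t^*))}<H$. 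Combining with the estimate from the first paragraph,
\begin{align*}
 \ds{\zeta(\rho(t^*)),\xi(\rho(t^*))}
 &\leq\ds{\zeta(\rho(t^*)),f(\gamma(t^*))}+\ds{f(\gamma(t^*)),f(\eta(t^*))}+\ds{f(\eta(t^*)),\xi(\rho(t^*))}\\
 &\leq 2H+\lambda_1\DA+\nu_1=:R.
\end{align*}

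Finally, I would feed this into Lemma~\ref{lem:prod-est-bigon} for $Y$, applied to $\zeta,\xi$ at the parameter $\rho(t^*)$ with $\alpha:=\ds{\zeta(\rho(t^*)),\xi(\rho(t^*))}\leq R$: using that $\tilde{\theta}'$ is non-decreasing we obtain
\begin{align*}
 (\zeta\mid\xi)_b\geq\frac{\rho(t^*)-\tilde{\theta}'(R)}{E'\bigl(R+\lambda'\tilde{\theta}'(R)+k_1'\bigr)},
\end{align*}
and Lemma~\ref{lem:univ-const}(\ref{lem:D3}) for $Y$ gives $(f(x)\mid f(y))_b\geq(\zeta\mid\xi)_b$. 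Hence the map
\begin{align*}
 \hat\rho(T):=\left(\frac{\rho(\min\{\DA^{-1}T,\,g(T)\})-\tilde{\theta}'(R)}{E'\bigl(R+\lambda'\tilde{\theta}'(R)+k_1'\bigr)}\right)\vee 0\quad(T\geq T_0),\qquad\hat\rho(T):=0\quad(T<T_0),
\end{align*}
satisfies $(f(x)\mid f(y))_b\geq\hat\rho((x\mid y)_a)$ for all $x,y\in X$; and it is metrically proper, because $R$ is a constant, $g$ is metrically proper, and $\rho$ is a rough contraction with $\rho(t)\to\infty$, so $\rho(\min\{\DA^{-1}T,g(T)\})\to\infty$ as $T\to\infty$. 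Visuality of $f$ follows at once: if $(x_n\mid y_n)_a\to\infty$ then $(f(x_n)\mid f(y_n))_b\geq\hat\rho((x_n\mid y_n)_a)\to\infty$.

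The step I expect to be the main obstacle is the middle one: ensuring that the comparison parameter $t^*$, at which the images of $\gamma$ and $\eta$ are compared through $\zeta$ and $\xi$, really does tend to infinity with $T$, and that every object involved remains in the range where the cited lemmas apply. This is exactly where $\sigma$-radiality is indispensable — it forces $L_\zeta,L_\xi$ (hence the admissible interval in the equivariance inequality) to grow with $T$ and keeps $f(x),f(y)$ away from $b$; without it the equivariance estimate could be vacuous. The remaining bookkeeping (the precise value of $T_0$, the interpretation of $\rho^{-1}(L_\zeta)$, and the minor genericity conditions on $\gamma,\eta,\zeta,\xi$) is routine.
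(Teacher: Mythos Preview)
Your proof is correct and follows essentially the same route as the paper's: pick good quasi-geodesics to $x,y$, use Lemma~\ref{lem:univ-const}(\ref{lem:maximizer}) and large scale Lipschitzness to control the images up to parameter $\DA^{-1}(x\mid y)_a$, use $\sigma$-radiality to bound the lengths of the image quasi-geodesics from below, use equivariance at a suitably chosen parameter, and convert the resulting proximity in $Y$ into a product lower bound. The only cosmetic differences are that you package the final convexity step through Lemma~\ref{lem:prod-est-bigon} where the paper applies the coarsely convex inequality of Proposition~\ref{prop:qgeod-ray-convex} directly, and that your choice $t^*=\min\{\DA^{-1}T,g(T)\}$ replaces the paper's auxiliary function $\delta$; these achieve the same control.
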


\begin{proof}
 Let $\gamma,\eta\in \xcalL{X}_a$, be
 a good quasi-geodesics joining the base point $a$ and $x,y$, 
 respectively.
 Set $[0,L_\gamma]=\Dom \gamma$, $[0,L_{\eta}]=\Dom \eta$.
 We remark that $(\gamma\mid \eta)_a\geq \DA^{-1}(x\mid y)_a$.

 Let $\gamma',\eta'\in \xcalL{Y}_b$  be
 a good quasi-geodesics joining the base point $b$ and $f(x),f(y)$,
 respectively.
 Set $[0,L_{\gamma'}]=\Dom \gamma'$, $[0,L_{\eta'}]=\Dom \eta'$.
 We have
 \begin{align*}
  \min\{L_\gamma,L_\eta\} \geq (\gamma\mid \eta)_a\geq \DA^{-1}(x\mid y)_a.
 \end{align*}
 Since $\gamma'$ is $(\lambda',k')$-quasi geodesic and 
 $f$ is $\sigma$-$\xcalL{X}_a$-radial, it follows that
 \begin{align*}
  \lambda'L_{\gamma'} + k'&\geq \ds{b,f(x)} \geq \sigma(L_\gamma)
  \geq \sigma(\DA^{-1}(x\mid y)).
 \end{align*}
 By switching $x$ and $y$, and applying the same argument, we obtain
 \begin{align*}
  \lambda'L_{\eta'} + k' \geq \sigma(\DA^{-1}(x\mid y)_a).
   \end{align*}
 So we have
 \begin{align}
  \label{eq:3}
  \min\{L_{\gamma'},L_{\eta'}\}\geq 
  \frac{1}{\lambda'}\sigma(\DA^{-1}(x\mid y)_a) -\frac{k'}{\lambda'}.
 \end{align} 

 Here we introduce the following constants,
 \begin{align*}
  T:=\lambda_1\DA + \nu_1 +2H,\quad c:=(E'T)^{-1}
 \end{align*}
 and we define a function $\delta\colon \Rp\to \Rp$ by
 \begin{align*}
  \delta(t):=\sup\left\{u\in\Rp: \rho(u)\leq   
  \left(\frac{1}{\lambda'}\min\{\sigma(\DA^{-1}t),\,\rho(\DA^{-1}t)\}  
  -\frac{k'}{\lambda'}-\tau \right)\vee 0\right\}.
 \end{align*}
 We remark that, since $\rho$ is a  $\tau$-rough contraction, 
 for any $\alpha\in \Rp$, we have
 \begin{align*}
  \rho(\sup\{u\in \Rp: \rho(u)\leq \alpha\}) \leq \alpha +\tau.
 \end{align*}

 We will show that 
 \begin{align}
  \label{eq:8}
  (f(x)\mid f(y))\geq c\left\{\rho\circ \delta((x\mid y)_a) -\tau\right\}.
 \end{align}
 We set 
 \begin{align*}
  S&:= \frac{1}{\lambda'}
  \min\{\sigma(\DA^{-1}(x\mid y)_a),\,\rho(\DA^{-1}(x\mid y)_a)\}
  -\frac{k'}{\lambda'},\\
 s&:=\delta((x\mid y)_a) = \sup\{u \in\Rp : \rho(u)\leq (S-\tau)\vee 0\}.
 \end{align*}
 Then we have $\rho(s)\leq S\vee \tau$ and 
 $\rho\circ \delta((x\mid y)_a)=\rho(s)$.

 First we assume that $S<\tau$. Then $\rho\circ \delta((x\mid y)_a) \leq \tau$,
 so clearly (\ref{eq:8}) holds, since $(f(x)\mid f(y))\geq 0$.
 
 Now we assume that $S\geq \tau$. 
 Since $\rho(s)\leq S$ and $\lambda'\geq 1$, we have 
 \begin{align*}
  \rho(s)\leq \rho(\DA^{-1}(x\mid y)_a)\leq \rho((\gamma\mid \eta)_a). 
 \end{align*}
 Thus
 \begin{align*}
  s\leq (\gamma\mid \eta)_a\leq \min\{L_\gamma,L_\eta\}.
 \end{align*}
 By~(\ref{eq:3}) and $\rho(s)\leq S$, we have 
 \begin{align*}
  0\leq \rho(s)\leq \min\{L_{\gamma'},L_{\eta'}\}.
 \end{align*}
 Since $f$ is 
 $\rho$-$H$-$\xcalL{X}_a$-$\xcalL{Y}_b$-equivariant, 
 we have
 \begin{align*}
 \max\left\{\ds{\gamma'(\rho(s)),f\circ \gamma(s)}, 
  \ds{f\circ \eta(s),\eta'(\rho(s))}\right\}<H.
 \end{align*}
 By Lemma~\ref{lem:univ-const}~(\ref{lem:maximizer})
 we have
 \begin{align*}
  \ds{\gamma(s),\eta(s)}\leq \DA.
 \end{align*}
Thus,
 \begin{align*}
  \ds{\gamma'(\rho(s)),\eta'(\rho(s))}
  &\leq \ds{\gamma'(\rho(s)),f\circ \gamma(s)} +
  \ds{f\circ \gamma(s),f\circ \eta(s)} + \ds{f\circ \eta(s),\eta'(\rho(s))}\\
  &\leq H + \lambda_1\ds{\gamma(s),\eta(s)} +\nu_1 + H\\
  &\leq \lambda_1\DA + \nu_1 +2H=T.
 \end{align*}
 Since $c=1/(E'T)$, we have
 \begin{align*}
  \ds{\gamma'(c\rho(s)),\eta'(c\rho(s))}\leq 1 +C'
  \leq 2D' + 2.
 \end{align*}
 Thus we have 
 \begin{align*}
  (\gamma'\mid \eta')_b\geq c\rho(s) = c\rho\circ \delta((x\mid y)_a).
 \end{align*}
 Therefore (\ref{eq:8}) holds.
\end{proof}

\begin{lemma}
\label{lem:partial-f-kapp}
Let $f\colon X\to Y$ be a map.
 Suppose that there exists a non-decreasing map 
 $\hat{\rho}\colon \Rp\to \Rp$ such that for 
 all $x,y\in X$, $(f(x)\mid f(y))_b\geq \hat{\rho}((x\mid y)_a)$.
 Then for all $u,v\in \partial X$,
 we have 
 \begin{align*}
  (\partial f(u)\mid \partial f(v))_b\geq 
  \DA^{-2}\hat{\rho}(\DA^{-2}(u\mid v)_a).
 \end{align*}
\end{lemma}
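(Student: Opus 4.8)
The plan is to deduce the boundary inequality from the hypothesis at the level of points of $X$ by approximating $u$ and $v$ along representing quasi-geodesic rays and invoking the quasi-ultrametric inequality for the Gromov product (Lemma~\ref{lem:univ-const}~(\ref{lem:qrayultmvisu})) at both ends. Concretely, fix $u,v\in\partial X$ and choose representatives $\gamma\in u$ and $\eta\in v$ with $\gamma,\eta\in\xcalL{X}_a^\infty$; put $x_n:=\gamma(n)$ and $y_n:=\eta(n)$. Since $x_n\in\Im \gamma$ and $y_n\in\Im \eta$, Lemma~\ref{lem:associate} gives $(x_n\mid u)_a\to\infty$ and $(y_n\mid v)_a\to\infty$, and by the construction of $\partial f$ in the proof of Proposition~\ref{prop:visual-extension} together with Lemma~\ref{lem:tendstoinfty} and the definition of the topology on $Y\cup\partial Y$ we have $(f(x_n)\mid\partial f(u))_b\to\infty$ and $(f(y_n)\mid\partial f(v))_b\to\infty$.

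First I would push the product down into $X$. Applying Lemma~\ref{lem:univ-const}~(\ref{lem:qrayultmvisu}) twice and using $\DA\geq 1$,
\[
(x_n\mid y_n)_a \ \geq\ \DA^{-2}\min\{(x_n\mid u)_a,\,(u\mid v)_a,\,(v\mid y_n)_a\};
\]
since the first and third entries tend to infinity, for all sufficiently large $n$ this minimum equals $(u\mid v)_a$ (and tends to infinity when $(u\mid v)_a=\infty$), so $(x_n\mid y_n)_a\geq \DA^{-2}(u\mid v)_a$. By the hypothesis on $f$ and the monotonicity of $\hat{\rho}$ we then get
\[
(f(x_n)\mid f(y_n))_b \ \geq\ \hat{\rho}\big((x_n\mid y_n)_a\big)\ \geq\ \hat{\rho}\big(\DA^{-2}(u\mid v)_a\big).
\]
Finally I would push the product back up in $Y$: again by Lemma~\ref{lem:univ-const}~(\ref{lem:qrayultmvisu}),
\[
(\partial f(u)\mid\partial f(v))_b \ \geq\ \DA^{-2}\min\{(\partial f(u)\mid f(x_n))_b,\,(f(x_n)\mid f(y_n))_b,\,(f(y_n)\mid\partial f(v))_b\},
\]
and since the first and third entries tend to infinity while the middle one is bounded below by $\hat{\rho}(\DA^{-2}(u\mid v)_a)$, taking $n$ large gives $(\partial f(u)\mid\partial f(v))_b\geq \DA^{-2}\hat{\rho}(\DA^{-2}(u\mid v)_a)$, as claimed.

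There is no serious obstacle here; the argument is a short diagram chase with the quasi-ultrametric inequality, and the only points requiring a little care are choosing $n$ large enough for all the divergence estimates to hold simultaneously, and the degenerate case $(u\mid v)_a=\infty$, in which $u=v$, hence $\partial f(u)=\partial f(v)$ and both sides are $+\infty$ (interpreting $\hat{\rho}(\infty)$ as the monotone limit).
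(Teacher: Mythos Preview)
Your proof is correct and follows essentially the same route as the paper's own proof: choose representatives, push the product down to $X$ via two applications of the quasi-ultrametric inequality, apply the hypothesis and monotonicity of $\hat{\rho}$, then push back up to $\partial Y$ via two more applications. Your treatment is slightly more careful in citing Lemma~\ref{lem:associate} for $(x_n\mid u)_a\to\infty$ and in handling the degenerate case $(u\mid v)_a=\infty$, but the argument is the same.
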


\begin{proof}
 Let $u,v\in \partial_a X$.
 We choose representatives $\gamma\in u$ and $\eta\in v$. For $n\in \N$,
 set $x_n:=\gamma(n)$ and $y_n:=\eta(n)$. 
 We have
 \begin{align*}
  (x_n\mid y_n)_{a}\geq \DA^{-2}\min \{(x_n\mid u)_{a}, (u\mid v)_{a}, (v\mid y_n)_{a}\}
 \end{align*}
 Since $(x_n\mid u)_{a}\to \infty$ and $(v\mid y_n)_{a}\to \infty$,
 we have 
 $(x_n\mid y_n)_{a}\geq \DA^{-2}(u\mid v)_{a}$
 for sufficiently large $n$.
 Then $(f(x_n)\mid f(y_n))_{b}\geq \hat{\rho}(\DA^{-2}(u\mid v)_a)$. 
 Now we have
 \begin{align*}
  (\partial f(u)\mid \partial f(v))_{b}\geq \DA^{-2}
  \min \{(\partial f(u)\mid f(x_n))_{b},\, 
  (f(x_n)\mid f(y_n))_{b},\, (f(y_n)\mid \partial f(v))_{b}\}.
 \end{align*}
 Since $(\partial f(u)\mid f(x_n))_{b}\to \infty$ and 
 $(f(y_n)\mid \partial f(v))_{b}\to \infty$,
 we have 
 \begin{align*}
 (\partial f(u)\mid \partial f(v))_{b}> 
  \DA^{-2} \hat{\rho}(\DA^{-2}(u\mid v)_{a}).
 \end{align*}
\end{proof}

We say that a map $\sigma\colon \Rp\to\Rp$ is a \textit{rough-isometric contraction}
if it is a rough contraction and there exist  constants $A,B\geq 0$ such that
$\abs{\sigma(t)-\sigma(s)}\geq A\abs{t-s} -B$ for all $s,t\in \Rp$.

\begin{corollary}
\label{cor:visu-Lip}
 Let $\sigma,\rho\colon \R\to \R$ be rough-isometric contractions. 
 Let $f\colon X\to Y$ be a 
 large scale Lipschitz, $\sigma$-$\xcalL{X}_a$-radial,  
 and $\rho$-$H$-$\xcalL{X}_a$-$\xcalL{Y}_b$ 
 map satisfying $f(a) = b$. Then the induced map
 $\partial f\colon \partial X\to \partial Y$ is a Lipschitz map.
\end{corollary}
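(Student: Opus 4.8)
The plan is to feed the distortion estimate of Theorem~\ref{thm:visual-Lip} into the metric comparison of Proposition~\ref{prop:q-met2met}, after first upgrading the distortion function supplied by the theorem to a linear one using the rough-isometric hypothesis. Throughout I fix $\epsilon>0$ small enough that Proposition~\ref{prop:q-met2met} furnishes metrics $\dbdl{\epsilon}{\partial X}$ and $\dbdl{\epsilon}{\partial Y}$, with respective constants $K_X,K_Y\in(0,1)$, and the goal is to show that $\partial f\colon(\partial X,\dbdl{\epsilon}{\partial X})\to(\partial Y,\dbdl{\epsilon}{\partial Y})$ is Lipschitz.

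First I would observe that $f$ satisfies the hypotheses of Theorem~\ref{thm:visual-Lip}: it is large scale Lipschitz, $\sigma$-$\xcalL{X}_a$-radial and $\rho$-$H$-$\xcalL{X}_a$-$\xcalL{Y}_b$-equivariant with $f(a)=b$, and $\sigma,\rho$ are in particular $\tau$-rough contractions for a common $\tau$. Hence the theorem provides a metrically proper $\hat\rho$, which from its construction I may take non-decreasing, with $(f(x)\mid f(y))_b\geq\hat\rho((x\mid y)_a)$ and $\hat\rho(t)\geq c\bigl(\rho(\delta(t))-\tau\bigr)$, where $c=(E'T)^{-1}>0$ and $\delta$ is the non-decreasing function defined in that proof. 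Now I would exploit that $\sigma,\rho$ are rough-isometric contractions: putting $s=0$ in the reverse inequality and using $\sigma(0)=\rho(0)=0$ yields $a_0>0$, $b_0\geq0$ with $\sigma(t)\geq a_0t-b_0$ and $\rho(t)\geq a_0t-b_0$ for all $t$. Since $\rho(u)\leq u$, the set whose supremum defines $\delta(t)$ contains the interval $[0,\beta(t)]$ with $\beta(t)=\bigl(\tfrac1{\lambda'}\min\{\sigma(\DA^{-1}t),\rho(\DA^{-1}t)\}-\tfrac{k'}{\lambda'}-\tau\bigr)\vee0$, so $\delta(t)\geq\beta(t)\geq a_1t-b_1$ for suitable $a_1>0$, $b_1\geq0$; applying the linear lower bound for $\rho$ once more gives $\rho(\delta(t))\geq a_0(a_1t-b_1)-b_0$, hence $\hat\rho(t)\geq At-B$ for some $A>0$, $B\geq0$.

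Next I would apply Lemma~\ref{lem:partial-f-kapp} (valid since $\hat\rho$ is non-decreasing) to obtain, for all $u,v\in\partial X$, $(\partial f(u)\mid\partial f(v))_b\geq\DA^{-2}\hat\rho(\DA^{-2}(u\mid v)_a)\geq A'(u\mid v)_a-B'$ with $A'=A\DA^{-4}>0$. To translate this into a Lipschitz bound I would use compactness of $\partial X$: with $M_X=\diam(\partial X,\dbdl{\epsilon}{\partial X})<\infty$, the relation $\tfrac1{K_X}(u\mid v)_a^{-\epsilon}\leq\dbdl{\epsilon}{\partial X}(u,v)\leq M_X$ forces a uniform bound $(u\mid v)_a\geq\delta_0:=(K_XM_X)^{-1/\epsilon}>0$. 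For distinct $u,v$ I split into two regimes. If $(u\mid v)_a\geq 2B'/A'$ then $(\partial f(u)\mid\partial f(v))_b\geq\tfrac{A'}{2}(u\mid v)_a>0$, whence
\[
 \dbdl{\epsilon}{\partial Y}(\partial f(u),\partial f(v))\leq(\partial f(u)\mid\partial f(v))_b^{-\epsilon}\leq(A'/2)^{-\epsilon}(u\mid v)_a^{-\epsilon}\leq(A'/2)^{-\epsilon}K_X\,\dbdl{\epsilon}{\partial X}(u,v).
\]
If instead $\delta_0\leq(u\mid v)_a<2B'/A'$, then $\dbdl{\epsilon}{\partial X}(u,v)\geq\tfrac1{K_X}(2B'/A')^{-\epsilon}=:m_0>0$ while $\dbdl{\epsilon}{\partial Y}(\partial f(u),\partial f(v))\leq M_Y:=\diam(\partial Y,\dbdl{\epsilon}{\partial Y})<\infty$, so the ratio is at most $M_Y/m_0$. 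Taking $L=\max\{(A'/2)^{-\epsilon}K_X,\,M_Y/m_0\}$ then gives $\dbdl{\epsilon}{\partial Y}(\partial f(u),\partial f(v))\leq L\,\dbdl{\epsilon}{\partial X}(u,v)$ for all $u,v\in\partial X$.

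The hard part is the linear-distortion step: I must check that the positive multiplicative constants of the rough-isometric contractions are not lost when passing through the quasi-inverse of $\rho$ hidden inside $\delta$ and then composing with $\rho$ once more, so that $\hat\rho$ stays linear instead of degenerating to a sublinear, hence only H\"older-type, bound as in the general visual case of Dydak and Virk. Once that is secured, what remains is just the two-regime split forced by the additive constant $B'$, together with the ultrametric inequalities already packaged into Lemma~\ref{lem:partial-f-kapp} and the compactness of the boundaries.
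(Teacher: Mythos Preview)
Your proof is correct and follows essentially the same approach as the paper: both extract a linear lower bound $(\partial f(u)\mid\partial f(v))_b\geq \kappa(u\mid v)_a-\nu$ from the explicit form of $\hat\rho$ in the proof of Theorem~\ref{thm:visual-Lip} together with Lemma~\ref{lem:partial-f-kapp}, and then split into a ``large product'' regime (where the linear bound absorbs the additive constant) and a ``small product'' regime (handled by compactness of the boundaries). Your write-up is in fact more explicit than the paper's about how the rough-isometric hypothesis survives the passage through $\delta$ to give the linear $\hat\rho$; the paper simply asserts that step.
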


\begin{proof}
 Let $\epsilon>0$ be a sufficiently small constant and,
 $\dbdl{\epsilon}{\partial X}$ and $\dbdl{\epsilon}{\partial Y}$ be 
 a metric on $\partial X$ and on $\partial Y$, respectively, constructed 
 in Proposition~\ref{prop:q-met2met}. Let $K$ be a constant 
 satisfying the statement of 
 Proposition~\ref{prop:q-met2met} for both $\partial X$ and $\partial Y$.

 Since $\sigma$ and $\rho$ are rough-isometric contractions,
 by the proof of Theorem~\ref{thm:visual-Lip} and 
 Lemma~\ref{lem:partial-f-kapp},
 there exists constants $\kappa,\nu$ such that for all
 $x,y\in \partial X$, we have 
 \begin{align*}
  (\partial f(x)\mid \partial f(y))_b\geq \kappa (x\mid y)_a -\nu.
 \end{align*}

 We use the following constant
 \begin{align*}
 \displaystyle V:= \frac{1}{K}
  \left(\frac{\kappa}{2\nu}\right)^{\epsilon}. 
 \end{align*}
 For $x,y\in \partial X$ with 
 $\dbdl{\epsilon}{\partial X}(x,y)\geq V$, we have
 \begin{align*}
  \dbdl{\epsilon}{\partial Y}(\partial f(x),\partial f(y))\leq \diam \partial Y\leq 
  \frac{\diam \partial Y}{V}\dbdl{\epsilon}{\partial X}(x,y).
 \end{align*}
 For $x,y\in \partial X$ with  $\dbdl{\epsilon}{\partial X}(x,y)\leq  V$, 
 we have
 \begin{align*}
  (x\mid y)_a \geq \left(\frac{1}{K\dbdl{\epsilon}{\partial X}(x,y)}\right)^{1/\epsilon}
  \geq \left(\frac{1}{KV}\right)^{1/\epsilon}
  \geq \frac{2\nu}{\kappa}.
 \end{align*}
 Thus $\nu\leq (1/2)\kappa (x\mid y)_b$. 
 By Theorem~\ref{thm:visual-Lip},
 \begin{align*}
  \dbdl{\epsilon}{\partial Y}(\partial f(x),\partial f(x')) &\leq 
  \left(\frac{1}{(f(x)\mid f(y))_b}\right)^\epsilon 
  \leq   \left(\frac{1}{\kappa (x\mid y)_a -\nu}\right)^\epsilon \\
  &\leq \left(\frac{2}{\kappa (x\mid y)_a}\right)^\epsilon 
  \leq K\left(\frac{2}{\kappa}\right)^\epsilon \dbdl{\epsilon}{\partial X}(x,x').
 \end{align*}
\end{proof}

\begin{definition}
\label{def:LLoo-radial}
 Let $\rho\colon \Rp\to \Rp$ be a rough contraction.
 Let $f\colon X\to Y$ be a map.
 Let $T>0$. We say that $f$ is 
 $\rho$-$T$-$\xcalL{X}_a^\infty$-$\xcalL{Y}_b^\infty$-\textit{equivariant} if $f(a) = b$, and
 for $\gamma\in \xcalL{X}_a^\infty$
 there exists $\eta\in \xcalL{Y}_b^\infty$ such that
 \begin{align*}
  \ds{f\circ \gamma(t),\eta(\rho(t))}< T \quad (\forall t\in \Rp).
 \end{align*}
\end{definition}

\begin{lemma}
 \label{lem:LLoo-f} 
 Let $\rho\colon \Rp\to \Rp$ be a rough contraction.
 Let $f\colon X\to Y$ be a visual map.
 Let $\gamma\in \xcalL{X}_a^\infty$ and $\eta\in \xcalL{Y}_b^\infty$
 be quasi-geodesic rays
 satisfying $\ds{f\circ \gamma(t),\eta(\rho(t))}< T$ 
 for all $t\in \Rp$.
 Then we have $\partial f([\gamma]) = [\eta]$.
\end{lemma}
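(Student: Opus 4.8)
\emph{Proof proposal.} The plan is to identify $\partial f([\gamma])$ explicitly and show it equals $[\eta]$. By the definition of the induced map, $\partial f([\gamma])=y$, where $y:=\lim_{n\to\infty}f(\gamma(n))\in\partial Y$; this limit exists in $\partial Y$ precisely because $f$ is visual (Proposition~\ref{prop:visual-extension}). Thus it suffices to prove $(y\mid[\eta])_b=\infty$: indeed, by Proposition~\ref{prop:q-met2met} this forces $\dbdl{\epsilon}{\partial Y}(y,[\eta])\le (y\mid[\eta])_b^{-\epsilon}=0$, hence $y=[\eta]$.

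First I would control the sequence $\bigl(f(\gamma(n))\bigr)_n$ against the ray $\eta$. For every $n$ we have
\[
 \ds{f(\gamma(n)),\Im\eta}\ \le\ \ds{f(\gamma(n)),\eta(\rho(n))}\ <\ T,
\]
so $\limsup_{n\to\infty}\ds{f(\gamma(n)),\Im\eta}\le T<\infty$. Applying Lemma~\ref{lem:associate} in the space $Y$ with base point $b$, to the quasi-geodesic ray $\eta\in\xcalL{Y}_b^\infty$ and the sequence $\bigl(f(\gamma(n))\bigr)_n$, we conclude that $\bigl(f(\gamma(n))\mid[\eta]\bigr)_b\to\infty$ as $n\to\infty$.

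Finally I would combine this with the convergence $f(\gamma(n))\to y$ in $Y\cup\partial Y$, which by the definition of the fundamental neighbourhoods $V_m(y)$ means exactly $\bigl(f(\gamma(n))\mid y\bigr)_b\to\infty$. The quasi-ultrametric inequality, Lemma~\ref{lem:univ-const}~(\ref{lem:qrayultmvisu}), gives
\[
 \bigl(y\mid[\eta]\bigr)_b\ \ge\ \DA^{-1}\min\Bigl\{\bigl(y\mid f(\gamma(n))\bigr)_b,\ \bigl(f(\gamma(n))\mid[\eta]\bigr)_b\Bigr\}
\]
for every $n$, and letting $n\to\infty$ the right-hand side tends to $\infty$; hence $(y\mid[\eta])_b=\infty$, so $y=[\eta]$, i.e. $\partial f([\gamma])=[\eta]$. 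There is no serious obstacle: the argument is a direct assembly of the definition of $\partial f$, Lemma~\ref{lem:associate}, and the quasi-ultrametric inequality. The only point requiring a moment's care is that Lemma~\ref{lem:associate}, stated for a generic base point $O$ in a coarsely convex space, is being invoked in $Y$ with base point $b$, which is legitimate since $Y$ is itself a proper coarsely convex space carrying the same structure.
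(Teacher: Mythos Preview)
Your proof is correct and follows essentially the same route as the paper's: both set $x_n=\gamma(n)$, use Lemma~\ref{lem:associate} (from $\ds{f(x_n),\eta(\rho(n))}<T$) to get $(f(x_n)\mid[\eta])_b\to\infty$, use that $f(x_n)\to\partial f([\gamma])$ to get $(f(x_n)\mid\partial f([\gamma]))_b\to\infty$, and finish with the quasi-ultrametric inequality of Lemma~\ref{lem:univ-const}~(\ref{lem:qrayultmvisu}). The only cosmetic difference is that the paper cites Corollary~\ref{cor:visu-bdl-cont} for the convergence $f(x_n)\to\partial f([\gamma])$ whereas you unpack it directly from the definition of $\partial f$.
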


\begin{proof}
 For $n\in \N$, set $x_n:=\gamma(n)$. By Lemma~\ref{lem:associate},
 $x_n\to [\gamma]$.
 By Corollary~\ref{cor:visu-bdl-cont}, $f(x_{n})\to \partial f([\gamma])$,
 that is,  $(f(x_{n})\mid \partial f([\gamma]))_b\to \infty$. 
 By the assumption of $\gamma$ and $\eta$, for 
 all $n\in \N$, $\ds{f(x_n),\eta(\rho(n))}<T$. 
 Thus by Lemma~\ref{lem:associate}, 
 $(f(x_{n})\mid [\eta])_b\to \infty$. Then
 \begin{align*}
  (\partial f([\gamma])\mid [\eta])_b\geq 
  \DA^{-1}\min\{(\partial f([\gamma])\mid f(x_{n}))_b, 
  (f(x_{n})\mid [\eta])_b\}\to \infty.
 \end{align*}
 Therefore $\partial f([\gamma])=[\eta]$.
\end{proof}

\begin{lemma}
 \label{lem:LL-LLoo}
 For $H>0$ there exists 
 $T>0$ such that if a large scale Lipschitz map $f\colon X\to Y$ is
 $\xcalL{X}$-radial and 
 $\rho$-$H$-$\xcalL{X}_a$-$\xcalL{Y}_b$-equivariant, then it
 is $\rho$-$T$-$\xcalL{X}_a^\infty$-$\xcalL{Y}_b^\infty$-equivariant.
\end{lemma}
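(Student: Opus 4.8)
The plan is to take a good quasi-geodesic ray $\gamma\in\xcalL{X}_a^\infty$, use $\calL$-approximation to reduce to the finite case where the hypothesis $\rho$-$H$-$\xcalL{X}_a$-$\xcalL{Y}_b$-equivariance applies, and then pass to the limit. First I would fix $\gamma\in\xcalL{X}_a^\infty$ and choose an $\calL$-approximate sequence $\{(\gamma_n,a_n)\}_n$ for $\gamma$; each $\gamma_n\in\xcalL{X}_a$ with $a_n\to\infty$, $\gamma_n(0)=a$, and $\gamma_n\to\gamma$ pointwise on $\N$. For each $n$ pick $\eta_n\in\xcalL{Y}_b$ joining $b$ and $f(\gamma_n(a_n))$, with $\Dom\eta_n=[0,L_n]$. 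Because $f$ is $\xcalL{X}$-radial and $\eta_n$ is a $(\lambda',k')$-quasi-geodesic, $L_n$ is bounded below by (a linear function of) $\sigma(a_n)\to\infty$, so $L_n\to\infty$; and because $f$ is $\rho$-$H$-$\xcalL{X}_a$-$\xcalL{Y}_b$-equivariant, $\ds{f\circ\gamma_n(t),\eta_n(\rho(t))}<H$ for all $0\le t\le\min\{a_n,\rho^{-1}(L_n)\}$, an interval whose right endpoint also tends to infinity.

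Next I would extract a limiting ray on the $Y$ side. Since $\eta_n(0)=b$ and $L_n\to\infty$, the Ascoli--Arzelà-type statement Lemma~\ref{lem:Ascoli-Arzela} (applied to, say, the sequence $\eta_n(\rho(n))$, whose distance to $b$ goes to infinity by $\xcalL{X}$-radiality of $f$ together with the large scale Lipschitz bound) produces $\eta\in\xcalL{Y}_b^\infty$ and a subsequence along which $(\eta_n(\rho(n))\mid[\eta])_b\to\infty$; passing to this subsequence I may assume $\eta_n\to\eta$ pointwise on $\N$, so $\{(\eta_n,L_n)\}_n$ is an $\calL$-approximate sequence for $\eta$. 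The $T$ to be produced will be built from $H$, $D'$, the large scale Lipschitz constants, $\lambda',k'$ and $\DA$ — crucially \emph{not} from $\gamma$ — which is exactly the uniformity the statement demands.

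Then I would establish the uniform estimate $\ds{f\circ\gamma(t),\eta(\rho(t))}<T$ for every fixed $t\in\Rp$. For a given integer $m\ge t$, for all large $n$ we have $m\le\min\{a_n,\rho^{-1}(L_n)\}$, so $\ds{f\circ\gamma_n(m),\eta_n(\rho(m))}<H$; then $\ds{f\circ\gamma(m),\eta(\rho(m))}\le \ds{f\circ\gamma(m),f\circ\gamma_n(m)}+H+\ds{\eta_n(\rho(m)),\eta(\rho(m))}$, and the two outer terms are controlled: the first by bornologousness of $f$ together with $\gamma_n(m)\to\gamma(m)$, the second by Lemma~\ref{lem:univ-const}~(\ref{lem:maximizer}) via $(\eta_n\mid\eta)_b\to\infty$ (so the distance is eventually $\le\DA$) — hence $\ds{f\circ\gamma(m),\eta(\rho(m))}\le H+\DA$ after letting $n\to\infty$. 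For non-integer $t$ one uses $\gamma(t)=\gamma(\lfloor t\rfloor)$, the $\tau$-rough-contraction bound $\abs{\rho(t)-\rho(\lfloor t\rfloor)}\le 1+\tau$, and the quasi-geodesic bound on $\eta$ to move from $\rho(\lfloor t\rfloor)$ to $\rho(t)$, absorbing the resulting additive error; set $T:=H+\DA+\lambda'(1+\tau)+k'+1$ (or whatever clean constant the bookkeeping yields). This gives $f$ being $\rho$-$T$-$\xcalL{X}_a^\infty$-$\xcalL{Y}_b^\infty$-equivariant, since $\eta\in\xcalL{Y}_b^\infty$ was produced for the arbitrary $\gamma$.

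The main obstacle I anticipate is the \emph{uniformity} of $T$ together with the bookkeeping needed to guarantee $\min\{a_n,\rho^{-1}(L_n)\}\to\infty$: one must be careful that $\rho^{-1}(L_n)$ genuinely tends to infinity, which relies on $L_n\to\infty$ (from $\sigma$-radiality and $\sigma(t)\to\infty$) and on $\rho$ being a rough contraction so that $\rho^{-1}$ of a quantity tending to infinity tends to infinity. A secondary technical point is ensuring the limit ray $\eta$ lies in $\xcalL{Y}_b^\infty$ rather than merely being a pointwise limit; this is exactly what Lemma~\ref{lem:Ascoli-Arzela} delivers, so the argument should go through by invoking it rather than reproving a diagonal extraction. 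The visual hypothesis is only needed implicitly (via metric properness, Proposition~\ref{prop:visual-proper}) to know $L_n$ is finite and the constructions make sense; the quantitative content is carried entirely by radiality and $\rho$-$H$-equivariance.
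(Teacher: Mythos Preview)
Your approach is essentially the paper's: approximate $\gamma$ by $\gamma_n\in\xcalL{X}_a$, pick $\eta_n\in\xcalL{Y}_b$ from $b$ to $f(\gamma_n(a_n))$, use radiality to get $\ds{b,f(\gamma_n(a_n))}\to\infty$, extract a limiting ray $\eta\in\xcalL{Y}_b^\infty$ via Lemma~\ref{lem:Ascoli-Arzela}, and run the three-term triangle inequality with Lemma~\ref{lem:univ-const}(\ref{lem:maximizer}) controlling the outer terms. One small bookkeeping slip: $\ds{f\circ\gamma(m),f\circ\gamma_n(m)}$ does not tend to $0$ for a merely large scale Lipschitz $f$, so your stated $T$ is missing that contribution; the paper instead bounds $\ds{\gamma(t),\gamma_n(t)}\le\DA$ for all $t\le(\gamma\mid\gamma_n)_a$ via Lemma~\ref{lem:univ-const}(\ref{lem:maximizer}) and hence gets $\lambda_1\DA+\nu_1$ for that term, arriving at $T=\lambda_1\DA+\nu_1+H+\DA$ --- with that fix the detour through integer $m$ is unnecessary and the estimate holds for every $t\in\Rp$ directly.
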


\begin{proof}
 Let $\gamma\in \xcalL{X}_a^\infty$. There exists $\xcalL{X}$-approximate sequence 
 $\{(\gamma_n,\alpha'_n)\}$ of $\gamma$. Set $\alpha_n:=(\gamma\mid \gamma_n)_a$. 
 By Lemma~\ref{lem:univ-const}~(\ref{lem:maximizer})
 for all $t\in [0,\alpha_n]$, we have
 \begin{align*}
  \ds{\gamma(t),\gamma_n(t)}\leq \DA.
 \end{align*}
 We suppose that  $f$ is a $(\lambda_1,\nu_1)$-large scale Lipschitz map. Then
 \begin{align}
  \label{eq:2}
  \ds{f(\gamma(t)),f(\gamma_n(t))}\leq \lambda_1\DA + \nu_1.
 \end{align}
 Set $x_n:=\gamma_n(\alpha_n')$ and $y_n:=f(x_n)$. Let $\eta_n\in \xcalL{Y}_b$ be a good quasi-geodesic from $b$ to $y_n$.
 Since $f$ is $\xcalL{X}$-radial, $\ds{b,y_n}\to \infty$. Then there exists $\eta\in \xcalL{Y}_b^\infty$ and $(N_n)_N\subset \N$ 
 such that ${\eta_{N_n}}$ is a 
 $\xcalL{Y}$-approximate sequence of $\eta$. 
 Set $\beta_n:=(\eta_{N_n}\mid \eta)_b$. 

 Now we fix $t\in \Rp$. By Lemma~\ref{lem:prod-approx}, 
 there exists $n$ such that 
 $\rho(t)\leq t \leq \min\{\alpha_{N_n},\beta_n\}$. 
 By Lemma~\ref{lem:univ-const}~(\ref{lem:maximizer}),
 \begin{align}
  \label{eq:10}
  \ds{\eta_{N_n}(\rho(t)),\eta(\rho(t))}\leq \DA.
 \end{align}
 By (\ref{eq:2}),~(\ref{eq:10}) and the assumption that $f$ is a 
 $\rho$-$H$-$\xcalL{X}_a$-$\xcalL{Y}_b$-equivariant map,
 \begin{align*}
  \ds{f(\gamma(t)),\eta(\rho(t))}&\leq \ds{f(\gamma(t)),f(\gamma_{N_n}(t))} + \ds{f(\gamma_{N_n}(t)),\eta_{N_n}(\rho(t))}
  + \ds{\eta_{N_n}(\rho(t)),\eta(\rho(t))}\\
  &\leq \lambda_1\DA + \nu_1 + H + \DA.
 \end{align*}
 We denote by $T$, the right most side of the above inequality. 
 Then we have
 that $f$ is $\rho$-$T$-$\xcalL{X}_a^\infty$-$\xcalL{Y}_b^\infty$-equivariant.
\end{proof}

\subsection{Proof of Theorem~\ref{thm:fixed-pt}}
\label{sec:proof-theor-refthm:f}
Let $X$ be a proper coarsely convex space. 
Set $\bar{X}:=X\cup \partial X$.
By Theorem~\ref{cor:visu-bdl-cont}, $f$ extends to the continuous map
$f\cup \partial f\colon\bar{X}\to \bar{X}$.
By \cite[Proposition 6.6]{FO-CCH}, $\bar{X}$ is a coarse compactification.
So Theorem~\ref{thm:fixed-pt} follows 
from~\cite[Corollary 1.7]{MR2804542}.

\section{Coarsely surjective}
\label{sec:coarsely-surjective}


By Proposition~\ref{prop:visual-proper}, visual maps are coarse maps,
so they induce continuous maps on Higson coronae. Austin and Virk
\cite{MR3576439} showed that coarsely surjective maps induce surjections
on Higson coronae. In \cite[Section 6.2]{FO-CCH}, it is shown that for
a proper coarsely convex space $X$, $X\cup \partial X$ is a coarse
compactification. Then by the universality of the Higson
compactification~\cite[Proposition 2.39]{MR2007488}, we have the following.

\begin{proposition}
\label{prop:csurj-surj}
 Let $X$ and $Y$ be proper coarsely convex spaces.
 Let $f\colon X\to Y$ be a visual map. If $f$ is coarsely surjective, then 
 the induced map $\partial f\colon \partial X\to \partial Y$ is surjective.
\end{proposition}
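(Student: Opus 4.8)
The plan is to transfer surjectivity along the natural quotient maps from the Higson corona down to the ideal boundary. First observe that $f$ is a coarse map: it is large scale Lipschitz, hence bornologous, and it is metrically proper by Proposition~\ref{prop:visual-proper}. Therefore $f$ induces a continuous map $\nu f\colon \nu X\to \nu Y$ between the Higson coronae, and since $f$ is coarsely surjective, the theorem of Austin and Virk~\cite{MR3576439} shows that $\nu f$ is surjective. On the other hand, by \cite[Section 6.2]{FO-CCH} the space $\bar X = X\cup\partial X$ is a coarse compactification of $X$, and likewise $\bar Y = Y\cup\partial Y$ of $Y$. Since the Higson compactification is the maximal coarse compactification~\cite[Proposition 2.39]{MR2007488}, there are continuous surjections $p_X\colon hX\to\bar X$ and $p_Y\colon hY\to\bar Y$ restricting to the identity on $X$ and $Y$; as $p_X^{-1}(X)=X$, restriction to the coronae gives continuous surjections $q_X\colon\nu X\to\partial X$ and $q_Y\colon\nu Y\to\partial Y$.

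Next I would establish that the square formed by $\nu f$, $\partial f$, $q_X$ and $q_Y$ commutes, i.e.\ $q_Y\circ\nu f=\partial f\circ q_X$. Fix $\omega\in\nu X$. Since $X$ is dense in $hX$, choose a net $(x_i)$ in $X$ converging to $\omega$ in $hX$. Applying $p_X$ (continuous, and the identity on $X$) gives $x_i\to q_X(\omega)$ in $\bar X$ with $q_X(\omega)\in\partial X$; applying $p_Y\circ hf$, where $hf\colon hX\to hY$ extends $f$ and restricts to $\nu f$ on the corona, gives $f(x_i)\to q_Y(\nu f(\omega))$ in $\bar Y$ with $q_Y(\nu f(\omega))\in\partial Y$. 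Now Corollary~\ref{cor:visu-bdl-cont} says $f\cup\partial f$ is continuous at the boundary point $q_X(\omega)$, so $f(x_i)=(f\cup\partial f)(x_i)\to\partial f(q_X(\omega))$ in $\bar Y$ as well. Since $\bar Y$ is Hausdorff, $q_Y(\nu f(\omega))=\partial f(q_X(\omega))$, proving commutativity.

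With the square in hand the conclusion is a short diagram chase: $\partial f(\partial X)=\partial f(q_X(\nu X))=q_Y(\nu f(\nu X))=q_Y(\nu Y)=\partial Y$, where the first equality uses surjectivity of $q_X$, the second the commuting square, the third surjectivity of $\nu f$ (Austin and Virk), and the last surjectivity of $q_Y$. Hence $\partial f$ is surjective. The step requiring the most care is the commutativity of the square: one must not assume that $f\cup\partial f$ is continuous on all of $\bar X$ (the map $f$ itself need not even be continuous), and the net argument above is arranged so that only continuity of $f\cup\partial f$ at points of $\partial X$---which is exactly what Corollary~\ref{cor:visu-bdl-cont} provides---is invoked.
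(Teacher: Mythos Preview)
Your proof is correct and follows the Higson--corona approach that the paper itself sketches in the paragraph immediately preceding the proposition (citing Austin--Virk for surjectivity of $\nu f$ and the universality of the Higson compactification). The paper then offers, ``for reader's convenience,'' a second self-contained argument that avoids Higson coronae altogether: given $[\eta]\in\partial Y$, one chooses $x_n\in X$ with $d(f(x_n),\eta(n))\leq S$, extracts via properness a subsequence converging to some $[\gamma]\in\partial X$, and verifies $\partial f([\gamma])=[\eta]$ directly from the product estimates (Lemmas~\ref{lem:associate} and~\ref{lem:univ-const}). Your route situates the result in a general framework and makes transparent that nothing beyond the coarse-compactification property of $X\cup\partial X$ is needed; the paper's explicit proof is more elementary and stays entirely within the machinery already developed. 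Your careful handling of the commuting square---invoking only continuity of $f\cup\partial f$ at points of $\partial X$ via Corollary~\ref{cor:visu-bdl-cont}, rather than global continuity---is precisely the point the paper's sketch leaves implicit.
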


For reader's convenience, we give a proof of 
Proposition~\ref{prop:csurj-surj} without using Higson coronae.

\begin{proof}
 We use same setting for $X$ and $Y$ as those stated in the beginning of
 Section~\ref{sec:visual-map-radial}. 
 Since $f$ is coarsely surjective,
 there exists $S>0$ such that $S$-neighbourhood of $f(X)$ is equal to $Y$. Then for 
 $\eta\in \xcalL{Y}_b^\infty$ and $n\in \N$, there exists 
 $x_n\in X$ such that $\ds{\eta(n),f(x_n)}\leq S$. 
 We suppose that $f$ is a $(\lambda_1,\nu_1)$-large scale Lipschitz map.
 We have
 \begin{align*}
  \ds{x_n,a}&\geq \frac{1}{\lambda_1}(\ds{f(x_n),b}-\nu_1)\geq
  \frac{1}{\lambda_1}(\ds{\eta(n),b}-\ds{\eta(n),f(x_n)}-\nu_1)\\
  &\geq \frac{1}{\lambda_1}(\ds{\eta(n),b}-S-\nu_1)\to \infty,
 \end{align*}
 Thus there exists 
 $\gamma\in \xcalL{X}_a^\infty$ and $(N_n)_n\subset \N$ such that
 \begin{align*}
  \lim_{n\to \infty}(x_{N_n}\mid [\gamma])_a = \infty.
 \end{align*}
 Then we have
 \begin{align}
  \label{eq:1}
  (\gamma(n)\mid x_{N_n})_a\geq 
  \DA^{-1}\min\{(\gamma(n)\mid [\gamma])_a,([\gamma],x_{N_n})_a\}\to \infty.
 \end{align}
 
 We will show that $\partial f([\gamma])=[\eta]$. Set 
 $y:=\partial f([\gamma])$ and $y_n=f(\gamma(n))$. Since 
 $f\cup \partial f$ is continuous at $[\gamma]\in \partial_a X$, 
 we have $(y\mid y_n)_b\to\infty$. 
 Now we have
 \begin{align*}
  (y\mid [\eta])_b\geq 
  \DA^{-2}\min\{(y\mid y_n)_b,(y_n\mid f(x_{N_n}))_b,(f(x_{N_n})\mid [\eta])_b\}
 \end{align*}
 By Lemma~\ref{lem:associate}, we have 
 $(f(x_{N_n})\mid [\eta])_b \to \infty$.
 Since $f$ is visual, by (\ref{eq:1}), we have 
 \begin{align*}
  (y_n\mid f(x_{N_n}))_b = (f(\gamma(n))\mid f(x_{N_n}))_b\to \infty.
 \end{align*}
 Thus 
 $(y\mid [\eta])_b = \infty$, therefore 
 $\partial f([\gamma]) = y = [\eta]$.
\end{proof}

\section{Coarsely n-to-one map}
\label{sec:coarsely-n-one}


We say that a map is \textit{$n$-to-1} if its point-inverses contain at most $n$ points. 

The following proposition is a generalization 
of \cite[Theorem 7.1]{dydak-G-bdry-maps} 
in the case of Gromov hyperbolic spaces, to that of coarsely convex spaces.

\begin{proposition}
\label{prop:bdry-n-to-one}
 Let $X$ and $Y$ be proper coarsely convex spaces, with base points $a\in X$ and $b\in Y$, respectively.
 Let $f\colon X\to Y$ be a visual map, and $\partial f\colon \partial X\to \partial Y$ be the map induced by $f$.
 The following is equivalent.
 \begin{enumerate}[$($A$)$]
  \item \label{item:1}
	$\partial f$ is $n$-to-one.
  \item \label{item:2}
	For all $R>0$, there exists $S>0$ such that, for all $x_1,\dots,x_{n+1}\in X$ with
	$(x_i\mid x_j)<R\, (\forall i,j,\, i\neq j)$, there exists $1\leq k<m\leq n+1$
	satisfying $(f(x_k)\mid f(x_m))_b<S$.
 \end{enumerate}
\end{proposition}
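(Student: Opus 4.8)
The plan is to prove both implications by translating between the boundary condition and the large-scale condition using the correspondence between points of $\partial X$ and sequences tending to infinity, together with the quasi-ultrametric inequalities of Lemma~\ref{lem:univ-const} and the continuity of $f\cup\partial f$ (Corollary~\ref{cor:visu-bdl-cont}).

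For $(\ref{item:2})\Rightarrow(\ref{item:1})$, I would argue by contraposition: suppose $\partial f$ is not $n$-to-one, so there exist distinct $\xi_1,\dots,\xi_{n+1}\in\partial X$ with $\partial f(\xi_i)=\partial f(\xi_j)=\zeta$ for all $i,j$. Pick representatives $\gamma_i\in\xcalL{X}_a^\infty$ of $\xi_i$ and set $x_i^{(m)}:=\gamma_i(m)$. Since the $\xi_i$ are distinct, $R:=1+\max_{i\neq j}(\xi_i\mid\xi_j)_a<\infty$, and by the quasi-ultrametric inequality (Lemma~\ref{lem:univ-const}(\ref{lem:qrayultmvisu})) together with $(x_i^{(m)}\mid\xi_i)_a\to\infty$ we get $\limsup_m(x_i^{(m)}\mid x_j^{(m)})_a\leq\DA R$ for $i\neq j$, so these points satisfy the hypothesis of $(\ref{item:2})$ with $\DA R+1$ in place of $R$ for all large $m$. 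On the other hand $f(x_i^{(m)})\to\zeta$ for every $i$ by Corollary~\ref{cor:visu-bdl-cont}, so $(f(x_k^{(m)})\mid f(x_m^{(m)}))_b\to\infty$ for every pair, contradicting the existence of a uniform bound $S$. This gives $(\ref{item:1})\Rightarrow\neg(\ref{item:2})$ fails, i.e. $(\ref{item:2})\Rightarrow(\ref{item:1})$ once phrased correctly.

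For $(\ref{item:1})\Rightarrow(\ref{item:2})$, again by contraposition: suppose $(\ref{item:2})$ fails, so there is $R>0$ and for every $S=m$ points $x_1^{(m)},\dots,x_{n+1}^{(m)}\in X$ with $(x_i^{(m)}\mid x_j^{(m)})_a<R$ for $i\neq j$ yet $(f(x_k^{(m)})\mid f(x_m^{(m)}))_b\geq m$ for all $k<m$ (indices here should be cleaned up to avoid the clash with the running index). The latter forces $\ds{f(x_i^{(m)}),b}\to\infty$, and since $f$ is a large scale Lipschitz coarse map (Proposition~\ref{prop:visual-proper}), $\ds{x_i^{(m)},a}\to\infty$ as well. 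Using Corollary~\ref{cor:As-Arz} and a diagonal/subsequence extraction, pass to a subsequence so that each sequence $(x_i^{(m)})_m$ tends to infinity; by Lemma~\ref{lem:tendstoinfty} it converges to some $\xi_i\in\partial X$. The bound $(x_i^{(m)}\mid x_j^{(m)})_a<R$ and the quasi-ultrametric inequality give $(\xi_i\mid\xi_j)_a\leq\DA R<\infty$ for $i\neq j$, so the $\xi_i$ are pairwise distinct. Meanwhile $(f(x_i^{(m)})\mid f(x_j^{(m)}))_b\to\infty$, and since $f(x_i^{(m)})\to\partial f(\xi_i)$ by Corollary~\ref{cor:visu-bdl-cont}, the quasi-ultrametric inequality forces $\partial f(\xi_i)=\partial f(\xi_j)$ for all $i,j$. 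Thus $\partial f$ has a point-inverse with at least $n+1$ points, so it is not $n$-to-one.

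The main obstacle I expect is the bookkeeping in $(\ref{item:1})\Rightarrow(\ref{item:2})$: one needs simultaneously to extract a subsequence along which \emph{all} $n+1$ sequences converge in $\bar X$ and along which the pairwise boundary products $(x_i^{(m)}\mid x_j^{(m)})_a$ have controlled $\limsup$ while $(f(x_i^{(m)})\mid f(x_j^{(m)}))_b\to\infty$; this is where Corollary~\ref{cor:As-Arz} (applied $n+1$ times, diagonally) and the precise form of the quasi-ultrametric inequality with constant $\DA$ have to be combined carefully. The rest is a routine chain of applications of Lemma~\ref{lem:univ-const}(\ref{lem:qrayultmvisu}) and the continuity statement.
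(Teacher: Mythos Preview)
Your proposal is correct and follows essentially the same contrapositive route as the paper: in each direction one extracts sequences, passes to boundary limits via Corollary~\ref{cor:As-Arz} and Lemma~\ref{lem:tendstoinfty}, and then invokes the quasi-ultrametric inequality of Lemma~\ref{lem:univ-const}(\ref{lem:qrayultmvisu}) together with the continuity of $f\cup\partial f$. The only differences are cosmetic: the paper organizes $\neg(\ref{item:1})\Rightarrow\neg(\ref{item:2})$ via the neighbourhoods $U(a_i,t)=\{x:(x\mid a_i)_a\geq t\}$ (choosing $R$ so that the $U(a_i,\DA^{-1}R)$ are pairwise disjoint) rather than sequences along rays, and a couple of your bounds should read $\DA^2 R$ rather than $\DA R$ since two applications of the quasi-ultrametric are needed.
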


We say that the map $f$ is \textit{visually $n$}-to-one if $f$ satisfies 
the statement~(\ref{item:2}) of Proposition~\ref{prop:bdry-n-to-one}.

\begin{proof}
 First, we suppose that (\ref{item:2}) is negative, and we will shows that (\ref{item:1}) is negative.


 Since (\ref{item:2}) is negative, there exists $R>0$ such that for all 
 $l\in \N$, there exists  $x_1^l,\dots,x_{n+1}^l\in X$ with 
 $(x_i^l\mid x_j^l)_a<R$ and $(f(x_i^l)\mid f(x_j^l))_b>l$ for all 
 $i,j\in \{1,\dots,n+1\}$ 
 with $i\neq j$. Since $X$ is proper and by Proposition~\ref{cor:As-Arz}, 
 by replacing subsequence, we can assume that for each $i$, 
 the sequence $(x_i^l)_l$ converges to some $x_i\in \partial X$ when 
 $l\to \infty$. 
 Since $(x_i^l\mid x_j^l)_a<R$, we have $x_i\neq x_j$. 
 Since $(f(x_i^l)\mid f(x_j^l))_b\geq l\to \infty$, we have
 $\partial f(x_i)=\partial f(x_j)$.
 Therefore $\partial f$ is not $n$-to-one.

 Next, we suppose that (\ref{item:1}) is negative, and we will show that (\ref{item:2}) is negative.
 There exist $n+1$ points $a_1,\dots,a_{n+1}\in \partial X$, 
 which are mutually different, such that $\partial f(a_i) = \partial f(a_j)$
 for all $i,j$. Set $z:=\partial f(a_1)$. For $t> 0$, we define
 \begin{align*}
  U(a_i,t):=\{x\in X\cup \partial X: (x\mid a_i)_a\geq t\}.
 \end{align*}

 Let $\DA$ be a constant satisfying the statement of Lemma~\ref{lem:univ-const} for 
 both $X$ and $Y$.
 We take $R>0$ satisfying $U(a_i,\DA^{-1} R)\cap U(a_j,\DA^{-1} R)=\emptyset$ 
 for all $i\neq j$. We will see that for all $S>0$,
 we have $U(a_i,R)\cap f^{-1}(U(z,\DA S)) \neq \emptyset$. 

 Let $\gamma_i\in \xcalL{X}_a^\infty$ with $\gamma_i\in a_i$.
 By Corollary~\ref{cor:visu-bdl-cont}, we have 
 $z=\partial f(a_i) = \lim_{n\to \infty}f(\gamma_i(n))$.
 Since $(a_i\mid \gamma_i(n))_a\to \infty$, for sufficiently large $n$, 
 we can assume that 
 $\gamma_i(n)\in U(a_i,R)$. Similarly, by 
 $(z\mid f(\gamma_i(n)))_b\to \infty$, for sufficiently large $n$, 
 we can assume that $f(\gamma_i(n))\in U(z,\DA S)$. 
 Then $\gamma_i(n)\in U(a_i,R)\cap f^{-1}(U(z,\DA S)) \neq \emptyset$.

 For each $i$, we choose $x_i\in U(a_i,R)\cap f^{-1}(U(z,\DA S))$. 
 Then we have $(x_i\mid x_j)< R$ for all $i\neq j$.
 Indeed, if we suppose $(x_i\mid x_j)\geq R$, then
 \begin{align*}
  (a_i\mid x_j)_a\geq \DA^{-1}\min\{(a_i\mid x_i)_a, (x_i\mid x_j)_a\}
  \geq \DA^{-1}  R,
 \end{align*}
 so $x_j\in U(a_i,\DA^{-1} R)$, and this contradicts the assumption of $R$. 

 On the other hand, since $f(x_i)\in U(z,\DA S)$, we have
 \begin{align*}
  (f(x_i)\mid f(x_j))_b\geq \DA^{-1}\min\{(f(x_i)\mid z)_b, (z\mid f(x_j))_b\}\geq S.
 \end{align*}
 Therefore (\ref{item:2}) is negative.
\end{proof}
Miyata and Virk~\cite{Miyata-Virk-dim-rais-large}
introduced the following notion, which they called $(\mathrm{B})_n$. 
Later, Dydak and Virk~\cite{dydak-G-bdry-maps} reintroduced and 
they named it coarsely $n$-to-one.
\begin{definition}
 Let $X$ and $Y$ be metric spaces. We say that a coarse map 
 $f\colon X\to Y$ is \textit{coarsely $n$-to-one}  if
 for all $R>0$, there exists $S>0$ such that for all subset $A\subset Y$ with $\diam (A)<R$, there exists
 $B_1,\dots, B_n\subset X$ satisfying $\diam(B_i)<S$ for all $i$ and $f^{-1}(A)= B_1\cup \dots \cup B_n$.
\end{definition}

We use the following characterisation of coarsely $n$-to-one maps due to Dydak and Virk.
\begin{proposition}[{\cite[Proposition 7.3.]{dydak-G-bdry-maps}}]
\label{prop:cn-2-one-RS}
 Let $X$ and $Y$ be metric spaces and let $f\colon X\to Y$ be a bornologous map. Then the following is equivalent.
 \begin{enumerate}[$($A$)$]
  \item $f$ is coarsely $n$-to-one.
  \item \label{item:cn2one-RS}
	For all $R>0$, there exists $S>0$ such that, for all $n+1$ points $x_1,\dots, x_{n+1}\in X$ with
	$\ds{x_i,x_j}> S$ for all $i\neq j$,
	there exists $1\leq k<m\leq n+1$ such that $\ds{f(x_k),f(x_m)}>R$.
 \end{enumerate}
\end{proposition}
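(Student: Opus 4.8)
The plan is to prove the two implications separately; each is a short pigeonhole argument, so I do not expect a real obstacle, only some bookkeeping with strict versus non-strict inequalities.

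For the implication (A) $\Rightarrow$ (B), I would start from a given $R>0$, apply the coarse $n$-to-one hypothesis to the radius $R+1$, and take the resulting constant as the $S$ witnessing (B). Given points $x_1,\dots,x_{n+1}$ that are pairwise more than $S$ apart, I would argue by contradiction: if every pair of images satisfied $\ds{f(x_k),f(x_m)}\le R$, then the finite set $A:=\{f(x_1),\dots,f(x_{n+1})\}$ would have $\diam(A)\le R<R+1$, so $f^{-1}(A)$ would split as $B_1\cup\dots\cup B_n$ with each $\diam(B_i)<S$; since each $x_i$ lies in $f^{-1}(A)$ and there are $n+1$ of them distributed among $n$ sets, two of them would share some $B_i$ and hence be less than $S$ apart, a contradiction. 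So some pair of images must be more than $R$ apart.

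For (B) $\Rightarrow$ (A), I would fix $R>0$, take the constant $S_0$ given by (B) for that $R$, and claim that $S:=2S_0+1$ works for (A). Given $A\subseteq Y$ with $\diam(A)<R$, I would use Zorn's lemma to choose a subset $\{p_1,\dots,p_k\}$ of $f^{-1}(A)$ that is maximal among those whose points are pairwise more than $S_0$ apart (taking the empty set when $f^{-1}(A)=\emptyset$). The key step, and the one I expect to carry the real content, is the bound $k\le n$: if there were $n+1$ such points, (B) would force two of their images to be more than $R$ apart, which is impossible since they all lie in $A$. Maximality then forces every point of $f^{-1}(A)$ to lie within $S_0$ of some $p_i$, so setting $B_i:=\{x\in f^{-1}(A):\ds{x,p_i}\le S_0\}$ yields $f^{-1}(A)=B_1\cup\dots\cup B_k$ with $\diam(B_i)\le 2S_0<S$; padding with empty sets to reach $n$ pieces completes the argument.

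In summary, the only care required is (i) the passage from $R$ to $R+1$ and from $2S_0$ to $2S_0+1$, needed because (A) is phrased with strict diameter bounds while the natural sets produced have their diameters bounded non-strictly, and (ii) the cardinality estimate for the maximal $S_0$-separated subset, which is exactly where hypothesis (B) enters. The bornologous assumption on $f$ is not used in an essential way here; it is present only because ``coarsely $n$-to-one'' is a notion for coarse maps.
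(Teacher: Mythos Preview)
The paper does not supply its own proof of this proposition; it is quoted from Dydak--Virk \cite{dydak-G-bdry-maps} with only a citation. Your argument is correct and is essentially the standard pigeonhole proof one would expect.

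One small remark: invoking Zorn's lemma in the direction (B)$\Rightarrow$(A) is overkill. The very bound you derive shows that any subset of $f^{-1}(A)$ whose points are pairwise more than $S_0$ apart has at most $n$ elements, so a maximal such subset can be obtained greedily in finitely many steps (pick any point, then any point more than $S_0$ away from it, and so on; the process terminates after at most $n$ steps). This also avoids the slight awkwardness of writing the maximal set as $\{p_1,\dots,p_k\}$ before having established its finiteness. Your observation that the bornologous hypothesis plays no role in either implication is accurate.
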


\begin{theorem}
 \label{thm:coarseN-to-one}
 Let $X$ and $Y$ be proper coarsely convex spaces and let $f\colon X\to Y$ be a visual, 
 $\rho$-$T$-$\xcalL{X}_a^\infty$-$\xcalL{Y}_b^\infty$-equivariant map. If $f$ is coarsely $n$-to-one, then the induced map 
 $\partial f\colon \partial X\to \partial Y$ is $n$-to-one.
\end{theorem}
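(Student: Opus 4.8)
The plan is to verify condition (\ref{item:2}) of Proposition~\ref{prop:bdry-n-to-one}, since that proposition already establishes the equivalence between $\partial f$ being $n$-to-one and $f$ being visually $n$-to-one. So I must show: for all $R>0$ there exists $S>0$ such that whenever $x_1,\dots,x_{n+1}\in X$ satisfy $(x_i\mid x_j)_a<R$ for all $i\neq j$, there exists $k<m$ with $(f(x_k)\mid f(x_m))_b<S$. The idea is to transport the Gromov-product bound $(x_i\mid x_j)_a<R$ into an honest \emph{metric} bound on suitable points along good quasi-geodesics through the $x_i$, apply Proposition~\ref{prop:cn-2-one-RS}(\ref{item:cn2one-RS}) (coarsely $n$-to-one $\Leftrightarrow$ the metric separation condition), and then transport the resulting metric bound in $Y$ back into a Gromov-product bound $(f(x_k)\mid f(x_m))_b<S$.

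First I would fix $R>0$ and, for each $i$, choose a good quasi-geodesic $\gamma_i\in\xcalL{X}_a$ with $\gamma_i(0)=a$ and $\gamma_i(L_i)=x_i$ (if $x_i$ is close to the basepoint the estimates are trivial). The hypothesis $(x_i\mid x_j)_a<R$ together with Lemma~\ref{lem:univ-const}(\ref{lem:D3}) gives $(\gamma_i\mid\gamma_j)_a<R$ (up to the constant $\DA$). The key geometric input is that two good quasi-geodesics from $a$ whose Gromov product is small must \emph{diverge} quickly: beyond parameter $\approx R$ the points $\gamma_i(t)$ and $\gamma_j(t)$ are far apart. Concretely, if they stayed within bounded distance at some large $t$, Lemma~\ref{lem:prod-est-bigon} (with $\alpha$ bounded) would force $(\gamma_i\mid\gamma_j)_a$ to be large, a contradiction. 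So I can find a parameter $t_0=t_0(R)$ such that the points $p_i:=\gamma_i(t_0)$ (for those $i$ with $L_i\geq t_0$; the short ones are handled separately since then $x_i$ lies in a bounded set) satisfy $\ds{p_i,p_j}>S_1$ for all $i\neq j$, where $S_1$ can be taken as large as we like by increasing $t_0$ — this is exactly what we need to invoke coarse $n$-to-oneness.

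Applying Proposition~\ref{prop:cn-2-one-RS}(\ref{item:cn2one-RS}) to the $n+1$ points $p_1,\dots,p_{n+1}$ (after choosing $S_1$ to be the $S$ associated to a preassigned $R'$), we get indices $k<m$ with $\ds{f(p_k),f(p_m)}>R'$. I would then combine this with the $\rho$-$T$-$\xcalL{X}_a^\infty$-$\xcalL{Y}_b^\infty$-equivariance — more precisely with the finitary version coming from the proof of Theorem~\ref{thm:visual-Lip} (equation~(\ref{eq:3}) and the estimates around it) applied along $\gamma_k$ and $\gamma_m$ — to conclude that $f(p_k)$ and $f(p_m)$ sit close to good quasi-geodesics $\eta_k,\eta_m\in\xcalL{Y}_b$ at comparable parameters $\rho(t_0)$, where the distance $\ds{\eta_k(\rho(t_0)),\eta_m(\rho(t_0))}$ is bounded below by something growing with $R'$. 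By Lemma~\ref{lem:prod-est-bigon} applied in $Y$, a lower bound on the distance of two good quasi-geodesics from $b$ at a given parameter yields an \emph{upper} bound on their Gromov product, hence $(f(x_k)\mid f(x_m))_b=(\eta_k\mid\eta_m)_b<S$ for a suitable $S=S(R)$, using Lemma~\ref{lem:univ-const}(\ref{lem:D3}) once more to pass between $(f(x_k)\mid f(x_m))_b$ and $(\eta_k\mid\eta_m)_b$.

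The main obstacle I anticipate is the bookkeeping in the middle step: translating "small Gromov product" into "metric divergence at a controlled parameter" quantitatively, and making sure the parameter $t_0$ chosen in $X$ is both small enough that $\rho(t_0)$ stays below $\min\{L_{\eta_k},L_{\eta_m}\}$ (so the equivariance inequality applies) and large enough that the resulting separation in $Y$ exceeds the threshold required by coarse $n$-to-oneness. This is a finite tower of $\epsilon$–$\delta$ choices involving $\rho$, $\sigma$ (implicit in radiality, needed to guarantee $L_{\eta_i}\to\infty$), $\DA$, and the large-scale Lipschitz constants; none of the individual steps is deep, but threading the quantifiers in the right order is where the care is needed. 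A secondary technical point is handling the $x_i$ that lie in a bounded neighbourhood of $a$: for those, $f(x_i)$ lies in a bounded neighbourhood of $b$, so $(f(x_k)\mid f(x_m))_b$ is automatically bounded whenever one of $k,m$ is such an index, and these cases can be absorbed into the final constant $S$.
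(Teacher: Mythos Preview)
Your strategy via Proposition~\ref{prop:bdry-n-to-one} is reasonable in spirit, but there is a real gap at the equivariance step. The hypothesis is that $f$ is $\rho$-$T$-$\xcalL{X}_a^\infty$-$\xcalL{Y}_b^\infty$-equivariant in the sense of Definition~\ref{def:LLoo-radial}: for each \emph{infinite ray} $\gamma\in\xcalL{X}_a^\infty$ there is a ray $\eta\in\xcalL{Y}_b^\infty$ with $\ds{f\circ\gamma(t),\eta(\rho(t))}<T$. This says nothing about finite segments $\gamma_i\in\xcalL{X}_a$ joining $a$ to arbitrary points $x_i\in X$, which is what you need in order to place $f(p_k),f(p_m)$ near specified quasi-geodesics $\eta_k,\eta_m\in\xcalL{Y}_b$ at parameter $\rho(t_0)$. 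Your appeal to ``the finitary version coming from the proof of Theorem~\ref{thm:visual-Lip}'' does not help: that theorem \emph{assumes} the finite-segment equivariance of Definition~\ref{def:LL-equiv} as a hypothesis; it does not derive it from the infinite version, and Lemma~\ref{lem:LL-LLoo} goes only in the opposite direction. Without this, you have no relation between $f(p_k)=f(\gamma_k(t_0))$ and any good quasi-geodesic in $Y$ from $b$ to $f(x_k)$, so the large distance $\ds{f(p_k),f(p_m)}$ cannot be converted into an upper bound on $(f(x_k)\mid f(x_m))_b$. (A minor side issue: Lemma~\ref{lem:prod-est-bigon} gives a \emph{lower} bound on Gromov products, so it is the wrong tool for the final step in $Y$; you would want the contrapositive of Lemma~\ref{lem:univ-const}(\ref{lem:maximizer}) instead.)

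The paper's argument sidesteps this entirely by working directly at the boundary, where the $\infty$-equivariance applies verbatim. Assuming $\partial f$ is not $n$-to-one, one picks $n+1$ distinct points $a_1,\dots,a_{n+1}\in\partial X$ with common image, representatives $\gamma_i\in\xcalL{X}_a^\infty$, and (by the hypothesis as stated) rays $\eta_i\in\xcalL{Y}_b^\infty$ with $\ds{f\circ\gamma_i(t),\eta_i(\rho(t))}<T$. Since the $a_i$ are distinct, the points $\gamma_i(s)$ are pairwise far apart for large $s$; Proposition~\ref{prop:cn-2-one-RS} then yields $k,m$ with $\ds{f(\gamma_k(s)),f(\gamma_m(s))}>D'+2T$, hence $\ds{\eta_k(\rho(s)),\eta_m(\rho(s))}>D'$, so $[\eta_k]\neq[\eta_m]$ by Lemma~\ref{lem:HdistD}. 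But Lemma~\ref{lem:LLoo-f} gives $[\eta_i]=\partial f(a_i)$, a contradiction. Working with rays on both sides is exactly what makes the given hypothesis usable and replaces all of the quantifier bookkeeping you were anticipating.
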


\begin{proof}
 We use same setting for $X$ and $Y$ as those stated in the beginning of
 Section~\ref{sec:visual-map-radial}. 

 We suppose that $\partial f$ is not $n$-to-one. Thus there exist mutually different $n+1$ elements 
 $a_1,\dots,a_{n+1}\in \partial X$ such that $\partial f(a_1)=\partial f(a_2) = \dots =\partial f(a_{n+1})$.

 We choose $\gamma_i\in \xcalL{X}_a^\infty$ satisfying $\gamma_i\in a_i$ 
 for each $i=1,\dots,n+1$. Since $f$ is 
 $\rho$-$T$-$\xcalL{X}_a^\infty$-$\xcalL{Y}_b^\infty$-equivariant, 
 for each $\gamma_i$, there exists $\eta_i\in \xcalL{Y}_{b}^\infty$
 such that for all $t\in \Rp$, 
 \begin{align*}
  \ds{f\circ \gamma_i(t),\eta_i(\rho(t))}< T.
 \end{align*}
 Now set $R:=D'+2T$ and let $S>0$ be a constant satisfying (\ref{item:cn2one-RS}) in Proposition~\ref{prop:cn-2-one-RS}
 for the coarsely $n$-to-one map $f$. 
 Since $a_1,\dots,a_{n+1}$ are mutually different, we can take $s\in \Rp$ such that $\ds{\gamma_i(s),\gamma_j(s)}>S$ for all $i\neq j$.
 Then by Proposition~\ref{prop:cn-2-one-RS}, there exists a pair of indices $k,m$ such that 
 \begin{align*}
  \ds{f\circ \gamma_k(s),f\circ \gamma_m(s)}>R=D'+2T.
 \end{align*}
 Therefore 
 \begin{align*}
  \ds{\eta_k(\rho(s)),\eta_m(\rho(s))}&\geq \ds{f\circ \gamma_k(s),f\circ \gamma_m(s)}-\ds{f\circ \gamma_k(s),\eta_k(\rho(s))}
  -\ds{\eta_m(\rho(s)),f\circ \gamma_m(s)}\\
  &>D'+2T - 2T = D'.
 \end{align*}
 So by Lemma~\ref{lem:HdistD}, we have $[\eta_k]\neq [\eta_m]$. However, by Lemma~\ref{lem:LLoo-f}, we have 
 $[\eta_k]=\partial f(a_k) = \partial f(a_m) = [\eta_m]$. This is a contradiction.
\end{proof}

\begin{corollary}
\label{cor:dim-rise}
 Let $X$ and $Y$ be proper coarsely convex spaces. Suppose that there exists a visual 
 $\sigma$-$T$-$\xcalL{X}_a^\infty$-$\xcalL{Y}_b^\infty$-equivariant map $f\colon X\to Y$ 
 which is coarsely surjective and coarsely $(n+1)$-to-one.  Then
 \begin{align*}
  \dim \partial Y \leq \dim \partial X + n.
 \end{align*}
\end{corollary}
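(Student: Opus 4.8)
The plan is to derive Corollary~\ref{cor:dim-rise} from Theorem~\ref{thm:dim-rise} by reducing to the hypotheses of the latter. The difference between the two statements is that Theorem~\ref{thm:dim-rise} assumes $f$ is \emph{$\xcalL{X}$-radial} and \emph{$\xcalL{X}$-$\xcalL{Y}$-equivariant} (the finite-segment notions of Definition~\ref{def:LL-equiv}), whereas the corollary assumes $f$ is visual and $\rho$-$T$-$\xcalL{X}_a^\infty$-$\xcalL{Y}_b^\infty$-equivariant (the ray-level notion of Definition~\ref{def:LLoo-radial}). So the real content of the argument is just to combine the pieces already proved, namely Proposition~\ref{prop:csurj-surj} (coarse surjectivity passes to surjectivity of $\partial f$) and Theorem~\ref{thm:coarseN-to-one} (a visual, $\rho$-$T$-$\xcalL{X}_a^\infty$-$\xcalL{Y}_b^\infty$-equivariant, coarsely $(n+1)$-to-one map induces an $(n+1)$-to-one $\partial f$), together with the standard fact that a continuous surjection between compact metric spaces that is $(n+1)$-to-one raises dimension by at most $n$.

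Concretely, first I would note that $\partial X$ and $\partial Y$ are compact metrizable spaces: they are closed subsets of the compact spaces $X\cup\partial X$ and $Y\cup\partial Y$ (Proposition after the topology subsection), and they are metrizable by Proposition~\ref{prop:q-met2met}. Next, since $f$ is visual and coarsely surjective, Proposition~\ref{prop:csurj-surj} gives that $\partial f\colon \partial X\to\partial Y$ is surjective; and $\partial f$ is continuous by Lemma~\ref{lem:partialf-r-s} (together with Lemma~\ref{lem:visualmap-r-s}, as remarked after Proposition~\ref{prop:visual-extension}). Since $f$ is visual, $\rho$-$T$-$\xcalL{X}_a^\infty$-$\xcalL{Y}_b^\infty$-equivariant, and coarsely $(n+1)$-to-one, Theorem~\ref{thm:coarseN-to-one} (applied with $n$ replaced by $n+1$) shows that $\partial f$ is $(n+1)$-to-one, i.e.\ each point-inverse has at most $n+1$ points.

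Finally, I would invoke the classical Hurewicz-type dimension-raising theorem: if $g\colon A\to B$ is a continuous surjection of compact metric spaces such that $\#g^{-1}(b)\le n+1$ for every $b\in B$, then $\dim B\le \dim A + n$. Applying this to $g=\partial f$, $A=\partial X$, $B=\partial Y$ yields $\dim\partial Y\le \dim\partial X + n$, which is the claimed inequality. (The cited analogue in Dydak--Virk, \cite[Theorem 1.1]{dydak-G-bdry-maps}, uses exactly this classical fact, so it is legitimate to quote it.)

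The main obstacle, such as it is, is bookkeeping rather than mathematics: one must be careful that the index shift is applied consistently (the corollary's ``coarsely $(n+1)$-to-one'' feeds into Theorem~\ref{thm:coarseN-to-one} with its ``$n$'' set to $n+1$, producing an $(n+1)$-to-one boundary map, which then raises dimension by $n$), and one should make sure the hypotheses of Proposition~\ref{prop:csurj-surj} and Theorem~\ref{thm:coarseN-to-one} are literally those assumed here (visual plus the ray-level equivariance, which they are). There is no hard estimate to perform; everything reduces to citing results established earlier in the paper plus the standard dimension-raising theorem for $(n+1)$-to-one maps of compacta.
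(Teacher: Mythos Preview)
Your proposal is correct and follows essentially the same route as the paper: invoke Proposition~\ref{prop:csurj-surj} for surjectivity of $\partial f$, Theorem~\ref{thm:coarseN-to-one} for the $(n+1)$-to-one property, and then the classical dimension-raising theorem for closed maps between compacta. One small caveat: your opening sentence about deriving the corollary from Theorem~\ref{thm:dim-rise} is backwards---in the paper the dependency runs the other way (Theorem~\ref{thm:dim-rise} is deduced \emph{from} Corollary~\ref{cor:dim-rise} via Theorem~\ref{thm:visual-Lip} and Lemma~\ref{lem:LL-LLoo}), so that route would be circular---but you rightly abandon it and proceed directly, which matches the paper.
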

\begin{proof}
 By Proposition~\ref{prop:csurj-surj} and Theorem~\ref{thm:coarseN-to-one}, the induced map 
 $\partial f\colon \partial X\to \partial Y$ is surjective and $(n+1)$-to-one. Since both $\partial X$ and $\partial Y$
 are compact Hausdorff spaces, $\partial f$ is a closed map. Then 
 we obtain the desired inequality by dimension raising theorem~\cite[Theorem 4.3.3]{Eng-dim-theory}. 
\end{proof}

\begin{proof}[Proof of Theorem~\ref{thm:dim-rise}]
 By Theorem~\ref{thm:visual-Lip}, $f$ is visual.
 By Lemma~\ref{lem:LL-LLoo}, $f$ is 
 $\sigma$-$T$-$\xcalL{X}_a^\infty$-$\xcalL{Y}_b^\infty$-equivariant.
 Then Theorem~\ref{thm:dim-rise} follows from Corollary~\ref{cor:dim-rise}.
\end{proof}

\begin{corollary}
 Let $X$ and $Y$ be proper coarsely convex spaces, and let $f\colon X\to
 Y$ be a visual 
 $\sigma$-$T$-$\xcalL{X}_a^\infty$-$\xcalL{Y}_b^\infty$-equivariant 
 coarse embedding map.  Then $\partial f$ is a
 topological embedding. Moreover, if $f$ is coarsely surjective, then 
 $\partial f$ is a homeomorphism.
\end{corollary}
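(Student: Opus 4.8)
The plan is to reduce the statement to the case $n=1$ of the results of Section~\ref{sec:coarsely-n-one}, together with elementary point-set topology. First I would observe that a coarse embedding is automatically \emph{coarsely $1$-to-one}: by definition there is a non-decreasing $\psi'\colon \Rp\to\Rp$ with $\psi'(t)\to\infty$ and $\psi'(\ds{x,x'})\le \ds{f(x),f(x')}$ for all $x,x'\in X$, so given $R>0$ it suffices to choose $S>0$ with $\psi'(S)>R$: any two points at distance $>S$ are then sent to points at distance $>R$, which is precisely condition (\ref{item:cn2one-RS}) of Proposition~\ref{prop:cn-2-one-RS} with $n=1$ (a coarse embedding is in particular bornologous, so that proposition applies). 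Since $f$ is assumed to be visual and $\sigma$-$T$-$\xcalL{X}_a^\infty$-$\xcalL{Y}_b^\infty$-equivariant, Theorem~\ref{thm:coarseN-to-one} applied with $n=1$ then gives that $\partial f\colon \partial X\to\partial Y$ is $1$-to-one, i.e. injective.

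Next I would bring in continuity and compactness. The map $\partial f$ is continuous by Corollary~\ref{cor:visu-bdl-cont}, being the restriction to $\partial X$ of the continuous map $f\cup\partial f$. Since $X$ is proper, $X\cup\partial X$ is compact, and $\partial X$ is closed in it: for each fixed $x\in X$ the Gromov product $(x\mid y)_a$ is bounded uniformly in $y\in X\cup\partial X$ (a good quasi-geodesic segment of $\xcalL{X}_a$ ending at $x$ has length at most $\lambda(\ds{a,x}+k)$, which bounds every $(\gamma\mid\eta)_a$), so some basic neighbourhood $V_n(x)$ is contained in $X$; hence $X$ is open and $\partial X$ is closed, so compact. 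Moreover $\partial X$ is metrizable by Proposition~\ref{prop:q-met2met}, and the same holds for $\partial Y$, which is in particular Hausdorff. A continuous injection from a compact space into a Hausdorff space is a homeomorphism onto its image, so $\partial f$ is a topological embedding.

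For the final assertion, if $f$ is in addition coarsely surjective, then $\partial f$ is surjective by Proposition~\ref{prop:csurj-surj}; combined with the previous paragraph it is a continuous bijection from a compact space onto a Hausdorff space, hence a homeomorphism.

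I do not anticipate a genuine obstacle here: the proof is essentially an assembly of Theorem~\ref{thm:coarseN-to-one}, Proposition~\ref{prop:csurj-surj}, Corollary~\ref{cor:visu-bdl-cont}, Proposition~\ref{prop:q-met2met}, and the compactness of $X\cup\partial X$. The only points deserving a little care are checking that the coarse-embedding inequality really produces the $n=1$ instance of the coarsely $n$-to-one criterion, and verifying that $\partial X$ sits as a closed (hence compact) subspace of $X\cup\partial X$, which is what makes the compact-to-Hausdorff argument legitimate.
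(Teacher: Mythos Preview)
Your proposal is correct and follows essentially the same route as the paper: showing a coarse embedding is coarsely $1$-to-one, invoking Theorem~\ref{thm:coarseN-to-one} to get injectivity of $\partial f$, then using the compact-to-Hausdorff argument for the embedding and Proposition~\ref{prop:csurj-surj} for surjectivity. The paper's version is terser (it simply asserts that $\partial X$ and $\partial Y$ are compact Hausdorff rather than arguing that $\partial X$ is closed in $X\cup\partial X$), but the logical skeleton is identical.
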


\begin{proof}
 Since $f$ is coarsely one-to-one, by Theorem~\ref{thm:coarseN-to-one},
 $\partial f$ is one-to-one. Since $\partial X$ and $\partial Y$ are
 compact Hausdorff spaces, $\partial f$ is a topological embedding. If $f$ is 
 coarsely surjective, by Proposition~\ref{prop:csurj-surj}, $\partial f$ 
 is surjective, thus $\partial f$ is a homeomorphism.
\end{proof}

\section{Example}
\label{sec:example}
Dydak and Virk constructed a example of coarsely 2-to-1 map from the Cayley
graph of the free group of rank 2 with respect to standard generating
set to the hyperbolic plane. In this section, by taking direct products
of their examples and some coarsely convex spaces, we construct examples
of coarsely 2-to-1 map between coarsely convex spaces satisfying the
assumptions in Theorem~\ref{thm:coarseN-to-one}. First we prepare some
lemmas.

\subsection{Lemmas for direct products of coarsely convex spaces}
\begin{lemma}
\label{lem:prod-metricsp}
 For $i=1,2$, let $X_i$ and $Y_i$ be metric spaces and $f_i\colon X_i\to Y_i$ be maps. The following holds.
 \begin{enumerate}[$($i$)$]
  \item \label{item:3}
	If $f_1$ and $f_2$ are coarsely surjective, so is $f_1\times f_2$.
  \item \label{item:4}
	If $f_1$ and $f_2$ are large scale Lipschitz maps, so is $f_1\times f_2$.
  \item \label{item:5}
	If $f_1$ is coarsely $n$-to-one and $f_2$ is coarsely $m$-to-one, then
	$f_1\times f_2$ is coarsely $nm$-to-one.
 \end{enumerate}
 Moreover, if $X_i$ and $Y_i$ are coarsely convex and $\xcalL{X_i}$ are systems of good quasi-geodesics of $X_i$,
 the following holds.
 \begin{enumerate}[$($i$)$]
  \setcounter{enumi}{3} 
   \item \label{item:6}
	If $f_1$ is $\sigma_1$-$\xcalL{X_i}_{a_1}$-radial 
	 and $f_2$ is $\sigma_2$-$\xcalL{X_2}_{a_2}$-radial, 
	 where $\sigma_1,\sigma_2$ are affine maps,
	 then
	$f_1\times f_2$ is 
	 $\min\{\sigma_1,\sigma_2\}$-$\xcalL{X_1\times X_2}_{(a_1,a_2)}$-radial.
 \end{enumerate}

\end{lemma}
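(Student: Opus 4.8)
The plan is to handle the three metric-space statements~(i)--(iii) and the coarsely convex statement~(iv) one at a time, in each case unwinding the relevant definition and using that $X_1\times X_2$ and $Y_1\times Y_2$ carry the $\ell_1$-metric, so that distances split additively over the two coordinates.

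Statements~(i) and~(ii) are immediate. If the $C_i$-neighbourhood of $f_i(X_i)$ is $Y_i$, then for $(y_1,y_2)\in Y_1\times Y_2$ I would pick $x_i$ with $\ds{f_i(x_i),y_i}\le C_i$ and note $\ds{(f_1\times f_2)(x_1,x_2),(y_1,y_2)}\le C_1+C_2$, so the $(C_1+C_2)$-neighbourhood of the image is everything. If $f_i$ is $(A_i,B_i)$-large scale Lipschitz, then $\ds{(f_1\times f_2)(x_1,x_2),(f_1\times f_2)(x_1',x_2')}=\ds{f_1(x_1),f_1(x_1')}+\ds{f_2(x_2),f_2(x_2')}\le\max\{A_1,A_2\}(\ds{x_1,x_1'}+\ds{x_2,x_2'})+(B_1+B_2)$, exhibiting $f_1\times f_2$ as $(\max\{A_1,A_2\},B_1+B_2)$-large scale Lipschitz.

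For~(iii) the idea is to reduce to products of subsets, which is where one has to be slightly clever. Fix $R>0$ and $A\subset Y_1\times Y_2$ with $\diam A<R$, and let $A_i$ be the $i$-th coordinate projection of $A$. Since coordinate projections are $1$-Lipschitz for the $\ell_1$-metric, $\diam A_i<R$ and $A\subset A_1\times A_2$, hence $(f_1\times f_2)^{-1}(A)\subset f_1^{-1}(A_1)\times f_2^{-1}(A_2)$. Applying the coarsely $n$-to-one property of $f_1$ and the coarsely $m$-to-one property of $f_2$ I get $S_1,S_2>0$ depending only on $R$ and decompositions $f_1^{-1}(A_1)=B^1_1\cup\cdots\cup B^1_n$, $f_2^{-1}(A_2)=B^2_1\cup\cdots\cup B^2_m$ with $\diam B^1_i<S_1$, $\diam B^2_j<S_2$. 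Then the $nm$ sets $B_{ij}:=(f_1\times f_2)^{-1}(A)\cap(B^1_i\times B^2_j)$ cover $(f_1\times f_2)^{-1}(A)$ and, by additivity of the $\ell_1$-metric, $\diam B_{ij}\le\diam B^1_i+\diam B^2_j<S_1+S_2$; so $S:=S_1+S_2$ shows $f_1\times f_2$ is coarsely $nm$-to-one.

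For~(iv), recall from Proposition~\ref{prop:productspace} that an element of $\xcalL{X_1\times X_2}_{(a_1,a_2)}$ is $\xi=\gamma_1\oplus\gamma_2$ with $\gamma_i\in\xcalL{X_i}_{a_i}$, $\Dom\gamma_i=[0,L_i]$, and $\xi(t)=\bigl(\gamma_1(\tfrac{L_1}{L_1+L_2}t),\gamma_2(\tfrac{L_2}{L_1+L_2}t)\bigr)$; moreover an affine rough contraction has $\sigma(0)=0$, hence is linear, say $\sigma_i(t)=p_it$ with $0<p_i\le1$. Using the radial property of each $f_i$ and additivity of the $\ell_1$-metric,
\[
\ds{(f_1\times f_2)(\xi(t)),(f_1\times f_2)(\xi(0))}\ \ge\ \sigma_1\!\Bigl(\tfrac{L_1t}{L_1+L_2}\Bigr)+\sigma_2\!\Bigl(\tfrac{L_2t}{L_1+L_2}\Bigr)\ =\ \frac{p_1L_1+p_2L_2}{L_1+L_2}\,t\ \ge\ \min\{p_1,p_2\}\,t=\min\{\sigma_1,\sigma_2\}(t),
\]
the last step being $p_1L_1+p_2L_2\ge\min\{p_1,p_2\}(L_1+L_2)$. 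The main thing to be careful about here is the bookkeeping at the endpoints of $\Dom\gamma_i$ together with the domain-length conventions built into the notation $\xcalL{X_i}_{a_i}$; since the radial inequality for $f_i$ is assumed along all of $\Dom\gamma_i$, this is harmless. Altogether the lemma is a routine transfer of definitions through the $\ell_1$-product, with~(iii) the only part needing a small trick, namely passing through $A_1\times A_2$ rather than $A$ itself.
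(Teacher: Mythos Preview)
Your proof is correct and follows the same approach as the paper's, but with considerably more detail: the paper leaves (i) and (ii) to the reader, cites Miyata--Virk \cite{Miyata-Virk-dim-rais-large} for (iii), and for (iv) merely recalls the form $\gamma_1\oplus\gamma_2$ of product quasi-geodesics and says ``it is easy to see.'' Your argument for (iii) is exactly the standard one behind that citation, and your computation for (iv) spells out precisely the linearity trick (using $\sigma_i(0)=0$) that makes the ``easy to see'' claim work.
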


\begin{proof}
 (\ref{item:3}) and ~(\ref{item:4}) are clear and we leave the proof to the reader. 
 (\ref{item:5}) is due to Miyata and Virk~\cite[Proposition 3.4]{Miyata-Virk-dim-rais-large}.
 We suppose that $X_i$ and $Y_i$ are coarsely convex. Let $\xcalL{X_i}$ be 
 the systems of good quasi-geodesics of $X_i$. Then by \cite[Proposition 3.3]{FO-CCH}, all element in
 the system of good quasi-geodesics $\xcalL{X_1\times X_2}$ of $X_1\times X_2$ is of the following form:
 \begin{align*}
  \alpha_1\gamma_1\oplus\alpha_2\gamma_2:[0,a_1+a_2]\ni t\mapsto
  \left(\gamma_1\left(\alpha_1t\right),
  \gamma_2\left(\alpha_2t\right)\right) \in X_1\times X_2
 \end{align*}
 where $\gamma_i\in \xcalL{X_i}$, $\Dom \gamma_i=[0,a_i]$ and $\alpha_i=a_i/(a_1+a_2)$. Now it is easy to see that
 (\ref{item:6}) holds.
\end{proof}

For the topological structure of the ideal boundaries of products of
coarsely convex spaces, the following is known.

\begin{proposition}[{\cite[Proposition 4.26]{FO-CCH}}]
 Let $X$ and $Y$ be proper coarsely convex spaces. Then the ideal boundary $\partial (X\times Y)$
 of the product $X\times Y$ is homeomorphic to the join $\partial X \star \partial Y$.
\end{proposition}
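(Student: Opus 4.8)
\emph{Strategy.} Both $\partial(X\times Y)$ and $\partial X\star\partial Y$ are compact metrizable: the former because $X\times Y$ is proper, so $X\times Y\cup\partial(X\times Y)$ is compact and $\partial(X\times Y)$ is a closed subspace; the latter because it is the join of two compact metrizable spaces. Hence it suffices to produce a continuous bijection between them, since a continuous bijection of compact Hausdorff spaces is a homeomorphism. Fix base points $O_X\in X$, $O_Y\in Y$ and put $O:=(O_X,O_Y)$ for the $\ell_1$-product, and recall from Proposition~\ref{prop:productspace} that $\xcalL{X\times Y}$ consists exactly of the segments $\gamma\oplus\eta$, which reparametrize $\gamma$ (of domain length $p$) and $\eta$ (of domain length $q$) by the factors $\tfrac{p}{p+q}$ and $\tfrac{q}{p+q}$. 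I write a point of the join as a triple $(\xi,t,\zeta)$ with $\xi\in\partial X$, $\zeta\in\partial Y$, $t\in[0,1]$, modulo $(\xi,1,\zeta)\sim(\xi,1,\zeta')$ and $(\xi,0,\zeta)\sim(\xi',0,\zeta)$. The guiding picture is that a good quasi-geodesic ray in $X\times Y$ splits into a good ray in $X$ and a good ray in $Y$ whose ``speeds'' are locked in the ratio $t:(1-t)$, and conversely.

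\emph{The map from the join.} Given $(\xi,t,\zeta)$, choose representatives $\xi=[\gamma]$, $\zeta=[\delta]$ with $\gamma\in\xcalL{X}_{O_X}^\infty$, $\delta\in\xcalL{Y}_{O_Y}^\infty$. For each $n$ pick good segments $\gamma_n\in\xcalL{X}$ from $O_X$ to $\gamma(n)$ and $\delta_n\in\xcalL{Y}$ from $O_Y$ to $\delta(n)$; by Lemma~\ref{lem:univ-const}\,(\ref{lem:ray-same-param}) these shadow $\gamma$, $\delta$ up to the universal constant $\DA$, and their domain lengths $p_n,q_n$ tend to infinity. For each $m$ choose indices $n,n'$ so that $p_n+q_{n'}\to\infty$ and $p_n/(p_n+q_{n'})\to t$, and set $\omega_m:=\gamma_n\oplus\delta_{n'}\in\xcalL{X\times Y}$. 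Using the shadowing, the parameter-regularity function $\tilde\theta$ of Proposition~\ref{prop:qgeod-ray-convex}, and the convergence of the ratio, one checks that $\{\omega_m\}$ converges pointwise on $\N$ (up to bounded error) to the diagonal map $s\mapsto(\gamma(\lfloor ts\rfloor),\delta(\lfloor(1-t)s\rfloor))$; after passing to a subsequence it is thus an $\xcalL{X\times Y}$-approximate sequence for a ray $\omega\in\xcalL{X\times Y}^\infty$, and we set $\Phi(\xi,t,\zeta):=[\omega]$. When $t=1$ the $Y$-coordinate of $\omega$ stays within $\DA$ of $O_Y$ relative to the parameter, so $[\omega]$ depends only on $\xi$ — exactly the join identification; $t=0$ is symmetric. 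Independence of $\Phi$ from the choices of $\gamma,\delta$ in their classes, of the segments, and of the auxiliary indices follows from Lemma~\ref{lem:HdistD} together with the Gromov-product estimates of Lemma~\ref{lem:univ-const}.

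\emph{The inverse and bijectivity.} For $\omega\in\partial(X\times Y)$ with an $\xcalL{X\times Y}$-approximate sequence $\{(\gamma_m\oplus\delta_m,\,p_m+q_m)\}$, pass to a subsequence along which $p_m/(p_m+q_m)\to t\in[0,1]$. If $t>0$ then $p_m\to\infty$, so $\ds{O_X,\gamma_m(p_m)}\to\infty$, and Lemma~\ref{lem:Ascoli-Arzela} yields (along a further subsequence) a ray $\gamma\in\xcalL{X}_{O_X}^\infty$ with $\liminf_m(\gamma_m(p_m)\mid[\gamma])=\infty$; put $\xi:=[\gamma]$, while if $t=0$ the choice of $\xi$ is immaterial. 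Symmetrically produce $\zeta\in\partial Y$ when $t<1$, and set $\Psi(\omega):=(\xi,t,\zeta)$. Again by Lemmas~\ref{lem:HdistD} and~\ref{lem:univ-const} this is independent of all choices, and one verifies $\Phi\circ\Psi=\id$ and $\Psi\circ\Phi=\id$, so $\Phi$ is a bijection.

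\emph{Continuity, and the main obstacle.} Continuity of $\Phi$ — which then makes it a homeomorphism — can be checked against the neighbourhood basis $\{V_n(\cdot)\}$ defining the two topologies, or, more cleanly, by comparing the visual metric $\dbdl{\epsilon}{\partial(X\times Y)}$ of Proposition~\ref{prop:q-met2met} with those on $\partial X$, $\partial Y$ and the natural metric on the join, showing $\Phi$ to be bi-H\"older. The technical heart, and the step I expect to be the main obstacle, is the control of reparametrization in the product: along an approximate sequence the ratios $p_m/(p_m+q_m)$ only converge to $t$ rather than equalling it, so one must (i) show the explicit diagonal rays are genuinely $\xcalL{X\times Y}$-approximatable, and (ii) show that the $\ell_1$-Gromov product $(\gamma\oplus\delta\mid\gamma'\oplus\delta')_O$ is comparable, up to $\DA$ and $\tilde\theta$, to the quantity governing closeness in the join — roughly, to a scaled minimum built from $(\gamma\mid\gamma')_{O_X}$, $(\delta\mid\delta')_{O_Y}$ and $\abs{t-t'}$. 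Establishing these comparisons is what simultaneously yields well-definedness of $\Phi$ and $\Psi$ and their bicontinuity.
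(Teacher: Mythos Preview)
The paper does not prove this proposition: it is quoted verbatim from \cite[Proposition~4.26]{FO-CCH} and no argument is supplied here. So there is no ``paper's own proof'' to compare your attempt against.

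That said, your outline is the natural one and is broadly consistent with the framework the paper sets up. A few remarks on where it stands as a proof. First, your construction of $\Phi$ is really only a sketch: the claim that the sequence $\{\omega_m\}$ can be arranged to converge pointwise on $\N$ (and hence form a genuine $\xcalL{X\times Y}$-approximate sequence in the sense of the paper's definition, which requires $\omega(t)=\omega(\lfloor t\rfloor)$) needs an actual diagonal argument rather than an appeal to ``shadowing''. Second, well-definedness of $\Psi$ is more delicate than you indicate: you must show that \emph{any} two $\xcalL{X\times Y}$-approximate sequences for the same $\omega$ yield the same limiting ratio $t$ and the same factor classes $\xi,\zeta$; Lemma~\ref{lem:HdistD} and Lemma~\ref{lem:univ-const} give the right tools, but the argument is not written. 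Third, you correctly identify the heart of the matter --- comparing $(\gamma\oplus\delta\mid\gamma'\oplus\delta')_O$ with the factor products and $\abs{t-t'}$ --- but you stop short of carrying it out; this is exactly the content one would expect \cite{FO-CCH} to supply. As written, your proposal is a credible plan rather than a proof.
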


Since the ideal boundary of the hyperbolic plane $\hyp$ is the circle $S^1$ and that of the real line $\R$ 
is the two point space $S^0$, the ideal boundary of the product $\hyp \times \R$ is the suspension 
$\Sigma S^1$ which is homeomorphic to the 2-dimensional sphere $S^2$. Similarly, let $\cay(F_2)$ be 
the Cayley graph of the free group of rank 2 with the standard generating set. Then the ideal 
boundary of $\cay(F_2)\times \R$ is the suspension of the Cantor set.

\subsection{Lemma for product of geodesic coarsely convex spaces}
For technical reason, in the rest of this section we only consider geodesic coarsely convex spaces.
For $i=1,2$, Let $X_i$ be geodesic $(C_i,\xcalL{X_i})$-coarsely convex spaces and 
$Y_i$ be geodesic $(C'_i,\xcalL{Y_i})$-coarsely convex spaces. Let $a_i\in X_i$ and $b_i\in Y_i$ be base points.

\begin{lemma}
\label{lem:prod-equiv}
 Let $f_i\colon X_i\to Y_i$ be maps preserving base points and distances from base points, that is, $f(a_i) = b_i$ and
 $\ds{b_i,f(x)} = \ds{a_i,x}$ for all $x\in X_i$. If $f_i$ is 
 $\id_{\Rp}$-$H$-$\xcalL{X_i}_{a_i}$-$\xcalL{Y_i}_{b_i}$-equivariant, then
 $f_1\times f_2$ is 
 $\id_{\Rp}$-$2H$-$\xcalL{X_1\times X_2}_{(a_1,a_2)}$-$\xcalL{Y_1\times Y_2}_{(b_1,b_2)}$-equivariant.
\end{lemma}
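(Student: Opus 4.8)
The plan is to decompose any product geodesic in $X_1\times X_2$ and in $Y_1\times Y_2$ into its two coordinate geodesics, to observe that the hypothesis that each $f_i$ preserves distances from the base points forces the two coordinate decompositions to be run at matching speeds, and then to apply the $\id_{\Rp}$-$H$-equivariance of $f_1$ and of $f_2$ separately in each coordinate.

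First I would fix $\Gamma\in\xcalL{X_1\times X_2}_{(a_1,a_2)}$ with $\Dom\Gamma=[0,L_\Gamma]$ and $\Delta\in\xcalL{Y_1\times Y_2}_{(b_1,b_2)}$ with $\Dom\Delta=[0,L_\Delta]$ and $\Delta(L_\Delta)=(f_1\times f_2)(\Gamma(L_\Gamma))$. Using the description of $\xcalL{X_1\times X_2}$ recalled in the proof of Lemma~\ref{lem:prod-metricsp}, I would write $\Gamma=\gamma_1\oplus\gamma_2$ and $\Delta=\eta_1\oplus\eta_2$ with $\gamma_i\in\xcalL{X_i}_{a_i}$, $\eta_i\in\xcalL{Y_i}_{b_i}$, $\Dom\gamma_1=[0,p]$, $\Dom\gamma_2=[0,q]$, $\Dom\eta_1=[0,r]$, $\Dom\eta_2=[0,s]$, so that $L_\Gamma=p+q$ and $L_\Delta=r+s$. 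Setting $x_1:=\gamma_1(p)$ and $x_2:=\gamma_2(q)$, the requirement $\Delta(L_\Delta)=(f_1\times f_2)(\Gamma(L_\Gamma))$ unwinds to $\eta_1(r)=f_1(x_1)$ and $\eta_2(s)=f_2(x_2)$.

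Next I would use that $X_i$ and $Y_i$ are geodesic, so that the members of $\xcalL{X_i}$ and $\xcalL{Y_i}$ are isometric embeddings; hence $p=\ds{a_1,x_1}$, $q=\ds{a_2,x_2}$, $r=\ds{b_1,f_1(x_1)}$, $s=\ds{b_2,f_2(x_2)}$. Since $f_i$ preserves distances from the base point, this gives $r=p$, $s=q$, and in particular $L_\Gamma=L_\Delta=p+q$. One may assume $p+q>0$, since otherwise $\Gamma$ and $\Delta$ are constant and the claim is vacuous. The reparametrizing ratios then coincide, $r/(r+s)=p/(p+q)$, so that for every $t\in[0,p+q]$,
\[
(f_1\times f_2)(\Gamma(t))=\Bigl(f_1\bigl(\gamma_1(\tfrac{p}{p+q}t)\bigr),\ f_2\bigl(\gamma_2(\tfrac{q}{p+q}t)\bigr)\Bigr),\qquad
\Delta(t)=\Bigl(\eta_1\bigl(\tfrac{p}{p+q}t\bigr),\ \eta_2\bigl(\tfrac{q}{p+q}t\bigr)\Bigr).
\]

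Finally I would apply equivariance coordinatewise: $\eta_1\in\xcalL{Y_1}_{b_1}$ has $\Dom\eta_1=[0,p]$ and $\eta_1(p)=f_1(\gamma_1(p))$, so $(\gamma_1,\eta_1)$ is an admissible pair in Definition~\ref{def:LL-equiv} for the $\id_{\Rp}$-$H$-$\xcalL{X_1}_{a_1}$-$\xcalL{Y_1}_{b_1}$-equivariant map $f_1$, whence $\ds{f_1(\gamma_1(u)),\eta_1(u)}<H$ for all $u\in[0,p]$; likewise $\ds{f_2(\gamma_2(v)),\eta_2(v)}<H$ for all $v\in[0,q]$. Since the product carries the $\ell_1$-metric and, for $t\in[0,p+q]$, we have $\tfrac{p}{p+q}t\in[0,p]$ and $\tfrac{q}{p+q}t\in[0,q]$, adding the two coordinate estimates gives $\ds{(f_1\times f_2)(\Gamma(t)),\Delta(t)}<2H$ for all $t\in[0,L_\Gamma]=[0,\min\{L_\Gamma,L_\Delta\}]$; together with $(f_1\times f_2)((a_1,a_2))=(b_1,b_2)$ this is exactly the asserted equivariance. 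I expect the speed-matching step to be the only real content: without the hypothesis that each $f_i$ preserves distances from base points, the coordinate geodesics of $\Gamma$ and of $\Delta$ would be traversed at different rates in $t$, the coordinates would desynchronize, and the two coordinatewise bounds could not simply be summed.
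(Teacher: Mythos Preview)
Your proof is correct and follows essentially the same approach as the paper: decompose the product quasi-geodesics into their coordinate pieces, use that $f_i$ preserves distances from base points together with geodesicity to force the coordinate domains (and hence the reparametrizing ratios) to match, and then apply the coordinatewise $\id_{\Rp}$-$H$-equivariance and sum in the $\ell_1$-metric. If anything, your version is slightly more careful: you begin with an \emph{arbitrary} $\Delta\in\xcalL{Y_1\times Y_2}_{(b_1,b_2)}$ satisfying the endpoint condition, as the definition of equivariance requires, whereas the paper's wording ``we choose $\eta_i$'' glosses over this universal quantifier (though the argument still goes through because every such $\Delta$ necessarily factors as $\eta_1\oplus\eta_2$ with $\eta_i(d_i)=f_i(\gamma_i(c_i))$).
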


\begin{proof}
 Let $\gamma_1\oplus \gamma_2 \in \xcalL{X_1\times X_2}_{(a_1,a_2)}$. 
 Set $[0,c_i]=\Dom \gamma_i$. 
 Then $\Dom \gamma_1\oplus \gamma_2 = [0,c_1+c_2]$.
 We choose $\eta_i\in \xcalL{Y_i}_{b_i}$ such that $\Dom \eta_i = [0,d_i]$
 and $f_i(\gamma_i(c_i))= \eta_i(d_i)$. Then we have
 $\eta_1\oplus\eta_2\in \xcalL{Y_1\times Y_2}_{(b_1,b_2)}$ and 
 $f_1\times f_2(\gamma_1\oplus\gamma_2(c_1+c_2))=\eta_1\oplus \eta_2(d_1+d_2)$.
 Since $\gamma_i$ and $\eta_i$ are geodesics, and $f_i$ preserve 
 the distance from base points,
 \begin{align*}
  c_i = \ds{\gamma_i(c_i),a_i} = \ds{f_i(\gamma_i(c_i)),b_i}
  = \ds{\eta_i(d_i),b_i} = d_i.
 \end{align*}
 Moreover, since $f_i$ is $\id_{\Rp}$-$H$-$\xcalL{X_i}$-$\xcalL{Y_i}$-equivariant,
 \begin{align*}
  \ds{f_i(\gamma_i(s)),\eta_i(s)}< H \quad (\forall s\in [0,c_i]).
 \end{align*}
 Now set $e_i:=c_i/(c_1+c_2)$. For all $t\in [0,c_1+c_1]$, we have
 \begin{align*}
  \ds{f_1\times f_2(\gamma_1\oplus \gamma_2(t)),\eta_1\oplus\eta_2(t)}
  &=\ds{(f_1\circ \gamma_1(e_1t),f_2\circ \gamma_2(e_2t)),
  (\eta_1(e_1t),\eta_2(e_2t))}\\
  &=\ds{(f_1\circ \gamma_1(e_1t),\eta_1(e_1t))} +
  \ds{(f_2\circ \gamma_2(e_2t),\eta_2(e_2t))} \\
  &< 2H.
 \end{align*}
\end{proof}

\subsection{Example}
Dydak and Virk constructed a map $f\colon \cay(F_2)\to \hyp^2$ from the 
Cayley graph of the free group of rank 2 with respect to the 
standers generating set to the hyperbolic plane \cite{dydak-G-bdry-maps}. 
The map $f$ satisfies the following:
\begin{enumerate}[(i)]
 \item \label{item:7}
       $f$ is a large scale Lipschitz map.
 \item $f(e)=O$, here $e\in F_2$ is the unit and $O\in \hyp^2$ is the
       origin.
 \item \label{item:8}
       For all $x\in \cay(F_2)$, we have $\ds{x,e} = \ds{f(x),O}$.        
 \item \label{item:9}
       $f$ is coarsely two-to-one.
\end{enumerate}
By~(\ref{item:8}), $f$ is $\id_{\Rp}$-radial. 
%
Since both $\hyp^2$ and $\R$ is geodesic coarsely convex space,
by Proposition~\ref{prop:productspace} so is the product $\hyp^2\times \R$.
Then by Lemma~\ref{lem:prod-metricsp}, 
$f\times \id\colon \cay(F_2)\times \R\to \hyp^2\times \R$ is a 
large scale Lipschitz, coarsely two-to-one map. 
By Lemma~\ref{lem:prod-equiv} and Theorem~\ref{thm:visual-Lip}, 
$f\times \id$ is visual, and by Theorem~\ref{thm:coarseN-to-one}, 
the induced map 
$\partial(f\times \id)\colon \partial(\cay(F_2)\times \R)\to S^2$
is two-to-one, where $\partial(\cay(F_2)\times \R)$ is homeomorphic to
the suspension of the Cantor set. Similarly, 
\begin{align*}
 f\times f\colon  \cay(F_2)\times \cay(F_2)\to \hyp^2\times \hyp^2
\end{align*}
is a large scale Lipschitz, coarsely four-to-one map, and induced map
\begin{align*}
 \partial(f\times f)\colon \partial(\cay(F_2)\times \cay(F_2))
 \to S^3
\end{align*}
is four-to-one map, where $\partial(\cay(F_2)\times \cay(F_2))$
is homeomorphic to the topological join of two Cantor sets.

\section{Radial extension}
\label{sec:radial-extension}
In this section, we will show that continuous maps between
the boundaries of coarsely convex spaces extend to visual maps
between those coarsely convex spaces. The idea is the following. Let
$X$ and $Y$ be coarsely convex spaces and let 
$f\colon \partial X\to \partial Y$ be a continuous map.
We first construct a map 
$\calO f\colon \calO \partial X\to \calO \partial Y$, where 
$\calO \partial X$ and $\calO \partial Y$ are open cones over $\partial X$,
and $\partial Y$, respectively. 
Then the extension might be obtained by a composition of
the ``logarithmic map'' 
$\log^\epsilon\colon X\to \calO \partial X$, $\calO f$, 
and the ``exponential map'' $\exp_\epsilon \colon \calO \partial Y\to Y$.
However, in general, the map $\log^\epsilon$ is defined only on the subset of
$X$, and the composite $\exp_\epsilon \circ \calO f$ is not necessarily 
bornologous. So we need some modifications.


\subsection{Open cone}
\label{sec:open-cone}
Let $Z$ be a compact metrizable space. 
The {\itshape open cone} over $Z$, denoted by $\calO Z$, is
the quotient $\Rp\times Z/(\{0\}\times Z)$. 
For $(t,z)\in \Rp\times Z$,
we denote by $tz$ the point in $\calO Z$ represented by $(t,z)$.

Let $d_Z$ be a metric on $Z$. 
We assume that the diameter of $Z$ is at most 2.
We define a metric $d_{\calO Z}$ on $\calO Z$ by 
\begin{align*}
 d_{\calO Z}(tx,sy):=\abs{t-s} + \min\{t,s\}d_Z(x,y).
\end{align*}
The function $d_{\calO Z}$ does satisfy the triangle inequality. 
See~\cite{WillettThesis} for details.
We call $d_{\calO Z}$ the induced metric by $d_Z$.

\begin{definition}
\label{def:calOf}
 Let $Z$ and $W$ be compact metric spaces and let $f\colon Z\to W$ 
 be a continuous map. We define the map 
 $\calO f\colon \calO Z \to \calO W$ by 
 $\calO f(tz):= t f(z)$.
\end{definition}

We remark that $\calO f$ is proper, but not necessarily bornologous.
However, by composing with the following radial contraction,
we will obtain a coarse map. 

\begin{definition}
 Let $r\colon \Rp \to \Rp$ be a rough contraction.
 The radial contraction associated
 to $r$ is the map $\radcon\colon \calO Z\to \calO Z$ defined by
 $\radcon(tz):= r(t)z$ for $tz\in \calO Z$.
\end{definition}

The following proposition says that for 
any proper continuous map from the open cone, 
by composing an appropriate 
radial contraction, we obtain a coarse map. 
In fact, being metrically proper and pseudocontinuous is enough 
to this result. We refer~\cite[Section 4]{MR1388312} and 
~\cite[Definition 5.2]{FO-CCH} for the definition of 
pseudocontinuous maps. We remark that continuous maps are pseudocontinuous.

\begin{proposition}[{\cite[Lemma 4.12]{MR1388312}}]
\label{prop:psedoconti}
 Let $V$ be a metric space, and let $F\colon \calO Z\to V$ 
 be a metrically proper pseudocontinuous map.
 Then there exists a 0-rough contraction $r\colon \Rp\to\Rp$ such that,
 for the radial contraction $\radcon \colon \calO Z \to \calO Z$
 associated to $r$, 
 the composite $F \circ \radcon\colon \calO Z \to V$ is a coarse map.
\end{proposition}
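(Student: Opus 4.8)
The plan is to verify directly that $F\circ\radcon$ is a coarse map, that is, that it is metrically proper and bornologous. The first property costs nothing: every rough contraction satisfies $r(t)\to\infty$, so $\radcon^{-1}$ of the ball of radius $\rho$ about the cone point is $\{tz:r(t)\le\rho\}$, which is bounded; hence $\radcon$ is metrically proper, and composed with the metrically proper map $F$ it stays metrically proper for every choice of $r$. Thus the whole content is to choose $r$ so that $F\circ\radcon$ becomes bornologous.

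Here I would exploit that $Z$ is compact, so each closed ball $B_n:=\{tz\in\calO Z:t\le n\}$ about the cone point is compact and connected. On $B_n$ a pseudocontinuous (in particular, continuous) map is uniformly continuous with bounded image: set $M_n:=\diam F(B_n)<\infty$, and pick for each $n$ a radius $\epsilon_n\in(0,1]$, decreasing in $n$, such that $F$ varies by at most $1$ over any ball of radius $\epsilon_n$ contained in $B_n$. (This is the only point where pseudocontinuity, i.e.\ uniform continuity on compacta together with finiteness of $M_n$, is actually used.) Now I would build $r$ as a non-decreasing piecewise-linear function with $r(0)=0$ that passes from level $n-1$ to level $n$ with a small constant slope $\alpha_n\le 1$, the $\alpha_n$ decreasing to $0$ rapidly enough in terms of $(\epsilon_n)$; since all slopes are $\le 1$ this $r$ is $1$-Lipschitz with $r(t)\le t$, and it is unbounded, so it is a $0$-rough contraction.

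The key estimate is then the following. Let $p=tz$ and $q=sw$ with $t\ge s$ and $d_{\calO Z}(p,q)\le R$, so $t-s\le R$ and $s\,d_Z(z,w)\le R$. The points $\radcon(p)=r(t)z$, $r(s)z$ and $\radcon(q)=r(s)w$ all lie in $B_n$ for $n:=\lceil r(t)\rceil$, so inserting the middle point and using the modulus of continuity $\omega_n\le M_n$ of $F|_{B_n}$ we get
\begin{align*}
 d_V\bigl(F\radcon(p),F\radcon(q)\bigr)\le\omega_n\bigl(r(t)-r(s)\bigr)+\omega_n\bigl(r(s)\,d_Z(z,w)\bigr).
\end{align*}
If $r(t)$ stays below a threshold $N(R)$, the right-hand side is at most $2\,\diam F(B_{N(R)})<\infty$. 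Otherwise $n$ is large, and the slow growth of $r$ forces both the radial displacement $r(t)-r(s)$ (controlled by the slope of $r$ near level $n$, hence $\lesssim\alpha_nR$) and the spherical displacement $r(s)\,d_Z(z,w)\le\tfrac{r(s)}{s}R$ below $\epsilon_n$ as soon as $n\ge N(R)$, so both terms on the right are $\le1$. In either case $d_V(F\radcon(p),F\radcon(q))$ is bounded by a function of $R$ alone, which is exactly bornologousness; hence $F\circ\radcon$ is coarse.

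The step I expect to be the real obstacle is the quantitative matching in the construction of $r$. A radial contraction shrinks the radial coordinate no more than a $1$-Lipschitz map does, so the residual radial displacement cannot be killed by contracting alone: the slopes $\alpha_n$ must themselves tend to $0$, and fast enough relative to the thresholds $\epsilon_n$ read off from $F$, so that past the cutoff $N(R)$ \emph{both} the radial displacement and the (more easily tamed) spherical displacement fall below $\epsilon_n$ and the two-case split closes. Getting these two decay rates to cooperate, while keeping $r$ a genuine $0$-rough contraction, is the technical core; the rest is the routine case analysis above.
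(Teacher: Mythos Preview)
The paper does not prove this proposition; it is quoted from \cite[Lemma~4.12]{MR1388312}, so there is no in-paper argument to compare against. Your overall strategy --- build $r$ piecewise linear with slopes $\alpha_n\to 0$ calibrated against the behaviour of $F$ on the compact balls $B_n$, then split into a bounded-level case and a large-level case --- is the standard one and is essentially how the cited source proceeds.

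There is, however, a genuine slip. You write ``pseudocontinuous (in particular, continuous)'' and later gloss pseudocontinuity as ``uniform continuity on compacta''. The implication runs the other way: the paper explicitly remarks that continuous maps are pseudocontinuous, not conversely, and its principal example $\expe\colon\calO\partial Y\to Y$ is pseudocontinuous but \emph{not} continuous, since it depends on a non-canonical choice of representative $\eta_y\in y$ for each boundary point $y$. If $F|_{B_n}$ were genuinely uniformly continuous for every $n$, then $F$ would be continuous on all of $\calO Z$, which is more than the hypothesis gives you.

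You should consult the actual definition (the paper points to \cite[Section~4]{MR1388312} and \cite[Definition~5.2]{FO-CCH}). Whatever its precise form, it is strictly weaker than uniform continuity on bounded sets, and what it hands you on each $B_n$ involves a fixed additive slack rather than an arbitrarily small modulus. Once you have the correct statement, your construction of $r$ and the two-case split go through with only cosmetic adjustments: in the large-level case the target ``$\le 1$'' becomes a fixed constant, and in the small-level case the finiteness of $\diam F(B_{N(R)})$ follows by chaining along radial segments rather than by continuity on a compact set. The quantitative matching of the $\alpha_n$ against the pseudocontinuity thresholds, which you correctly flag as the crux, is unaffected.
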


\begin{remark}
\label{rem:r-change}
 Let $\epsilon>0$ be a constant which will be fixed in the beginning of Section~\ref{sec:expon-logar-map}.
 We can assume without loss of generality that the function $r$ 
 in Proposition~\ref{prop:psedoconti} satisfies the following. 
 For $t,t'\in \Rp$ with $t^{\epsilon},t'^{\epsilon}\in r^{-1}([1,\infty))$, 
 we have
 \begin{align*}
  \abs{r(t^\epsilon)^{1/\epsilon}-r(t'^\epsilon)^{1/\epsilon}} \leq \abs{t-t'}.
 \end{align*}
 Indeed, we define the map $r'\colon \Rp\to\Rp$ by $r'(t):=r(t)$ for 
 $t\in r^{-1}([0,1))$, and $r'(t):=r(t)^\epsilon$ for $t\in r^{-1}([1,\infty))$. 
 Then $r'(t)\leq r(t)$ for all $t\in \Rp$ and 
 for $t,t'\in \Rp$ with $t^{\epsilon},t'^{\epsilon}\in r^{-1}([1,\infty))$, 
 \begin{align*}
  \abs{r'(t^\epsilon)^{1/\epsilon}-r'(t'^\epsilon)^{1/\epsilon}} 
  =\abs{r(t^\epsilon)-r(t'^\epsilon)} 
  \leq \abs{t^\epsilon-t'^\epsilon}\leq \abs{t-t'}.
 \end{align*}
 If the radial contraction associated to $r$ satisfies the statement
 of Proposition~\ref{prop:psedoconti}, so does the one associated to $r'$.
 Therefore we can replace $r$ by $r'$.
\end{remark}

\begin{lemma}
\label{lem:r-eps-rough-contr}
 Let $r\colon \Rp\to\Rp$ be a 0-rough contraction satisfying 
 Remark~\ref{rem:r-change}. 
 Set $\delta:=\sup\{u\in \Rp:r(u^\epsilon)\leq 1\}$.
 For $u,u'\in \Rp$, we have 
 \begin{align*}
  \abs{r(u^\epsilon)^{1/\epsilon} -r(u'^\epsilon)^{1/\epsilon}}\leq 
  \abs{u-u'}+2\delta. 
  \end{align*}
 Especially, the map $t\mapsto r(t^\epsilon)^{1/\epsilon}$ 
 is a $2\delta$-rough contraction.
\end{lemma}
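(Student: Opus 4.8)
The plan is to reduce the whole statement to two short estimates separated by the threshold $\delta$. First I would record that $\delta$ is well-defined and finite: the set $\{u\in\Rp : r(u^\epsilon)\le 1\}$ contains $0$ (since $r(0)=0$) and is bounded above, because $r(t)\to\infty$ as $t\to\infty$ forces $r(u^\epsilon)\to\infty$ as $u\to\infty$; hence $0\le\delta<\infty$. Next, abbreviating $h(t):=r(t^\epsilon)^{1/\epsilon}$, I would note two elementary facts used throughout: $h$ is non-decreasing, being a composition of the non-decreasing maps $t\mapsto t^\epsilon$, $r$, and $t\mapsto t^{1/\epsilon}$; and $h(t)\le t$ for all $t$, since $r(t^\epsilon)\le t^\epsilon$ by the contraction property $r\le\mathrm{id}$.

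For the displayed inequality, by symmetry I would assume $u\le u'$, so that $|h(u)-h(u')|=h(u')-h(u)\ge 0$, and split into two cases. If $u\le\delta$, I would use only the crude bound $h(u')-h(u)\le h(u')\le u' = (u'-u)+u\le |u-u'|+\delta\le |u-u'|+2\delta$. If instead $u>\delta$, then $u'\ge u>\delta=\sup\{v:r(v^\epsilon)\le 1\}$, so neither $u$ nor $u'$ belongs to that set; that is, $r(u^\epsilon)>1$ and $r(u'^\epsilon)>1$, which is exactly the hypothesis $u^\epsilon, u'^\epsilon\in r^{-1}([1,\infty))$ under which Remark~\ref{rem:r-change} applies and gives $|h(u)-h(u')|\le |u-u'|\le |u-u'|+2\delta$. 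Combining the cases yields the claimed bound.

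For the final assertion I would just verify the defining conditions of a $2\delta$-rough contraction for $h$: $h(0)=r(0)^{1/\epsilon}=0$; $h(t)=r(t^\epsilon)^{1/\epsilon}\to\infty$ as $t\to\infty$, because $t^\epsilon\to\infty$ and $r\to\infty$; $h(t)\le t$ and $h$ non-decreasing, as already noted; and the additive-Lipschitz inequality $|h(t)-h(s)|\le |t-s|+2\delta$ is precisely what was proved above.

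I do not anticipate a real obstacle; the argument is essentially bookkeeping. The one point that requires a little care is the case split: one must observe that $u>\delta$ forces the \emph{strict} inequality $r(u^\epsilon)>1$, so that $u^\epsilon\in r^{-1}([1,\infty))$ and Remark~\ref{rem:r-change} is genuinely available, while on the complementary range $u\le\delta$ that Remark is useless and one must instead fall back on the coarse bound $h\le\mathrm{id}$. Choosing that coarse estimate on $[0,\delta]$, rather than trying to force a quantitative bound everywhere, is the small trick that produces the additive error $2\delta$ cleanly.
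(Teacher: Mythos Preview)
Your proof is correct and follows essentially the same approach as the paper's: both split according to whether the smaller of the two inputs lies in $\{v:r(v^\epsilon)\le 1\}$, use $r\le\mathrm{id}$ to handle the low range, and invoke Remark~\ref{rem:r-change} on the high range. Your first case is marginally sharper (you obtain $|u-u'|+\delta$ there by exploiting monotonicity of $h$, while the paper uses the cruder $|h(u)-h(u')|\le h(u)+h(u')\le u+u'\le |u-u'|+2\delta$), and you include the explicit verification of the remaining rough-contraction axioms that the paper leaves implicit; but the structure is the same.
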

\begin{proof}
 If $\min\{r(u^\epsilon),r(u'^\epsilon)\}\leq 1$, then
 \begin{align*}
  \abs{r(u^\epsilon)^{1/\epsilon} - r(u'^\epsilon)^{1/\epsilon}}
   \leq r(u^\epsilon)^{1/\epsilon} + r(u'^\epsilon)^{1/\epsilon}
  \leq u+u'
   \leq \abs{u-u'} + 2\delta.
 \end{align*}
 If $\min\{r(u^\epsilon),r(u'^\epsilon)\}> 1$, 
 then by Remark~\ref{rem:r-change},
 \begin{align*}
  \abs{r(u^\epsilon)^{1/\epsilon} - r(u'^\epsilon)^{1/\epsilon}}
  \leq \abs{u-u'}.
 \end{align*}
\end{proof}


\subsection{Exponential and logarithmic map}
\label{sec:expon-logar-map}
In the rest of this section, let $X$ and $Y$ be proper 
$(\lambda,k,E,C,\theta,\calL)$-coarsely convex spaces.
We fix a positive number $0 < \epsilon < 1$ satisfying the statement
of Proposition~\ref{prop:q-met2met}.
Let $d_{\epsilon,\partial_a X}$ and $d_{\epsilon,\partial_b Y}$ 
be a metric given by Proposition~\ref{prop:q-met2met} 
for $\partial_a X$ and $\partial_b Y$, respectively.
Let $K$ be the constant in the statement of Proposition~\ref{prop:q-met2met}.

First, we review the construction of the exponential map and the logarithmic map
for coarsely convex spaces. We refer~\cite[Section 5]{FO-CCH} for details.

The exponential map $\expe \colon \OdbY \to Y$ is defined as follows. 
For each $y\in \partial_b Y$, we choose a quasi-geodesic ray
$\eta_y\in \xcalL{Y}_b^\infty$ with $\eta_y\in y$. 
Then for $t\in \Rp$, we define 
$\expe(ty):=\eta_y(t^{\frac{1}{\epsilon}})$. 

We remark that $\expe$ is proper,
however, not necessarily bornologous. It is shown 
that $\expe$ is pseudocontinuous (\cite[Lemma 5.4]{FO-CCH}).




Now we introduce the logarithmic map. 
We define the subspace $\Xvis\subset X$ as follows.
\begin{align*}
 \Xvis:=\bigcup_{\gamma\in \xcalL{X}_a^\infty} \gamma(\Rp).
\end{align*}

The logarithmic map $\loge \colon \Xvis \to \OdaX$ 
is defined as follows. For $v\in \Xvis$, we choose a geodesic ray
$\gamma_v\in \xcalL{X}_a^\infty$ and a parameter $t_v\in \Rp$ such that 
$\gamma_v(t_v) = v$. Then we define $\loge(v):=t^{\epsilon}[\gamma_v]$.
The map $\loge$ is a coarse map (\cite[Proposition 5.6]{FO-CCH}).

Since the domain of $\loge$ is a subspace of $X$,
we need to deform $X$ to the subspace $\Xvis$.

\begin{proposition}[{\cite[Section 5.5]{FO-CCH}}]
\label{prop:deform}
 There exists a 1-rough contraction $\chi\colon \Rp\to\Rp$ such that
 the map $\varphi\colon X\to X$
 given by
 \begin{align*}
  \varphi(v):= \gamma_v(\chi(T_v)) \quad (v\in X).
 \end{align*}
 is a coarse map, and $\varphi(X)\subset \Xvis$.
 Here $v\in X$, $\gamma_v\in \xcalL{X}_a$, $T_v\in \Rp$, 
 and $v=\gamma_v(T_v)$. Moreover, $\varphi$ is coarsely homotopic 
 to the identity.
\end{proposition}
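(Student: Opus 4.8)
The plan is to build the rough contraction $\chi$ from a compactness argument, record two cheap estimates on the parameters $T_v$, and then get each of the three assertions --- $\varphi$ coarse, $\varphi(X)\subseteq\Xvis$, $\varphi\simeq\id_X$ --- from essentially the same computation with the coarsely convex inequality. First I would fix the data: for each $v\in X$, using Definition~\ref{def:cBNC}(\ref{qconn})$^q$, choose a good quasi-geodesic segment $\gamma_v\in\xcalL{X}_a$, $\Dom\gamma_v=[0,a_{\gamma_v}]$, and $T_v\in[0,a_{\gamma_v}]$ with $\gamma_v(T_v)=v$; when $\ds{a,v}$ is large I would take $\gamma_v$ of essentially maximal available length, for the reason explained below. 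Two estimates are then immediate. Since $\gamma_v$ is a $(\lambda,k)$-quasi-geodesic, $\tfrac1\lambda T_v-k\le\ds{a,v}\le\lambda T_v+k$, so $T_v$ controls and is controlled by $\ds{a,v}$ up to affine error; in particular $T_v\to\infty$ whenever $\ds{a,v}\to\infty$. And by Definition~\ref{def:cBNC}(\ref{qparam-reg})$^q$ applied to $\gamma_v,\gamma_w$ (which both start at $a$), $\abs{T_v-T_w}\le\theta(\ds{v,w})$, so $v\mapsto T_v$ is bornologous, and since $\chi$ will be a rough contraction, $\abs{\chi(T_v)-\chi(T_w)}\le\theta(\ds{v,w})+1$ as well.

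The main work --- and the main obstacle --- is the construction of $\chi$ together with the claim $\varphi(X)\subseteq\Xvis$. I would use properness of $X$ and a diagonal argument of the kind behind Lemma~\ref{lem:Ascoli-Arzela} to produce a non-decreasing proper function $N\colon\Rp\to\Rp$ such that every $\gamma\in\xcalL{X}_a$ with $\Dom\gamma\supseteq[0,N(s)]$ satisfies $\gamma(s)\in\Xvis$: the initial portion of a long enough good quasi-geodesic issuing from $a$ cannot lie off all the rays, because a sequence of ever longer such segments subconverges to a member of $\xcalL{X}_a^\infty$ carrying that portion. The delicate point is to make this quantitative and, by choosing the $\gamma_v$ along the limiting rays, to land the retracted point on an honest ray rather than merely near $\Xvis$; this is exactly where properness is indispensable. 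I would then set $\chi(t):=\bigl(\sup\{s\in\Rp:N(s)\le t\}-1\bigr)\vee0$, which is non-decreasing with $\chi(t)\le t$ and $\chi(t)\to\infty$ and, after the regularisation of Lemma~\ref{lem:r-eps-rough-contr}, is a $1$-rough contraction; with it, $\varphi(v)=\gamma_v(\chi(T_v))\in\Xvis$ since $a_{\gamma_v}\ge T_v\ge N(\chi(T_v))$.

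The remaining verifications are routine. For metric properness, $\ds{\varphi(v),a}=\ds{\gamma_v(\chi(T_v)),\gamma_v(0)}\ge\tfrac1\lambda\chi(T_v)-k\to\infty$ as $\ds{a,v}\to\infty$. For bornologousness, put $\mu:=\min\{\chi(T_v),\chi(T_w)\}\le\min\{T_v,T_w\}$; then
\[
 \ds{\varphi(v),\varphi(w)}\le\ds{\gamma_v(\chi(T_v)),\gamma_v(\mu)}+\ds{\gamma_v(\mu),\gamma_w(\mu)}+\ds{\gamma_w(\mu),\gamma_w(\chi(T_w))},
\]
the two outer terms are $\le\lambda(\theta(\ds{v,w})+1)+k$ by the quasi-geodesic property and the middle term is $\le E(\ds{v,w}+\lambda\tilde{\theta}(\ds{v,w})+k_1)+D$ by Lemma~\ref{lem:t-ab}, so the sum is a non-decreasing function of $\ds{v,w}$. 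Finally, for the coarse homotopy, set $T_v':=T_v-\chi(T_v)$, which is bornologous in $v$ by the estimates above, let $Z:=\{(v,t):0\le t\le T_v'\}\subseteq X\times\Rp$, and define $h(v,t):=\gamma_v\bigl((T_v-t)\vee\chi(T_v)\bigr)$; then $h(v,0)=v$ and $h(v,T_v')=\varphi(v)$, and $h$ is a coarse map by the same two arguments (restricting to the parameter minimum as above for the bornologous estimate, and using $\ds{h(v,t),a}\ge\tfrac1\lambda((T_v-t)\vee\chi(T_v))-k$ for metric properness). This produces the coarse homotopy $\varphi\simeq\id_X$ and finishes the proof.
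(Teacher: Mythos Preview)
The paper does not supply its own proof of this proposition; it is quoted from~\cite[Section~5.5]{FO-CCH}. So there is no in-paper argument to compare against, and I evaluate your proposal on its merits. Your verifications that $\varphi$ is metrically proper and bornologous, and that the map $h$ is a coarse homotopy to the identity, are correct and follow the expected lines using Lemma~\ref{lem:t-ab} and the parameter regularity~(\ref{qparam-reg})$^q$.

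The genuine gap is in your construction of $\chi$ and the claim $\varphi(X)\subseteq\Xvis$. You assert that properness yields a proper function $N\colon\Rp\to\Rp$ such that every $\gamma\in\xcalL{X}_a$ with $\Dom\gamma\supseteq[0,N(s)]$ satisfies $\gamma(s)\in\Xvis$. This is false. By the definition in Section~\ref{sec:coars-conv-space}, every $\eta\in\xcalL{X}_a^\infty$ satisfies $\eta(t)=\eta(\lfloor t\rfloor)$, so $\Xvis=\bigcup_{\eta}\eta(\N)$ consists only of integer-time values of rays. Take $X=\Rp$, $a=0$, and $\xcalL{X}$ the geodesic segments: the unique ray from $0$ is $\eta(t)=\lfloor t\rfloor$, hence $\Xvis=\N$, while every $\gamma\in\xcalL{X}_0$ is $t\mapsto t$, so $\gamma(s)\in\Xvis$ iff $s\in\N$ --- regardless of how long $\gamma$ is. Your diagonal argument shows only that long segments are \emph{close} to rays on initial portions; it cannot force the specific point $\gamma(s)$ to lie \emph{on} a ray. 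You flag this as ``the delicate point'', but the proposed fix (``choosing the $\gamma_v$ along the limiting rays'') does not work either: $\gamma_v\in\xcalL{X}_a$ must satisfy $\gamma_v(T_v)=v$, and if $v\notin\Xvis$ there is no ray through $v$ along which to choose it, nor any reason a segment through $v$ should hit $\Xvis$ at the particular parameter $\chi(T_v)$ you prescribe. The construction of $\chi$ --- and the coordinated choice of the $\gamma_v$ --- must be organized so that the output is exactly a ray value; this is the substantive content of the cited result and is not supplied by your argument.
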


\subsection{Radial extension}
\label{sec:radial-extension-1}
In the rest of this section, 
let $f\colon \partial_a X\to \partial_b Y$ be a continuous map.


Let $\calO f\colon \OdaX \to \OdbY$ be the map given by 
Definition~\ref{def:calOf}. Since $\calO f$ is continuous and 
$\expe\colon \OdbY \to Y$ is pseudocontinuous, the composite 
$\expe\circ \calO f$ is pseudocontinuous. Thus by Proposition~\ref{prop:psedoconti},
there exists a 0-rough contraction 
$r\colon \Rp\to\Rp$ such that
the composite $\expe\circ \calO f \circ \radcon$ is a coarse map, 
where $\radcon$ is a radial contraction $\radcon\colon \OdaX\to \OdaX$ 
associated to $r$.

Now we define the map 
$\rad f \colon X\to Y$ by the composite 
\begin{align}
 \rad f:= \expe \circ \calO f \circ \radcon \circ \loge \circ \varphi
\end{align}
where $\varphi$ is the map given in Proposition~\ref{prop:deform}.
We call it 
the {\itshape radial extension of $f$ associated to the map $r$}.
Since $\rad f$ is the composite of coarse maps, it is a coarse map.
In Proposition~\ref{prop:chmtpr1r2}, we will show that the coarse homotopy
class of $\rad f$ does not depend on the choice of the map $r$.

The main purpose of this section is to prove the following.

\begin{theorem}
\label{thm:rad-ext} 
Let $X$ and $Y$ be coarsely convex space with system of good quasi-geodesics
 $\xcalL{X}$ and $\xcalL{Y}$, respectively. 
 Let $f\colon \partial X\rightarrow \partial Y$ be a continuous map. Then
 there exists a 0-rough contraction $r\colon \Rp\to \Rp$ 
  such that the radial extension
 $\rad f$ of $f$ associated to $r$ is $\xcalL{X}$-radial and 
 $\xcalL{X}$-$\xcalL{Y}$-equivariant. Moreover,
 \begin{enumerate}[$($i$)$]
  \item \label{item:10}
	For a continuous map $f\colon \partial X\rightarrow \partial Y$,
	the induced map $\partial \rad f\colon \partial X\to \partial Y$ is equal to $f$.
  \item \label{item:11}
 Let $F\colon X\to Y$ be a visual
 $\rho$-$T$-$\xcalL{X}_a^\infty$-$\xcalL{Y}_b^\infty$-equivariant map.
 Then $\rad \partial F$ is coarsely homotopic to $F$.
 \end{enumerate} 
\end{theorem}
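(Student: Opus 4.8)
The plan is to analyze the composite $\rad f = \expe \circ \calO f \circ \radcon \circ \loge \circ \varphi$ piece by piece, tracking how each factor acts along good quasi-geodesics. First I would show $\rad f$ is $\xcalL{X}$-radial. Fix $\gamma \in \xcalL{X}_a$ with $\Dom\gamma = [0,L]$; the point $\gamma(t)$ gets sent by $\varphi$ to a point at roughly $\xcalL{X}_a$-distance $\chi(t)$ from $a$ lying on $\Xvis$, then $\loge$ records it as $\chi(t)^\epsilon[\gamma']$ for some quasi-geodesic ray $\gamma'$ through it, then $\radcon$ contracts the radial coordinate to $r(\chi(t)^\epsilon)$, and finally $\expe \circ \calO f$ sends $r(\chi(t)^\epsilon)\,[\gamma']$ to $\eta_{f[\gamma']}\bigl(r(\chi(t)^\epsilon)^{1/\epsilon}\bigr)$, a point at distance $\approx r(\chi(t)^\epsilon)^{1/\epsilon}$ from $b$ along a ray in $\xcalL{Y}_b^\infty$. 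So $\ds{\rad f(\gamma(t)), b} \gtrsim r(\chi(t)^\epsilon)^{1/\epsilon}$ up to additive error, and by Lemma~\ref{lem:r-eps-rough-contr} (using Remark~\ref{rem:r-change}) the map $t \mapsto r(\chi(t)^\epsilon)^{1/\epsilon}$ is a rough contraction; composing with the rough contraction $\chi$ gives a rough contraction $\sigma$ with $\sigma(t) \leq \ds{\rad f(\gamma(t)), \rad f(a)}$, so $\rad f$ is weakly $\sigma$-$\xcalL{X}_a$-radial, hence (since it is a coarse, thus large scale Lipschitz-controlled, composite — or after invoking Lemma~\ref{lem:weak-rad-rad} once we know it is large scale Lipschitz) $\xcalL{X}$-radial.

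Next I would establish $\xcalL{X}$-$\xcalL{Y}$-equivariance. The key observation is that along a good quasi-geodesic $\gamma \in \xcalL{X}_a$, the image $\rad f(\gamma(t))$ stays uniformly close to the single quasi-geodesic ray $\eta := \eta_{f[\gamma']} \in \xcalL{Y}_b^\infty$ (for the ray $\gamma'$ through the terminal point of $\gamma$, or a representative determined by $\gamma$), reparametrized by $\rho(t) := r(\chi(t)^\epsilon)^{1/\epsilon}$ — because $\loge$ of points along $\gamma$ lands near a fixed ray $[\gamma']$ in $\partial_a X$ with controlled radial coordinates (using Lemma~\ref{lem:univ-const}(\ref{lem:maximizer}) and the coarse-convexity estimates), $\calO f$ moves this along $\partial_b Y$ to the fixed point $f[\gamma']$, and $\expe$ then walks along $\eta$. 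To compare with a $\xcalL{Y}_b$-quasi-geodesic $\eta''$ with $\eta''(L_{\eta''}) = \rad f(\gamma(L_\gamma))$, I would use Lemma~\ref{lem:univ-const}(\ref{lem:ray-same-param}) (or (\ref{lem:maximizer})) to see that $\eta''$ and the initial segment of $\eta$ stay within $\DA$ of each other on their common parameter range, giving the required bound $\ds{\rad f(\gamma(t)), \eta''(\rho(t))} < H$ for a uniform $H$.

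For part (\ref{item:10}) I would use Lemma~\ref{lem:LLoo-f}: having just shown $\rad f$ is visual (via Theorem~\ref{thm:visual-Lip}, since it is large scale Lipschitz, $\xcalL{X}$-radial, and $\xcalL{X}$-$\xcalL{Y}$-equivariant) and $\rho$-$T$-$\xcalL{X}_a^\infty$-$\xcalL{Y}_b^\infty$-equivariant (by Lemma~\ref{lem:LL-LLoo}), for $\gamma \in \xcalL{X}_a^\infty$ with $[\gamma] = x$ the image follows $\eta_{f(x)}$, so $\partial\rad f(x) = [\eta_{f(x)}] = f(x)$. For part (\ref{item:11}), starting from a visual $\rho$-$T$-$\xcalL{X}_a^\infty$-$\xcalL{Y}_b^\infty$-equivariant $F$, part (\ref{item:10}) gives $\partial \rad(\partial F) = \partial F$, and then I would build an explicit coarse homotopy between $F$ and $\rad \partial F$: both maps send $\gamma(t)$ (for $\gamma \in \xcalL{X}_a^\infty$) to points that track the \emph{same} ray $\eta_{\partial F[\gamma]}$ in $\xcalL{Y}_b^\infty$ up to bounded error, just with possibly different rough-contraction reparametrizations, so a straight-line-in-parameter homotopy along that ray (using the coarse convexity of $Y$ to keep intermediate points controlled, and composing with $\varphi \simeq \id$) does the job; one also needs Proposition~\ref{prop:chmtpr1r2}-type independence from the choice of $r$. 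The main obstacle I expect is the equivariance step: pinning down that \emph{all} points of $\gamma([0,L])$ map near one fixed ray $\eta$ with a genuinely monotone, slope-$\leq 1$, additively-controlled reparametrization $\rho$ requires carefully combining the rough-contraction property of $r$ (Lemma~\ref{lem:r-eps-rough-contr}) with the ambiguity in the choices of $\gamma_v$, $t_v$ in the definition of $\loge$ and of $\eta_y$ in the definition of $\expe$ — all the "choice" maps only agree up to the universal constant $\DA$, and one must check these bounded discrepancies do not accumulate.
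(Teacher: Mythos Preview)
Your overall architecture matches the paper's: analyze $\radp f = \expe\circ\calO f\circ\radcon\circ\loge$ separately from $\varphi$, then handle (\ref{item:10}) via the ray-tracking you describe and (\ref{item:11}) via a straight-line-in-parameter homotopy along the common target ray. The radial part and parts (\ref{item:10}), (\ref{item:11}) are essentially what the paper does.

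There is, however, a genuine gap in your equivariance argument, and you have mis-diagnosed the obstacle. You write that ``$\loge$ of points along $\gamma$ lands near a fixed ray $[\gamma']$ in $\partial_a X$'' and that the issue is checking that bounded $\DA$-discrepancies ``do not accumulate''. Neither is accurate. As $w=\gamma(t)$ varies, the ray $\gamma_w\in\xcalL{X}_a^\infty$ chosen by $\loge$ need \emph{not} stay near any fixed boundary point; one only has $(\gamma_w\mid\gamma_x)_a\gtrsim t$ via Lemma~\ref{lem:prod-est-bigon}. After applying $f$, the target rays $\eta_w$ and $\eta_x$ satisfy only $(\eta_w\mid\eta_x)_b\geq\psi(\text{const}\cdot t)$, where $\psi$ is the modulus of uniform continuity of $f$ from Lemma~\ref{lem:rate-conti}. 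For Lemma~\ref{lem:univ-const}(\ref{lem:maximizer}) to give $\ds{\eta_w(\rho(t)),\eta_x(\rho(t))}\leq\DA$, you need $\rho(t)\leq(\eta_w\mid\eta_x)_b$, i.e.\ $r(t^\epsilon)^{1/\epsilon}\leq\DA^{-1}\psi(\text{const}\cdot t)$. This is an \emph{additional constraint on $r$}, beyond what Proposition~\ref{prop:psedoconti} provides; without it, $\eta_w(\rho(t))$ and $\eta_x(\rho(t))$ can be arbitrarily far apart and equivariance fails.

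So the missing idea is: the $0$-rough contraction $r$ must be chosen slow enough, governed by the uniform-continuity modulus $\psi$ of $f$, as in Proposition~\ref{prop:radp-LLequiv}. Once that constraint is imposed, your sketch goes through as written; in particular Proposition~\ref{prop:chmtpr1r2} then lets you pass to any smaller $r$, which is what makes part~(\ref{item:11}) work with the assumption $r(t^\epsilon)^{1/\epsilon}\leq\rho(t)$.
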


In the rest of this section, let $\DA$ be a constant satisfying 
the statement of Lemma~\ref{lem:univ-const} for both $X$ and $Y$.

\begin{lemma}
 \label{lem:rate-conti}
 There exists a non-decreasing
 map $\psi\colon \Rp\to \Rp\cup\{\infty\}$ 
 with $\lim_{t\to\infty}\psi(t)= \infty$ 
 such that
 for $p,q \in \partial_a X$ with $p\neq q$, we have 
 \begin{align*}
  (f(p)\mid f(q))_b\geq \psi((p\mid q)_a).
 \end{align*}
\end{lemma}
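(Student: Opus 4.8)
The plan is to reduce the statement to the uniform continuity of $f$ and transport it through the comparison in Proposition~\ref{prop:q-met2met} between Gromov products and the visual metrics on the two boundaries. Fix $\epsilon$, the metrics $\dbdl{\epsilon}{\partial X}$ on $\partial_a X$ and $\dbdl{\epsilon}{\partial Y}$ on $\partial_b Y$, and a constant $0<K<1$ with
\[
 \tfrac1K (p\mid q)_a^{-\epsilon}\le \dbdl{\epsilon}{\partial X}(p,q)\le (p\mid q)_a^{-\epsilon},\qquad
 \tfrac1K (u\mid v)_b^{-\epsilon}\le \dbdl{\epsilon}{\partial Y}(u,v)\le (u\mid v)_b^{-\epsilon}.
\]
Since $X$ is proper, $\partial_a X$ is compact metrizable (it is closed in the compact space $X\cup\partial_a X$), so the continuous map $f$ is uniformly continuous; likewise $\partial_b Y$ is compact metrizable. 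Note also that if $p\neq q$ then $(p\mid q)_a<\infty$, since otherwise $\dbdl{\epsilon}{\partial X}(p,q)\le (p\mid q)_a^{-\epsilon}=0$.

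The main step is to convert uniform continuity into the assertion that for every $r>0$ there is $s>0$ such that $p\neq q$ and $(p\mid q)_a\ge s$ imply $(f(p)\mid f(q))_b\ge r$. Given $r>0$, put $\delta:=\tfrac1K r^{-\epsilon}$, use uniform continuity to choose $\delta'>0$ with $\dbdl{\epsilon}{\partial X}(p,q)\le\delta'\Rightarrow\dbdl{\epsilon}{\partial Y}(f(p),f(q))\le\delta$, and set $s:=(\delta')^{-1/\epsilon}$. Then $(p\mid q)_a\ge s$ forces $\dbdl{\epsilon}{\partial X}(p,q)\le(p\mid q)_a^{-\epsilon}\le s^{-\epsilon}=\delta'$, hence $\dbdl{\epsilon}{\partial Y}(f(p),f(q))\le\delta$, hence $(f(p)\mid f(q))_b^{-\epsilon}\le K\,\dbdl{\epsilon}{\partial Y}(f(p),f(q))\le r^{-\epsilon}$, which yields $(f(p)\mid f(q))_b\ge r$ (the degenerate case $f(p)=f(q)$, where $(f(p)\mid f(q))_b=\infty$, is trivially fine).

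Finally I would simply define
\[
 \psi(t):=\inf\bigl\{(f(p)\mid f(q))_b : p,q\in\partial_a X,\ p\neq q,\ (p\mid q)_a\ge t\bigr\},
\]
with the convention $\inf\emptyset=\infty$. It is non-decreasing, because the sets over which the infimum is taken shrink as $t$ increases. For $p\neq q$ the pair $(p,q)$ lies in the set defining $\psi\bigl((p\mid q)_a\bigr)$ (here we use $(p\mid q)_a<\infty$), so $(f(p)\mid f(q))_b\ge\psi\bigl((p\mid q)_a\bigr)$, as required. And the previous paragraph shows $\psi(s)\ge r$ for the value $s=s(r)$ produced there, whence $\psi(t)\to\infty$ as $t\to\infty$.

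I do not expect a genuine obstacle here: the argument is routine once uniform continuity is available. The only points needing a little care are keeping the two one-sided halves of the comparison in Proposition~\ref{prop:q-met2met} pointed in the correct directions, confirming that $\partial_a X$ really is compact metrizable so that continuity upgrades to uniform continuity, and handling the extended-real values $(p\mid q)_a=\infty$ and $(f(p)\mid f(q))_b\in\{0,\infty\}$; none of these is serious.
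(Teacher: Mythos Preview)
Your proof is correct and follows essentially the same approach as the paper: define $\psi(t)$ as the infimum of $(f(p)\mid f(q))_b$ over pairs with $(p\mid q)_a\ge t$, observe it is non-decreasing by definition, and use compactness of $\partial_a X$ to upgrade continuity of $f$ to uniform continuity, which via Proposition~\ref{prop:q-met2met} yields $\psi(t)\to\infty$. The paper's version is terser (it omits the explicit $\epsilon$--$\delta$ translation through the visual metrics and does not restrict to $p\neq q$ in the infimum, which is harmless since diagonal pairs contribute $\infty$), but the argument is the same.
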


\begin{proof}
 We define a map $\psi\colon \Rp\to\Rp\cup\{\infty\}$ by 
 \begin{align*}
  \psi(t):= \inf\{(f(p)\mid f(q))_b: 
	    (p,q)\in (\partial_a X)^2,\, (p\mid q)_a\geq t\},
  \quad (t\geq 0).
 \end{align*}
 By the definition, $\psi$ is non-decreasing. 
 Since $\partial_a X$ is compact, $f$ is uniformly continuous. 
 So by Proposition~\ref{prop:q-met2met}, 
 we have $\lim_{t\to \infty}\psi(t) = \infty$.
\end{proof}

We remark that if $\partial_a X$ has at least one accumulation point,
then the range of the map $\psi$ in Lemma~\ref{lem:rate-conti}
does not contain $\infty$.

Set 
$\radp f:=\expe \circ \calO f \circ \radcon \circ \loge \colon \Xvis\to Y$.
First we consider this map.



\begin{proposition}
\label{prop:radp-LLequiv}
 Suppose that the map $r\colon \Rp\to \Rp$ satisfies
 \begin{align*}
  r\left(t^\epsilon\right)^{1/\epsilon}\leq 
  \DA^{-1}
  \psi\left(
   \frac{\DA^{-1}(t-\bar{\theta}(0))}{E(\lambda\bar{\theta}(0) + k_1)}
  \right) \quad (\forall t\in \Rp).
 \end{align*}
 Then the map $\radp f$ is $\xcalL{X}$-radial and $\xcalL{X}$-$\xcalL{Y}$-equivariant.
\end{proposition}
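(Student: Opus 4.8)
The plan is to unwind the composite $\radp f=\expe\circ\calO f\circ\radcon\circ\loge$ and read off what it does along the good quasi-geodesics issuing from $a$. By the definitions of the four maps, if $\loge(v)=t_v^{\epsilon}[\gamma_v]$ (so $\gamma_v\in\xcalL{X}_a^{\infty}$ and $\gamma_v(t_v)=v$), then $\radp f(v)=\eta_{f([\gamma_v])}\bigl(r(t_v^{\epsilon})^{1/\epsilon}\bigr)$, where $\eta_y\in\xcalL{Y}_b^{\infty}$ is the ray chosen by $\expe$ to represent $y\in\partial_b Y$; in particular $\radp f(a)=\eta_{f([\gamma_a])}(0)=b$. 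Put $\rho(t):=r(t^{\epsilon})^{1/\epsilon}$, which by Lemma~\ref{lem:r-eps-rough-contr} is a $2\delta$-rough contraction (with $\delta=\sup\{u:r(u^{\epsilon})\le1\}$) and satisfies $\rho(t)\le t$. The $\xcalL{X}$-radial statement is then immediate: $\eta_{f([\gamma_v])}$ is a $(\lambda',k_1')$-quasi-geodesic ray from $b$, so $\ds{\radp f(v),b}\ge\frac1{\lambda'}\rho(t_v)-k_1'$, and since $\lvert t_v-t\rvert\le\tilde{\theta}(0)$ whenever $v=\gamma(t)$ for $\gamma\in\xcalL{X}_a$ (Proposition~\ref{prop:qgeod-ray-convex}), one checks that $\sigma(t):=\bigl(\frac1{\lambda'}\rho(t)-\frac1{\lambda'}(\tilde{\theta}(0)+2\delta)-k_1'\bigr)\vee 0$ is a rough contraction with $\sigma(t)\le\ds{\radp f(\gamma(t)),\radp f(a)}$.

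The work is in the equivariance. Fix $\gamma\in\xcalL{X}_a$ with $\Dom\gamma=[0,L_\gamma]$ and image in $\Xvis$; let $\hat\gamma\in\xcalL{X}_a^{\infty}$ and $\hat t$ be the ray and parameter picked by $\loge$ at the endpoint $\gamma(L_\gamma)$, and set $z:=f([\hat\gamma])\in\partial_b Y$, so $\radp f(\gamma(L_\gamma))=\eta_z(\rho(\hat t))$. The core of the proof is the claim that $\radp f\circ\gamma$ uniformly shadows $\eta_z\circ\rho$ on $[0,L_\gamma]$, that is, $\ds{\radp f(\gamma(t)),\eta_z(\rho(t))}\le H_0$ for a constant $H_0$ depending only on the structure constants and $\delta$. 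To prove it, write $v=\gamma(t)$ and compare $\eta_{f([\gamma_v])}$ with $\eta_z$ at the parameter $\rho(t_v)$: because $\hat\gamma$ agrees with $\gamma$ at $\gamma(L_\gamma)$ it stays $\DA$-close to $\gamma$ up to parameter $L_\gamma\ge t$ (Lemma~\ref{lem:univ-const}~(\ref{lem:ray-same-param})), and $\gamma_v$ agrees with $\gamma$ at $v$; hence $\gamma_v$ and $\hat\gamma$ pass within $O(\DA)$ of each other at parameters $\approx t$, so Lemma~\ref{lem:prod-est-bigon} bounds $([\gamma_v]\mid[\hat\gamma])_a$ below by a quantity growing linearly in $t$, Lemma~\ref{lem:rate-conti} converts this into a lower bound $\psi(\cdot)$ for $(f([\gamma_v])\mid z)_b$ (trivial if $[\gamma_v]=[\hat\gamma]$), and Lemma~\ref{lem:univ-const}~(\ref{lem:D3}),~(\ref{lem:maximizer}) then give $\ds{\eta_{f([\gamma_v])}(s),\eta_z(s)}\le\DA$ whenever $s\le\DA^{-1}(f([\gamma_v])\mid z)_b$. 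The hypothesis imposed on $r$ is exactly the inequality ensuring $s=\rho(t_v)$ stays below this threshold — the outer $\DA^{-1}$ accounts for the factor in Lemma~\ref{lem:univ-const}~(\ref{lem:D3}), and the inner $\DA^{-1}$ together with the shift $-\tilde{\theta}(0)$ in the argument of $\psi$ absorb the replacement of $t$ by $t_v$ and the $O(\DA)$ slack. Finally $\ds{\eta_z(\rho(t_v)),\eta_z(\rho(t))}\le\lambda'(\tilde{\theta}(0)+2\delta)+k_1'$ since $\lvert\rho(t_v)-\rho(t)\rvert\le\tilde{\theta}(0)+2\delta$ by Lemma~\ref{lem:r-eps-rough-contr}, and adding the two bounds produces $H_0$.

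Given the shadowing, the equivariance follows formally: for $\mu\in\xcalL{Y}_b$ with $\Dom\mu=[0,L_\mu]$ and $\mu(L_\mu)=\radp f(\gamma(L_\gamma))$ we get $\ds{\mu(L_\mu),\eta_z(\rho(L_\gamma))}\le H_0$ from the shadowing at $t=L_\gamma$; since $\mu$ and $\eta_z$ both start at $b$, Lemma~\ref{lem:t-ab} gives $\ds{\mu(s),\eta_z(s)}\le H_1$ for $s\le\min\{L_\mu,\rho(L_\gamma)\}$ with $H_1=E'(H_0+\lambda'\tilde{\theta}'(H_0)+k_1')+D'$; and for $0\le t\le\min\{L_\gamma,\rho^{-1}(L_\mu)\}$ we have $\rho(t)\le\min\{L_\mu,\rho(L_\gamma)\}$ up to the rough-contraction slack, so the triangle inequality yields $\ds{\radp f\circ\gamma(t),\mu(\rho(t))}\le H_0+H_1$. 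Thus $\radp f$ is $\rho$-$(H_0+H_1)$-$\xcalL{X}_a$-$\xcalL{Y}_b$-equivariant, which (together with the first paragraph) is what is claimed.

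The one genuinely delicate point, and the only place the hypothesis on $r$ is used, is the comparison of $\eta_{f([\gamma_v])}$ with $\eta_z$ above: the logarithmic map records at each point of $\gamma$ some quasi-geodesic ray through that point which need not be asymptotic to $\hat\gamma$, so $\eta_{f([\gamma_v])}$ is a genuinely different ray in $Y$, and controlling how far out it agrees with $\eta_z$ is precisely what forces the throttling of $r$ by the modulus of continuity $\psi$ of $f$ supplied by Lemma~\ref{lem:rate-conti}. The remaining steps are routine estimates with the quasi-geodesic, rough-contraction and Gromov-product inequalities already in place.
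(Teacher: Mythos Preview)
Your overall architecture is the same as the paper's: compute $\radp f(v)=\eta_{f([\gamma_v])}(\rho(t_v))$ with $\rho(t)=r(t^\epsilon)^{1/\epsilon}$, deduce radiality from the quasi-geodesic lower bound, and for equivariance compare the moving ray $\eta_{f([\gamma_v])}$ to the fixed ray $\eta_z=\eta_{f([\hat\gamma])}$ attached to the endpoint, using the modulus $\psi$ of $f$ to control how far out these two rays agree. That is exactly the paper's idea, and your identification of this comparison as the one delicate point is correct.

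There is, however, a genuine mismatch between your route to the lower bound on $([\gamma_v]\mid[\hat\gamma])_a$ and the specific hypothesis on $r$. You propose: use Lemma~\ref{lem:univ-const}~(\ref{lem:ray-same-param}) to get $\hat\gamma$ within $\DA$ of $\gamma$, so that $\gamma_v$ and $\hat\gamma$ are within $O(\DA)$ of each other near parameter $t$, and then apply Lemma~\ref{lem:prod-est-bigon}. But prod-est-bigon with $\alpha=O(\DA)$ produces a denominator of the shape $E(\alpha+\lambda\tilde\theta(\alpha)+k_1)$, which does \emph{not} reduce to the $E(\lambda\tilde\theta(0)+k_1)$ appearing in the hypothesis; the inner $\DA^{-1}$ in the hypothesis cannot absorb a $\tilde\theta(2\DA)$ term. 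So with your bound, the stated assumption on $r$ is not obviously strong enough to force $\rho(t_v)\le(\eta_{f([\gamma_v])}\mid\eta_z)_b$. The paper instead applies Lemma~\ref{lem:prod-est-bigon} twice with $\alpha=0$ --- once to the pair $(\gamma_v,\gamma)$, which meet at $v$, and once to $(\hat\gamma,\gamma)$, which meet at the endpoint --- obtaining
\[
(\gamma_v\mid\gamma)_a,\ (\hat\gamma\mid\gamma)_a\ \ge\ \frac{t-\tilde\theta(0)}{E(\lambda\tilde\theta(0)+k_1)},
\]
and then combines via the quasi-ultrametric inequality Lemma~\ref{lem:univ-const}~(\ref{lem:qultm}), picking up exactly the inner $\DA^{-1}$. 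That is where that factor comes from, not from ``$O(\DA)$ slack''. If you replace your ray-same-param step by this two-step argument, your proof goes through verbatim with the stated hypothesis.

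A second, minor point: in the last paragraph you pass from the shadowing along $\eta_z$ to the good segment $\mu$ via Lemma~\ref{lem:t-ab}. This works, but the paper's route is shorter: since $\mu(L_\mu)=\radp f(\gamma(L_\gamma))=\eta_z(\rho(\hat t))$ exactly, Lemma~\ref{lem:univ-const}~(\ref{lem:ray-same-param}) already gives $\ds{\mu(s),\eta_z(s)}\le\DA$ for all $s\le L_\mu$, so the final constant is just $H_0+\DA$ rather than $H_0+H_1$.
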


We remark that $\Xvis$ is a subspace of the coarsely convex space,
however, both being $\sigma$-$\xcalL{X}_a$-radial and being
$\rho$-$H$-$\xcalL{X}_a$-$\xcalL{Y}_b$-equivariant make sense for the map
$\radp f$.

\begin{proof}
 Let $x\in \Xvis$. We choose $\gamma_x\in \xcalL{X}_a^\infty$, $T_x\in \Rp$
 such that $x=\gamma_x(T_x)$ and $\loge(x)=T_x^\epsilon[\gamma_x]$.
 For $[\gamma_x]$, we choose $\eta_x\in \xcalL{Y}_b^\infty$ such that
 $f([\gamma_x])=[\eta_x]$ and 
 $\expe(s[\eta_x])=\eta_x(s^{1/\epsilon})$ for $s\in \Rp$.
 Then we can compute $\radp f(x)$ as follows.
 \begin{align*}
  x\xmapsto{\loge} T_x^\epsilon[\gamma_x]
  \xmapsto{\radcon}
   r(T_x^{\epsilon}) [\gamma_x]
  \xmapsto{\calO f}
   r(T_x^{\epsilon})f([\gamma_x]) 
  \xmapsto{\expe}
   \eta_x(r(T_x^{\epsilon})^{1/\epsilon}).
 \end{align*}
 We have
 \begin{align}
  \label{eq:4}
  \ds{\eta_x(r(T_x^{\epsilon})^{1/\epsilon}),b}\geq 
  \frac{1}{\lambda}r(T_x^{\epsilon})^{1/\epsilon} - k_1.
 \end{align}
 Set 
 \begin{align*}
  \rho(t):=r(t^{\epsilon})^{1/\epsilon} \quad\text{and}\quad
  \sigma(t):=\left(\frac{1}{\lambda}\rho(t) - k_1\right)\vee 0.
 \end{align*}
 By (\ref{eq:4}), $\radp f$ is weakly $\sigma$-$\xcalL{X}_a$-radial, so 
 it is $\xcalL{X}$-radial, by Lemma~\ref{lem:weak-rad-rad}.

 Now let $\xi\in \xcalL{X}_a$ with $\Dom \xi = [0,L_{\xi}]$.
 We put $x:=\xi(L_{\xi})$. We use the above computation for $x$, 
 that is, $\radp f(x)= \eta_x(r(T_x^{\epsilon})^{1/\epsilon})$.
 Let $\eta\in \xcalL{Y}_b$ with $\Dom \eta =[0,L_\eta]$ such that 
 $\eta(L_\eta) = \eta_x(\rho(T_x))=\radp f(\xi(L_\xi))$.
 Since $\Dom \eta_x=\Rp$, 
 by Lemma~\ref{lem:univ-const}~(\ref{lem:ray-same-param}), 
 \begin{align}
 \label{eq:6}
  \ds{\eta_x(s),\eta(s)}\leq \DA, \quad (0\leq \forall s\leq L_\eta).
 \end{align} 

 We fix $t\in \Rp$ with $t\leq \min\{L_\xi, \rho^{-1}(L_\eta)\}$ 
 and set $w:=\xi(t)$.
 We choose $\gamma_w\in \xcalL{X}_a^\infty$ and $T_w\in \Rp$ such that
 $w=\gamma_w(T_w)$ and $\loge(w)=T_w^\epsilon [\gamma_w]$.
 Since $\gamma_w(T_w) = \xi(t)$, by Lemma~\ref{lem:prod-est-bigon},
 \begin{align*}
  (\gamma_w\mid \xi)_a
  \geq \frac{t-\tilde{\theta}(0)}{E(\lambda\tilde{\theta}(0) + k_1)}.
 \end{align*}
 Applying the same argument for $x=\gamma_x(T_x) = \xi(L_\xi)$, we have
 \begin{align*}
  (\gamma_x\mid \xi)_a 
  \geq \frac{L_\xi-\tilde{\theta}(0)}{E(\lambda\tilde{\theta}(0) + k_1)}.
 \end{align*}
 So we have
 \begin{align*}
  (\gamma_x\mid \gamma_w)_a\geq 
  \DA^{-1}\min\{(\gamma_x\mid \xi)_a, (\gamma_w\mid \xi)_a\}\geq 
  \frac{\DA^{-1}(t-\tilde{\theta}(0))}{E(\lambda\tilde{\theta}(0) + k_1)}.
 \end{align*}
 We choose $\eta_w\in \xcalL{Y}_b^\infty$ such that 
 $f([\gamma_w])=[\eta_w]$ and
 $\expe(s[\eta_w])=\eta_w(s^{1/\epsilon})$ for all $s\in \Rp$.
 By Lemma~\ref{lem:rate-conti},
 \begin{align*}
  ([\eta_x]\mid [\eta_w])_b&\geq \psi(([\gamma_x]\mid [\gamma_w])_a)
  \geq \psi\left(
   \frac{\DA^{-1}(t-\tilde{\theta}(0))}{E(\lambda\tilde{\theta}(0) + k_1)}
  \right).
 \end{align*}

 Since $\gamma_w(T_w)=\xi(t)$, we have $\abs{T_w-t}\leq \tilde{\theta}(0)$.
 By the assumption on $r$, we have
 \begin{align*}
  \rho(t) \leq 
  \DA^{-1}
  \psi\left(
   \frac{\DA^{-1}(t-\tilde{\theta}(0))}{E(\lambda\tilde{\theta}(0) + k_1)}
  \right)
  \leq (\eta_x\mid \eta_w)_b.
 \end{align*}
 Then by Lemma~\ref{lem:univ-const}~(\ref{lem:maximizer}), we have 
 \begin{align*}
  \ds{\eta_w(\rho(t)),\eta_x(\rho(t))}\leq \DA.
 \end{align*}
 Set $\delta:=\sup\{u\in \Rp:r(u^\epsilon)\leq 1\}$. 
 By Lemma~\ref{lem:r-eps-rough-contr}, 
 \begin{align*}
  \abs{\rho(T_w)-\rho(t)}\leq \abs{T_w-t}+2\delta
  \leq \tilde{\theta(0)} + 2\delta.
 \end{align*}
 Therefore 
 \begin{align}
  \label{eq:5}
  \ds{\radp f(\xi(t)),\eta_x(\rho(t))} &= \ds{\eta_w(\rho(T_w)),\eta_x(\rho(t))}
  \leq \ds{\eta_w(\rho(t)),\eta_x(\rho(t))} 
  + \lambda(\tilde{\theta}(0)+2\delta) + k_1\\
\notag
  &\leq \DA +\lambda(\tilde{\theta}(0)+2\delta) + k_1.
 \end{align}
 Set $H:=\DA + \lambda(\tilde{\theta}(0)+2\delta) + k_1+\DA$. 
%
%
 Since $\rho(t)\leq L_\eta$, by~(\ref{eq:5}) and (\ref{eq:6}), 
 we have
 \begin{align*}
  \ds{\radp f(\xi(t)),\eta(\rho(t))}\leq H.
 \end{align*}
 Hence the proof is done.                                                                                                                                          
\end{proof}

\begin{proposition}
\label{prop:vaphi-LLequiv}
 The map $\varphi \colon X\to X$
 is $\xcalL{X}$-radial and  $\xcalL{X}$-$\xcalL{Y}$-equivariant.
\end{proposition}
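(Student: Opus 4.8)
The plan is to verify both properties straight from the explicit description of $\varphi$ given in Proposition~\ref{prop:deform}: $\varphi(v)=\gamma_v(\chi(T_v))$, where $\chi$ is a $1$-rough contraction, $\gamma_v\in\xcalL{X}_a$, and $v=\gamma_v(T_v)$. (Since $\varphi$ maps $X$ into itself, the assertion ``$\xcalL{X}$-$\xcalL{Y}$-equivariant'' is to be read as $\xcalL{X}$-$\xcalL{X}$-equivariant; we also use that the construction is normalised so that $\varphi(a)=a$, the general case differing only by a universally bounded shift absorbed into additive constants.) The picture to keep in mind is that $\varphi$ reparametrises the radial coordinate of each good quasi-geodesic issuing from $a$ by $\chi$, so that $\varphi(\gamma(t))$ stays uniformly close to $\gamma(\chi(t))$. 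The only tools needed are the $(\lambda,k)$-quasi-geodesic inequalities, the parameter-regularity axiom (\ref{qparam-reg})$^q$, and Lemma~\ref{lem:univ-const}(\ref{lem:ray-same-param}) (two good quasi-geodesics issuing from the base point that meet somewhere remain within $\DA$ of each other up to the meeting parameter). This is essentially the mechanism used in the proof of Proposition~\ref{prop:radp-LLequiv}, but simpler, since here the relevant quasi-geodesics share endpoints exactly.

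\emph{Radiality.} Fix $\gamma\in\xcalL{X}_a$ with $\Dom\gamma=[0,L_\gamma]$ and $t\in[0,L_\gamma]$, and set $v:=\gamma(t)$. Computing $\ds{a,v}$ both along $\gamma$ and along $\gamma_v$ (both $(\lambda,k)$-quasi-geodesics issuing from $a$) gives $T_v\geq\lambda^{-2}t-2\lambda^{-1}k$; since $\chi$ is non-decreasing and $\gamma_v$ is a $(\lambda,k)$-quasi-geodesic with $\gamma_v(0)=a$,
\[
 \ds{\varphi(\gamma(t)),\varphi(a)}=\ds{\gamma_v(\chi(T_v)),a}\geq\lambda^{-1}\chi(T_v)-k\geq\lambda^{-1}\chi\bigl((\lambda^{-2}t-2\lambda^{-1}k)\vee 0\bigr)-k.
\]
Setting $\sigma(t):=\bigl(\lambda^{-1}\chi((\lambda^{-2}t-2\lambda^{-1}k)\vee 0)-k\bigr)\vee 0$, one checks that $\sigma(0)=0$, $\sigma(t)\to\infty$, $\sigma(t)\leq t$, and $\sigma$ inherits the estimate $\abs{\sigma(t)-\sigma(s)}\leq\abs{t-s}+1$ from $\chi$ (a $1$-rough contraction, precomposed with a $1$-Lipschitz affine map and postcomposed with affine and $\vee 0$ operations), so $\sigma$ is a rough contraction and $\varphi$ is $\sigma$-$\xcalL{X}_a$-radial. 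In view of Lemma~\ref{lem:weak-rad-rad} it would even suffice to establish the weak radial form here.

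\emph{Equivariance.} Fix $\gamma\in\xcalL{X}_a$ with $\Dom\gamma=[0,L_\gamma]$, put $x:=\gamma(L_\gamma)$ so that $\varphi(x)=\gamma_x(\chi(T_x))$ with $x=\gamma_x(T_x)$, and take $\eta\in\xcalL{X}_a$ with $\Dom\eta=[0,L_\eta]$ and $\eta(L_\eta)=\varphi(x)$. Let $\rho(s):=(\chi(s)-\theta(0)-1)\vee 0$, which is again a $1$-rough contraction. Fix $t$ in the admissible range $0\leq t\leq\min\{L_\gamma,\rho^{-1}(L_\eta)\}$ and set $v:=\gamma(t)$. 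The estimate proceeds by the triangle inequality along the chain $\varphi(\gamma(t))=\gamma_v(\chi(T_v))$, then $\gamma_v(\rho(t))$, then $\gamma(\rho(t))$, then $\gamma_x(\rho(t))$, then $\eta(\rho(t))$: the first comparison is at most $2\lambda(\theta(0)+1)+k$, using $\abs{\chi(T_v)-\rho(t)}\leq\abs{T_v-t}+\theta(0)+2\leq 2\theta(0)+2$ (the $1$-rough contraction property of $\chi$ together with $\abs{T_v-t}\leq\theta(0)$ from (\ref{qparam-reg})$^q$) and the $(\lambda,k)$-quasi-geodesic property of $\gamma_v$; each of the remaining three comparisons is at most $\DA$ by Lemma~\ref{lem:univ-const}(\ref{lem:ray-same-param}), applied to $(\gamma_v,\gamma)$ meeting at $v$, to $(\gamma,\gamma_x)$ meeting at $x$, and to $(\gamma_x,\eta)$ meeting at $\varphi(x)=\gamma_x(\chi(T_x))$ respectively (here (\ref{qparam-reg})$^q$ also supplies $\abs{T_x-L_\gamma},\abs{L_\eta-\chi(T_x)}\leq\theta(0)$). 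Hence $\ds{\varphi(\gamma(t)),\eta(\rho(t))}<H$ with $H:=2\lambda(\theta(0)+1)+k+3\DA$, so $\varphi$ is $\rho$-$H$-$\xcalL{X}_a$-$\xcalL{X}_a$-equivariant.

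The step I expect to cost the most care is the bookkeeping in the previous paragraph: one must check that the parameters $\chi(T_v)$ and $\rho(t)$ at which the various quasi-geodesics are evaluated really lie in their finite domains $[0,T_v]$, $[0,L_\gamma]$, $[0,T_x]$, $[0,L_\eta]$, given only the $\theta(0)$-size discrepancies among $t,T_v,T_x,L_\gamma,L_\eta$ and the fact that $\rho^{-1}$ is defined only up to a supremum. Subtracting the constant $\theta(0)+1$ in passing from $\chi$ to $\rho$ is precisely what creates the slack needed to keep every such evaluation in range, without affecting the rough-contraction class or the qualitative conclusion. The remaining loose ends — points $v$ so close to $a$ that $\gamma_v$ is too short to lie in $\bar{\calL}_a$ (but then $\varphi(v)$ and $v$ both sit in a fixed bounded ball), the base-point normalisation, and the boundary value $t=\rho^{-1}(L_\eta)$ — only enlarge $H$ by bounded amounts. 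No geometric input beyond Lemma~\ref{lem:univ-const} is required.
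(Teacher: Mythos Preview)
Your proof is correct and follows the same underlying strategy as the paper: both arguments use Lemma~\ref{lem:univ-const}(\ref{lem:ray-same-param}) as the sole geometric input, walking along the chain $\gamma_v\to\gamma\to\gamma_x\to\eta$ at a common radial parameter, and both rely on (\ref{qparam-reg})$^q$ to control $|T_v-t|$ and $|T_x-L_\gamma|$ by $\theta(0)$.

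The one noteworthy difference is in the choice of reparametrisation. The paper uses $\chi$ itself as the rough contraction in Definition~\ref{def:LL-equiv}, which forces a case split according to whether $\chi(t)\leq T_x$ or $\chi(t)>T_x$: in the latter case $\chi(t)$ may leave the domain of $\gamma_x$, so the paper handles it by a direct distance estimate exploiting that all the relevant parameters are then within $\theta(0)$ of each other. Your choice $\rho(s)=(\chi(s)-\theta(0)-1)\vee 0$ buys exactly the slack needed so that $\rho(t)$ stays inside every domain in the chain, eliminating the case analysis at the price of a slightly smaller (but still rough-contraction) reparametrisation. This is a clean simplification and is consistent with the spirit of Definition~\ref{def:LL-equiv}, which only asks for \emph{some} rough contraction. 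For radiality, the paper invokes the weak form and Lemma~\ref{lem:weak-rad-rad}; your direct computation via the quasi-geodesic inequalities achieves the same end with no extra effort.
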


\begin{proof}
 Let $x\in X$.  We choose $\gamma_x\in \xcalL{X}$ and $T_x\in \Rp$ such that
 $x=\gamma_x(T_x)$, $\varphi(x)=\gamma_x(\chi(T_x))$.
 Then
 \begin{align*}
  \ds{\varphi(x),a}=\ds{\gamma_x(\chi(T_x)),\gamma_x(0)} 
  \geq \frac{1}{\lambda}\chi(T_x) -k.
 \end{align*}
 Set $\sigma(t):=(\lambda^{-1}\chi(t)-k)\vee 0$. Then
 $\varphi$ is weakly $\sigma$-$\xcalL{X}_a$-radial, so 
 it is $\xcalL{X}$-radial, by Lemma~\ref{lem:weak-rad-rad}.

 Let $\gamma\in \xcalL{X}$. Set $\Dom \gamma = [0,L_\gamma]$ and 
 $x:=\gamma(L_\gamma)$. 
 Let $\eta\in \xcalL{X}_a$ with $\Dom \eta = [0,L_\eta]$ such that 
 $\eta(L_\eta) = \varphi(x)$.

 We fix $t\in \Rp$ with $t\leq \min\{L_\gamma,\chi^{-1}(L_\eta)\}$,
 and set $w:=\gamma(t)$.
 We choose $\gamma_x, \gamma_w\in \xcalL{X}_a$ and $T_x,T_w\in \Rp$ such that
 $x=\gamma_x(T_x)$, $w=\gamma_w(T_w)$, $\varphi(x)=\gamma_x(\chi(T_x))$, 
 and $\varphi(w)=\gamma_w(\chi(T_w))$. 

 Since $\gamma(t)=\gamma_w(T_w)$,
 we have $\abs{T_w-t}\leq \theta(0)$.
 Set $u:=\min\{T_w,t\}$. 
 We remark
 \begin{align*}
  \abs{\chi(T_w)-\chi(u)}
  &\leq \abs{T_w-u}+1 \leq \theta(0) + 1,\\
  \abs{\chi(t)-\chi(u)}
  &\leq \abs{t-u}+1 \leq \theta(0) + 1.
 \end{align*}
 By Lemma~\ref{lem:univ-const}~(\ref{lem:ray-same-param}),
 \begin{align*}
  \ds{\gamma_w(\chi(u)),\gamma(\chi(u))}
  \leq \DA.
 \end{align*}
 Therefore we have
 \begin{align}
  \label{eq:13}
  \ds{\gamma_w(\chi(T_w)),\gamma(\chi(t))} 
  &\leq \ds{\gamma_w(\chi(u)),\gamma(\chi(u))} + \lambda(\theta(0) +1) +k\\
\notag
  &\leq \DA + \lambda\theta(0)+\lambda+ k.
 \end{align}

 Now, first we suppose $\chi(t)\leq T_x$. Then,
 since $\gamma(L_\gamma) =x= \gamma_x(T_x)$,
 by Lemma~\ref{lem:univ-const}~(\ref{lem:ray-same-param}),
 \begin{align*}
  \ds{\gamma(\chi(t)),\gamma_x(\chi(t))}
  \leq \DA.
 \end{align*}
 Since $\gamma_x(\chi(T_x)) = \eta(L_\eta)$ and 
 $\chi(t)\leq L_\eta$, by 
 Lemma~\ref{lem:univ-const}~(\ref{lem:ray-same-param}) we have 
 \begin{align*}
  \ds{\gamma_x(\chi(t)),\eta(\chi(t))}
  \leq \DA.
 \end{align*}
 Thus we have
 \begin{align}
  \label{eq:18}
  \ds{\gamma_w(\chi(T_w)),\eta(\chi(t))}\leq 3\DA  
  + \lambda\theta(0) +k+\lambda.
 \end{align}

 Next we suppose $\chi(t)> T_x$. Then, we have 
 \begin{align*}
  T_x< \chi(t)\leq t\leq L_\gamma \quad \text{and}
  \quad 
  \chi(T_x)\leq T_x< \chi(t)\leq L_\eta.
 \end{align*}
 Since  $\gamma_x(T_x)=\gamma(L_\gamma)$ and 
 $\gamma_x(\chi(T_x))=\eta(L_\eta)$, 
 \begin{align*}
  \abs{T_x-L_\gamma} \leq  \theta(0) \quad \text{and}
  \quad
  \abs{\chi(T_x)-L_\eta} \leq  \theta(0).
 \end{align*}
 Then we have
 \begin{align*}
  \ds{\gamma(\chi(t)),\eta(\chi(t))}&\leq 
  \ds{\gamma(\chi(t)),\gamma(L_\gamma)} 
  + \ds{\gamma_x(T_x),\gamma_x(\chi(T_x))} 
  + \ds{\eta(L_\eta),\eta(\chi(t))}\\
  &\leq \lambda(\abs{\chi(t)-L_\gamma} + \abs{T_x-\chi(T_x)} 
  + \abs{L_\eta-\chi(t)}) + 3k\\
  &\leq \lambda(\abs{T_x-L_\gamma}+2\abs{\chi(T_x)-L_\eta})+3k\\
  & \leq 3\lambda\theta(0)+3k.
 \end{align*}
 Thus by (\ref{eq:13}) and the above inequality, we have
 \begin{align}
  \label{eq:15}
  \ds{\gamma_w(\chi(T_w)),\eta(\chi(t))}\leq 
    \DA + 4(\lambda\theta(0) +k) + \lambda.
 \end{align}

 Finally by (\ref{eq:18}) and (\ref{eq:15}) we have
 \begin{align*}
  \ds{\varphi(\gamma(t)),\eta(\chi(t))}&=
  \ds{\varphi(w),\eta(\chi(t))} =  \ds{\gamma_w(\chi(T_w)),\eta(\chi(t))}\\
  &\leq 3\DA + 4(\lambda\theta(0) +k) +\lambda.
 \end{align*}
Hence the proof is done.
\end{proof}

The first half of Theorem~\ref{thm:rad-ext} follows from Proposition~\ref{prop:radp-LLequiv}
and Proposition~\ref{prop:vaphi-LLequiv}.

\begin{proposition}
\label{prop:chmtpr1r2}
 For $i=1,2$, let $r_i\colon \Rp\to \Rp$ be 0-rough contractions satisfying
 Remark~\ref{rem:r-change}.
 Let $\rad_{i} f$ be the radial extension of $f$ associated to $r_i$.
 Then $\rad_{1} f$ and $\rad_{2} f$ are coarsely homotopic.
\end{proposition}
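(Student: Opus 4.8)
The plan is to build an explicit coarse homotopy by sliding, over each point of $X$, along a single good quasi-geodesic ray. I would first fix, once and for all, the data in the construction of a radial extension that do not involve $r$: the deformation $\varphi\colon X\to\Xvis$ of Proposition~\ref{prop:deform}, the rays and parameters defining $\loge$, and the rays $\eta_y\in\xcalL{Y}_b^\infty$ $(y\in\partial_b Y)$ defining $\expe$. Writing $\varphi(x)=\gamma_x(t_x)$ with $\gamma_x\in\xcalL{X}_a^\infty$ and $\loge(\varphi(x))=t_x^\epsilon[\gamma_x]$, one computes directly that for $i=1,2$, $\rad_i f(x)=\eta_x\bigl(p_i(x)\bigr)$, where $\eta_x:=\eta_{f([\gamma_x])}$ and $p_i(x):=r_i(t_x^\epsilon)^{1/\epsilon}$. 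Thus $\rad_1 f(x)$ and $\rad_2 f(x)$ lie on the \emph{same} ray $\eta_x\in\xcalL{Y}_b^\infty$, only at different parameters.

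Next I would put $T_x:=\ds{\rad_1 f(x),\rad_2 f(x)}$, $Z:=\{(x,s):0\le s\le T_x\}\subset X\times\Rp$, and define $h\colon Z\to Y$ by $h(x,s):=\eta_x(\ell_x(s))$, where $\ell_x\colon[0,T_x]\to\Rp$ is the affine map with $\ell_x(0)=p_1(x)$ and $\ell_x(T_x)=p_2(x)$ (if $T_x=0$ the two images coincide and the fibre is a point). Then $h(x,0)=\rad_1 f(x)$, $h(x,T_x)=\rad_2 f(x)$, and $x\mapsto T_x$ is bornologous because $\rad_1 f$ and $\rad_2 f$ are coarse (for any two bornologous maps $g_1,g_2$, the function $x\mapsto\ds{g_1(x),g_2(x)}$ is bornologous). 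So it remains to prove that $h$ is a coarse map. Metric properness is routine: if $h(x,s)$ stays bounded then, since $\eta_x$ is a $(\lambda',k_1')$-quasi-geodesic ray from $b$, the parameter $\ell_x(s)$ is bounded, hence so is $\min\{r_1(t_x^\epsilon),r_2(t_x^\epsilon)\}$, hence $t_x$ is bounded (both $r_i$ tend to $\infty$), hence $\ds{\varphi(x),a}$ and then $\ds{x,a}$ are bounded by metric properness of $\varphi$; finally $s\le T_x$ is bounded since $\rad_1 f,\rad_2 f$ are bornologous.

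The main obstacle is showing that $h$ is bornologous. The subtlety is that $\expe\circ\calO f$ is \emph{not} bornologous, so $h$ cannot be estimated coordinatewise through the cone; instead I would treat the coarse maps $\rad_1 f,\rad_2 f$ as anchors and exploit the coarse convexity of $\xcalL{Y}$. For $\ds{x,x'}\le\delta$ one has $\ds{\eta_x(p_j(x)),\eta_{x'}(p_j(x'))}\le M=M(\delta)$ $(j=1,2)$ and $|T_x-T_{x'}|\le 2M$. Since $\eta_x(0)=\eta_{x'}(0)=b$, Proposition~\ref{prop:qgeod-ray-convex}(1) (with the scaling parameter $c\in[0,1]$) upgrades this to closeness at all proportional parameters, $\ds{\eta_x(cp_j(x)),\eta_{x'}(cp_j(x'))}\le E'M+D'$, and together with the parameter regularity Proposition~\ref{prop:qgeod-ray-convex}(2) it also forces $|p_1(x)/p_2(x)-p_1(x')/p_2(x')|$ to be small once $p_2(x')$ is large (Lemma~\ref{lem:t-ab} gives an equivalent comparison at equal parameters). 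I would then argue by cases on the size of $T_x$. If $T_x$ is bounded in terms of $\delta$, then $|p_1(x)-p_2(x)|\le\lambda'(T_x+k_1')$ is bounded by the quasi-geodesic inequality for $\eta_x$, so $h(x,s)$ lies a bounded distance from $\rad_1 f(x)$, and similarly over $x'$, and $\ds{\rad_1 f(x),\rad_1 f(x')}\le M$ closes the estimate. If $T_x$ is large, then the fractions $s/T_x$ and $s'/T_{x'}$ differ by at most $(2M+\delta)/T_x$, so $\ell_x(s)$ and $\ell_{x'}(s')$ are, up to a bounded additive error, the \emph{same} convex combination of $(p_1(x),p_2(x))$, respectively of $(p_1(x'),p_2(x'))$; writing this common combination as a proportion of an endpoint parameter and applying Proposition~\ref{prop:qgeod-ray-convex}(1) once more to $\eta_x,\eta_{x'}$ bounds $\ds{h(x,s),h(x',s')}$ by a constant depending only on $\delta$. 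A few degenerate sub-cases (one of $T_{x'}$, $p_1$, $p_2$ staying bounded, or the sign of $p_2(x)-p_1(x)$ flipping between $x$ and $x'$) reduce to the bounded-$T_x$ estimate. This proves $h$ coarse, so it is a coarse homotopy from $\rad_1 f$ to $\rad_2 f$.
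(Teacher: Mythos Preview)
Your proof is correct and reaches the same conclusion, but the route differs from the paper's in a few noteworthy ways.

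The paper first reduces to the ordered case $r_1\le r_2$ by passing through $r_3:=\min\{r_1,r_2\}$ and invoking transitivity of coarse homotopy; it then works on $\Xvis$ with $\radp_i f$ (before composing with $\varphi$) and uses the explicit homotopy $H(x,t)=\eta_x\bigl(r_1((T_x-t)^\epsilon)^{1/\epsilon}+r_2(t^\epsilon)^{1/\epsilon}\bigr)$, where $T_x$ is the $\loge$-parameter of $x$. The ordering $r_1\le r_2$ is precisely what lets the paper anchor the bornologousness estimate at a single known coarse map, $\radp_2 f$, via one application of Lemma~\ref{lem:t-ab}, with no case distinction.

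Your approach is more direct: no ordering reduction, affine interpolation along $\eta_x$, and $T_x=\ds{\rad_1f(x),\rad_2f(x)}$ as the homotopy length. This makes bornologousness of $x\mapsto T_x$ automatic, but shifts the work into the case analysis on the size of $T_x$ and the control of the ratios $p_1/p_2$. The argument is sound, though the detour through ratios and Proposition~\ref{prop:qgeod-ray-convex}(\ref{qrayconvex}) is heavier than necessary: once you have established that $|\ell_x(s)-\ell_{x'}(s')|$ is bounded (which your estimate $|s/T_x-s'/T_{x'}|\lesssim(2M+\delta)/T_x$ together with the quasi-geodesic comparability $|p_2-p_1|\asymp T_x$ already yields), a single application of Lemma~\ref{lem:t-ab} with anchor at $\rad_j f$ for the index $j$ realizing $p_j=\max\{p_1,p_2\}$ would replace the ratio bookkeeping. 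In short, the paper's reduction to $r_1\le r_2$ buys a clean one-shot estimate, while your version avoids that preliminary step at the cost of more case-checking.
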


\begin{proof}
 Let $\radconi_i$ be the radial contractions $\radconi_i\colon \OdaX\to \OdaX$ 
 associated to $r_i$ such that 
 $\rad_{i}=\expe\circ \calO f\circ \radconi_i \circ \loge\circ \varphi$.

 We can assume without loss of generality that
 \begin{align}
  \label{eq:17}
  r_1(t)\leq r_2(t) \quad (\forall t\in \Rp).
 \end{align}
 Indeed, set $r_3(t):=\min\{r_1(t),r_2(t)\}$ and let $\rad_3 f$ be the
 radial extension of $f$ associated to $r_3$.
 If we showed that $\rad_i f$ and $\rad_3 f$ are coarsely homotopic,
 then by the transitivity of coarse homotopy, $\rad_1 f$ and $\rad_2 f$
 are coarsely homotopic.

 Now set 
 $\radp_{i}:= \expe\circ \calO f\circ \radconi_i \circ \loge\colon \Xvis\to Y$.
 It is enough to show that $\radp_1 f$ and $\radp_{2} f$ 
 are coarsely homotopic.
 
 For each $x\in \Xvis$, we choose $\gamma_x\in \xcalL{X}_a^\infty$ and 
 $T_x\in\Rp$ 
 such that $\loge(x)=T_x^\epsilon [\gamma_x]$. 
 We also choose $\eta_x\in \xcalL{Y}_b^\infty$ such that 
 $f([\gamma_x]) = [\eta_x]$ and 
 $\expe(s[\eta_x])=\eta_x(s^{1/\epsilon})$ for all $s\in \Rp$.

 Set $Z:=\{(x,t)\in \Xvis \times\Rp\colon t\leq T_x\}$. We define a coarse homotopy
 $H\colon Z\to Y$ by
 \begin{align*}
  H(x,t)= \eta_x(r_1((T_x-t)^\epsilon)^{1/\epsilon} 
  + r_2(t^\epsilon)^{1/\epsilon}).
 \end{align*}
 Then for $x\in \Xvis$, $H(x,0) = \radp_1 f(x)$ and $H(x,T_x)=\radp_2 f(x)$.
 So all we need to show is that $H$ is a coarse map.
 It is easy to show that $H$ is metrically proper. We will show that
 $H$ is bornologous. 

 Let $(x,t),(y,t')\in Z$.
 For $y$, we choose $\gamma_y\in \xcalL{X}_a^\infty$, 
 $T_y\in\Rp$ and $\eta_y\in \xcalL{Y}_b^\infty$ as above.
 Set $\delta:=\sup\{u\in \Rp:r_1(u^\epsilon)\leq 1\}$.
 By Lemma~\ref{lem:r-eps-rough-contr}, for $i=1,2$, we have 
\begin{align}
 \label{eq:19}
  \abs{r_i(u^\epsilon)^{1/\epsilon} -r_i(u'^\epsilon)^{1/\epsilon}}\leq 
 \abs{u-u'}+2\delta, \quad (\forall u,u'\in \Rp).
\end{align}

 Now set $s:= r_1((T_x-t)^\epsilon)^{1/\epsilon}  + r_2(t^\epsilon)^{1/\epsilon}$ and 
$s':= r_1((T_y-t')^\epsilon)^{1/\epsilon}  + r_2(t'^\epsilon)^{1/\epsilon}$.
 By~(\ref{eq:19}), we have
 \begin{align*}
  \abs{s-s'}&\leq 
  \abs{r_1((T_x-t)^\epsilon)^{1/\epsilon} 
   - r_1((T_y-t')^\epsilon)^{1/\epsilon}}
   + \abs{r_2(t^\epsilon)^{1/\epsilon} - r_2(t'^\epsilon)^{1/\epsilon}}\\
  &\leq \abs{T_x - T_y} + 2\abs{t-t'} + 4\delta\\
  &\leq \tilde{\theta}(\ds{x,y}) + 2\abs{t-t'} + 4\delta.& 
 \end{align*}
 Set $q:=\min\{s,s'\}$. 
 Then by  Lemma~\ref{lem:t-ab} and (\ref{eq:17}), we have
 \begin{align*}
  \ds{H(x,t),H(y,t')}=&\ds{\eta_x(s),\eta_y(s')}\\
  =&\ds{\eta_x(q),\eta_y(q)} + \lambda\abs{s-s'}+k_1\\
  \leq &
  E'\left(\ds{\eta_x(r_2(T_x^\epsilon)^{1/\epsilon}),
  \eta_y(r_2(T_y^\epsilon)^{1/\epsilon})}
  +\lambda'\tilde{\theta}(\ds{\eta_x(r_2(T_x^\epsilon)^{1/\epsilon}),
  \eta_y(r_2(T_y^\epsilon)^{1/\epsilon}})\right) \\
& + D' + \lambda(\tilde{\theta}(\ds{x,y})  + 2\abs{t-t'} + 4\delta) +k'_1.\\
  =& E'\left(\ds{\radp_2 f(x),\radp_2 f(y)} 
  + \lambda'\tilde{\theta}(\ds{\radp_2 f(x),\radp_2 f(y)})\right) \\
  &+ D' + \lambda(\tilde{\theta}(\ds{x,y})  + 2\abs{t-t'} + 4\delta) +k'_1.
 \end{align*}
 Since $\radp_2 f$ is bornologous, $\ds{\radp_2 f(x),\radp_2 f(y)}$ is 
 bounded from the above by a constant depending only on $\ds{x,y}$.
 Therefore $H$ is bornologous. This complete the proof.
\end{proof}

\subsection{Induced map of the radial extension}
In this section, we give a proof of the statement (\ref{item:10})
of Theorem~\ref{thm:rad-ext}.
Let $f\colon \partial_a X\to \partial_b Y$ be 
a continuous map and let $\rad f$ be its radial extension.
We will show that the induced map $\partial \rad f$ is equal to $f$.

\begin{lemma}
 We have $\partial \radp f = f$.
\end{lemma}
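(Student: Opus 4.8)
The plan is to unwind the definition of $\radp f = \expe\circ\calO f\circ\radcon\circ\loge$ along a quasi-geodesic ray and show that the induced boundary map sends $[\gamma]$ to $f([\gamma])$. First I would fix $p\in\partial_a X$ and choose a representative $\gamma\in\xcalL{X}_a^\infty$ with $[\gamma]=p$. For $n\in\N$, set $x_n:=\gamma(n)\in\Xvis$. By construction of $\loge$ we may take $\gamma_{x_n}=\gamma$ and $T_{x_n}=n$, so $\loge(x_n)=n^\epsilon[\gamma]$. Following the chain of maps exactly as in the computation in the proof of Proposition~\ref{prop:radp-LLequiv}, and choosing $\eta\in\xcalL{Y}_b^\infty$ with $f([\gamma])=[\eta]$ and $\expe(s[\eta])=\eta(s^{1/\epsilon})$, we get
\begin{align*}
 \radp f(x_n) = \eta\bigl(r(n^\epsilon)^{1/\epsilon}\bigr).
\end{align*}
The key point is that $\radp f(x_n)$ lies on the ray $\Im\eta$ for every $n$.

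Next I would show $x_n\to p$ and $\radp f(x_n)\to f(p)=[\eta]$ in the boundary. By Lemma~\ref{lem:associate}, since $x_n=\gamma(n)$ has bounded distance to $\Im\gamma$, we have $(x_n\mid[\gamma])_a\to\infty$, i.e. $x_n\to p$. On the $Y$ side, $\radp f(x_n)=\eta(r(n^\epsilon)^{1/\epsilon})$ lies on $\Im\eta$, so again by Lemma~\ref{lem:associate} we get $(\radp f(x_n)\mid[\eta])_b\to\infty$, provided $\radp f(x_n)$ tends to infinity; this holds because $r$ is a rough contraction with $\lim_{t\to\infty}r(t)=\infty$, hence $r(n^\epsilon)^{1/\epsilon}\to\infty$ and $\ds{\radp f(x_n),b}\geq\frac1\lambda r(n^\epsilon)^{1/\epsilon}-k_1\to\infty$. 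Therefore $\radp f(x_n)\to[\eta]=f(p)$.

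Finally, since $\radp f$ is a coarse map (being a composite of coarse maps, by Proposition~\ref{prop:deform}, Proposition~\ref{prop:psedoconti}, and the properties of $\loge$), one checks it is visual: here I would invoke the first half of Theorem~\ref{thm:rad-ext}, already established via Proposition~\ref{prop:radp-LLequiv}, which gives that $\radp f$ is $\xcalL{X}$-radial and $\xcalL{X}$-$\xcalL{Y}$-equivariant, hence visual by Theorem~\ref{thm:visual-Lip} (strictly, one should note $\radp f$ is also large scale Lipschitz as a composite of coarse maps on a geodesic-type space, or argue visuality directly). Granting visuality, $\partial\radp f$ is defined, and by Corollary~\ref{cor:visu-bdl-cont} the extended map $\radp f\cup\partial\radp f$ is continuous at $p$; since $x_n\to p$ we get $\radp f(x_n)\to\partial\radp f(p)$. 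Comparing with $\radp f(x_n)\to f(p)$ and using that $\partial Y$ is Hausdorff, we conclude $\partial\radp f(p)=f(p)$. As $p\in\partial_a X$ was arbitrary, $\partial\radp f=f$.

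The main obstacle is the bookkeeping in the first step — verifying that the chosen representatives propagate correctly through $\loge$, $\radcon$, $\calO f$, $\expe$ so that $\radp f(x_n)$ genuinely lands on $\Im\eta$ with $[\eta]=f([\gamma])$ — together with the subtlety that $\partial\radp f$ is a priori defined using an \emph{arbitrary} representative via $\lim f(\gamma(n))$, so one must make sure the representative used to compute $\radp f$ is compatible; this is harmless because $\partial\radp f$ is independent of the representative by Proposition~\ref{prop:visual-extension}. The convergence arguments themselves are routine applications of Lemma~\ref{lem:associate} and the Gromov-product inequality in Lemma~\ref{lem:univ-const}~(\ref{lem:qrayultmvisu}).
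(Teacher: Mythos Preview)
There is a genuine gap at the step where you assert $\loge(x_n)=n^\epsilon[\gamma]$. The map $\loge$ is defined once and for all by choosing, for every point $v\in\Xvis$, a specific ray $\gamma_v$ and parameter $t_v$ with $\gamma_v(t_v)=v$; these choices are fixed before you pick your boundary point $p=[\gamma]$. For an arbitrary representative $\gamma\in p$ and $x_n=\gamma(n)$, the ray $\gamma_{x_n}$ selected by $\loge$ need not be $\gamma$, nor even represent $[\gamma]$. Consequently $\radp f(x_n)$ does not in general lie on $\Im\eta$; it lies on $\Im\eta_n$ for some $\eta_n$ with $[\eta_n]=f([\gamma_{x_n}])$, and nothing yet forces $[\gamma_{x_n}]$ close to $[\gamma]$ in $\partial_a X$. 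What you flag at the end as ``bookkeeping'' is exactly the missing substance: one must argue $[\gamma_{x_n}]\to[\gamma]$ and then invoke the \emph{continuity of $f$} to get $[\eta_n]\to[\eta]$.

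The paper's proof supplies precisely this. From $\gamma_{x_n}(T_n)=\gamma(n)$, Lemma~\ref{lem:prod-est-bigon} gives $(\gamma_{x_n}\mid\gamma)_a\to\infty$; then Lemma~\ref{lem:rate-conti} (the quantitative uniform continuity of $f$) yields $([\eta_n]\mid[\eta])_b\to\infty$. A second application of Lemma~\ref{lem:prod-est-bigon} gives $(\radp f(x_n)\mid[\eta_n])_b\to\infty$, and the quasi-ultrametric inequality of Lemma~\ref{lem:univ-const}~(\ref{lem:qrayultmvisu}) combines these into $(\radp f(x_n)\mid[\eta])_b\to\infty$. Your concluding step via Corollary~\ref{cor:visu-bdl-cont} is correct, but to reach it you need this two-step approximation through $\eta_n$ rather than a direct landing on $\eta$.
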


\begin{proof}
 Let $\gamma\in \xcalL{X}_a^\infty$.  We choose $\eta\in \xcalL{Y}_b^\infty$
 such that $[\eta] = f([\gamma])$ and 
 $\eta(s)=\expe(s[\eta])$ for all $s\in \Rp$.
 
 Set $x_n:=\gamma(n)$ for $n\in \N$.
 We choose $\gamma_n\in \xcalL{X}_a^\infty$ and $T_n\in \Rp$ 
 such that $\gamma_n(T_n)=x_n = \gamma(n)$ and $\loge(x_n)=T_n[\gamma_n]$.
 By Lemma~\ref{lem:prod-est-bigon}, 
 \begin{align*}
  (\gamma_n\mid \gamma)_a \geq 
  \frac{n-\tilde{\theta}(0)}{E(\tilde{\theta}(0)+k_1)}.
 \end{align*}

 We choose $\eta_n\in \xcalL{Y}_b^\infty$ such that 
 $[\eta_n] = f([\gamma_n])$ and $\expe(s[\eta_n])=\eta_n(s)$ for $s\in \Rp$.
 Then $\radp f(x_n)=\eta_n(r(T_n^{\epsilon})^{1/\epsilon})$.

 By Lemma~\ref{lem:rate-conti}, 
 \begin{align*}
  ([\eta_n]\mid [\eta])_b&=(f([\gamma_n])\mid f([\gamma]))_b\\
  &\geq \psi(([\gamma_n]\mid [\gamma])_a)\\
  &\geq \psi\left(
  \frac{n-\tilde{\theta}(0)}{E(\tilde{\theta}(0)+k_1)}\right)
  \rightarrow \infty.
 \end{align*}
 Let $\eta'_n\in \xcalL{Y}_b$ with $\Dom \eta'_n=[0,L_{\eta'_n}]$ such that
 $\eta'_n(L_{\eta'_n})=\radp f(x_n)$. 
 By Lemma~\ref{lem:prod-est-bigon},
 \begin{align*}
  (\radp f(x_n)\mid [\eta_n])_b\geq (\eta'_n\mid \eta_n)_b
  \geq 
  \frac{r(T_n^{\epsilon})^{1/\epsilon}-\tilde{\theta}(0)}
       {E(\lambda \tilde{\theta}(0)+k_1)} \to \infty.
 \end{align*}
 So we have
 \begin{align*}
  (\radp f(x_n)\mid [\eta])_b
  \geq \DA^{-1}\min\{(\radp f(x_n)\mid [\eta_n])_b, ([\eta_n]\mid [\eta])_b\}
  \to \infty.
 \end{align*}
 Thus $\radp f(x_n)\to [\eta]$.
 Since $\radp f \cup \partial \radp f$ is continuous at $\partial_a X$,
 we have $\partial \radp f([\gamma]) = [\eta] =  f([\gamma])$.
\end{proof}

\begin{lemma}
 We have  $\partial \varphi = \id_{\partial_a X}$.
\end{lemma}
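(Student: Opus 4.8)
The plan is to unwind the definition of the induced boundary map and check it directly on representatives. Since $\varphi$ is visual (by Proposition~\ref{prop:vaphi-LLequiv} together with Theorem~\ref{thm:visual-Lip}), the map $\partial\varphi$ is defined, and for $[\gamma]\in\partial_a X$ with $\gamma\in\xcalL{X}_a^\infty$ one has $\partial\varphi([\gamma])=\lim_n\varphi(\gamma(n))$, the limit taken in the metrizable space $X\cup\partial_a X$. So it is enough to show that $(\varphi(\gamma(n)))_n$ converges to $[\gamma]$, that is, $(\varphi(\gamma(n))\mid[\gamma])_a\to\infty$; by Lemma~\ref{lem:associate} this follows once we establish $\limsup_n\ds{\varphi(\gamma(n)),\Im \gamma}<\infty$.

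To prove this bound, fix $n$ and, following the construction of $\varphi$ in Proposition~\ref{prop:deform}, write $\beta_n:=\gamma_{\gamma(n)}\in\xcalL{X}_a$ and $s_n:=T_{\gamma(n)}\in\Rp$, so that $\beta_n(s_n)=\gamma(n)$ and $\varphi(\gamma(n))=\beta_n(\chi(s_n))$. Both $\beta_n$ and $\gamma$ start at $a$, and $\ds{\beta_n(s_n),\gamma(n)}=0$, so the parameter estimate in Proposition~\ref{prop:qgeod-ray-convex} gives $\abs{s_n-n}\leq\tilde{\theta}(0)$. Set $t_n:=\min\{\chi(s_n),n\}$. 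Since $\chi$ is a rough contraction, $\chi(s_n)\leq s_n$, hence $t_n\leq\min\{s_n,n\}$, and Lemma~\ref{lem:t-ab} applied to $\beta_n$ and $\gamma$ yields
\[
 \ds{\beta_n(t_n),\gamma(t_n)}\leq E(\lambda\tilde{\theta}(0)+k_1)+D.
\]
Moreover $\abs{\chi(s_n)-t_n}=\max\{0,\chi(s_n)-n\}\leq\max\{0,s_n-n\}\leq\tilde{\theta}(0)$, and $\beta_n$ is a $(\lambda,k)$-quasi-geodesic, so $\ds{\beta_n(\chi(s_n)),\beta_n(t_n)}\leq\lambda\tilde{\theta}(0)+k$. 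Adding the two estimates bounds $\ds{\varphi(\gamma(n)),\gamma(t_n)}$ by a constant independent of $n$, and $\gamma(t_n)\in\Im \gamma$; this is precisely the desired bound.

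The argument is in essence bookkeeping with the comparison lemmas of Section~\ref{sec:coars-conv-space}; the only point needing care is that $\chi(s_n)$ may exceed $n$, which is why the auxiliary parameter $t_n=\min\{\chi(s_n),n\}$ is introduced, so that Lemma~\ref{lem:t-ab} applies and the overshoot is absorbed into the quasi-geodesic constants. Once $\varphi(\gamma(n))\to[\gamma]$ is known, uniqueness of limits in $X\cup\partial_a X$ forces $\partial\varphi([\gamma])=[\gamma]$, and since $[\gamma]$ was arbitrary we conclude $\partial\varphi=\id_{\partial_a X}$.
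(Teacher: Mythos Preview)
Your proof is correct and reaches the same conclusion, but via a slightly different route from the paper. The paper's argument works entirely in terms of the product $(\cdot\mid\cdot)_a$: it introduces an auxiliary good quasi-geodesic $\eta_n\in\xcalL{X}_a$ from $a$ to $\varphi(x_n)$, applies Lemma~\ref{lem:prod-est-bigon} twice to obtain $(\gamma_n\mid\gamma)_a\to\infty$ and $(\eta_n\mid\gamma_n)_a\to\infty$, and then uses the quasi-ultrametric inequality (Lemma~\ref{lem:univ-const}(\ref{lem:qrayultmvisu})) to conclude $(\varphi(x_n)\mid[\gamma])_a\to\infty$. You instead bound the metric distance $\ds{\varphi(\gamma(n)),\Im\gamma}$ directly via Lemma~\ref{lem:t-ab} and then invoke Lemma~\ref{lem:associate}. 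Your approach is a bit more geometric and avoids the auxiliary $\eta_n$ and the product-chaining step; the paper's approach stays within the Gromov-product formalism used throughout Section~\ref{sec:radial-extension}. Either method is short, and the bookkeeping with $t_n=\min\{\chi(s_n),n\}$ is handled cleanly.
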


\begin{proof}
 Let $\gamma\in \xcalL{X}_a^\infty$. 
 Set $x_n:=\gamma(n)$ for $n\in \N$.
 We choose $\gamma_n\in \xcalL{X}_a^\infty$ and $T_n\in \Rp$ 
 such that $\gamma_n(T_n)=x_n = \gamma(n)$ and 
 $\varphi(x_n)=\gamma_n(\chi(T_n))$.
 Since $\gamma_n(T_n)= \gamma(n)$, by Lemma~\ref{lem:prod-est-bigon}, 
 \begin{align*}
 (\gamma_n\mid \gamma)_a \geq 
  \frac{n-\tilde{\theta}(0)}{E(\tilde{\theta}(0)+k_1)}\to \infty.
 \end{align*}
 Let $\eta_n\in \xcalL{X}_a$ with $\Dom \eta_n=[0,S_n]$ such that 
 $\eta_n(S_n) = \varphi(x_n)=\gamma_n(\chi(T_n))$.
 Then, by Lemma~\ref{lem:prod-est-bigon}, 
 \begin{align*}
  (\eta_n\mid \gamma_n)_a
  \geq \frac{\chi(T_n)-\tilde{\theta}(0)}{E(\tilde{\theta}(0)+k_1)}
  \to \infty.
 \end{align*}
 So we have
 \begin{align*}
  (\varphi(x_n)\mid [\gamma])_a\geq (\eta_n\mid \gamma)\geq 
  \DA^{-1}\min\{(\eta_n\mid \gamma_n)_a, (\gamma_n\mid \gamma)_a\}
  \to \infty.
 \end{align*}
 Thus $\varphi(x_n)\to [\gamma]$. 
 Since $\varphi\cup \partial \varphi$ is continuous at $\partial_a X$,
 we have $\partial \varphi([\gamma]) =  [\gamma]$.
\end{proof}

\begin{corollary}
\label{cor:pradf=f}
 $\partial \rad f = f$. Especially, $\rad f$ is visually $n$-to-one if and only if
 $f$ is $n$-to-one.
\end{corollary}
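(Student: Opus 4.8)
The plan is to read off the identity $\partial\rad f = f$ from the two preceding lemmas, which compute $\partial\radp f = f$ and $\partial\varphi = \id_{\partial_a X}$, together with the factorization $\rad f = \radp f\circ\varphi$. This factorization is legitimate because $\varphi(X)\subset\Xvis=\Dom\radp f$ by Proposition~\ref{prop:deform}. Since $\radp f$ is defined only on the subspace $\Xvis$, rather than invoking an abstract functoriality of the assignment $F\mapsto\partial F$ I would thread the relevant continuity statements through a concrete approximating sequence.

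Concretely, fix $x\in\partial_a X$, choose $\gamma\in\xcalL{X}_a^\infty$ with $[\gamma]=x$, and set $x_n:=\gamma(n)$. By definition of the induced map, $\partial\rad f(x)=\lim_n\rad f(x_n)=\lim_n\radp f(\varphi(x_n))$. The proof of the lemma $\partial\varphi=\id_{\partial_a X}$ shows more than its statement: it shows $(\varphi(x_n)\mid[\gamma])_a\to\infty$, i.e. $\varphi(x_n)\to x$ in $X\cup\partial_a X$. Since $\radp f$ is visual, the map $\radp f\cup\partial\radp f$ is continuous at every point of $\partial_a X$ (Corollary~\ref{cor:visu-bdl-cont}, as was already used in the proof of $\partial\radp f=f$), so $\radp f(\varphi(x_n))\to\partial\radp f(x)$. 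Finally $\partial\radp f(x)=f(x)$ by the lemma $\partial\radp f=f$. Hence $\partial\rad f(x)=f(x)$; as $x$ was arbitrary, $\partial\rad f=f$.

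For the last sentence of the corollary, recall that $\rad f$ is visual (the first half of Theorem~\ref{thm:rad-ext} together with Theorem~\ref{thm:visual-Lip}), so Proposition~\ref{prop:bdry-n-to-one} applies to it: $\rad f$ is visually $n$-to-one if and only if $\partial\rad f$ is $n$-to-one. Since $\partial\rad f=f$, this happens exactly when $f$ is $n$-to-one, which finishes the proof.

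I do not expect a genuine obstacle here: once the two component lemmas are in place, the corollary is essentially bookkeeping. The only point that repays attention is the mild one noted above, namely that $\radp f$ lives on $\Xvis$ rather than on all of $X$, so the chain $\partial\rad f=\partial\radp f\circ\partial\varphi$ must be justified via the sequential continuity argument rather than quoted as a formal identity; and one should make sure the chosen $r$ really does force $\rad f$ to be visual, so that Proposition~\ref{prop:bdry-n-to-one} is applicable.
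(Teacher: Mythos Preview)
Your proposal is correct and follows the paper's approach: the paper's proof is a single line, ``The second half of the statement follows from Proposition~\ref{prop:bdry-n-to-one},'' leaving the identity $\partial\rad f=f$ implicit from the two preceding lemmas and the factorization $\rad f=\radp f\circ\varphi$. You simply make explicit the sequential argument needed to pass the boundary operator through this composite (a care the paper does not spell out), and you verify that $\rad f$ is visual so that Proposition~\ref{prop:bdry-n-to-one} applies; both are exactly what the paper is tacitly using.
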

\begin{proof}
 The second half of the statement follows from Proposition~\ref{prop:bdry-n-to-one}.
\end{proof}


\subsection{Radial extension of the induced map}
\label{sec:raddelF}
In this section, we give a proof of the statement (\ref{item:11})
of Theorem~\ref{thm:rad-ext}.
Let $F\colon X\to Y$ be a visual
$\rho$-$T$-$\xcalL{X}_a^\infty$-$\xcalL{Y}_b^\infty$-equivariant map.
We will show the that the radial extension $\rad \partial F$ of the 
induced map $\partial F\colon \partial X\to \partial Y$ is coarsely homotopic 
to $F$.

 First, we will show that 
 $\radp \partial F\colon \Xvis\to Y$ is coarsely homotopic 
to the restriction $F|_{\Xvis}$.
 For each $x\in \Xvis$, we choose $\gamma_x\in \xcalL{X}_a^\infty$ and 
 $T_x\in\Rp$ 
 such that $\loge(x)=T_x^\epsilon [\gamma_x]$. 
 We also choose $\eta_x\in \xcalL{Y}_b^\infty$ such that 
 $\partial F([\gamma_x]) = [\eta_x]$ and 
 $\expe(s[\eta_x])=\eta_x(s^{1/\epsilon})$.

 Set $\Zvis:=\{(x,t)\in \Xvis \times\Rp\colon t\leq T_x\}$. 
 We define a coarse homotopy
 $H\colon \Zvis\to Y$ by
 \begin{align*}
  H(x,t)= \eta_x(\rho(T_x-t) + r(t^\epsilon)^{1/\epsilon}).
 \end{align*}

By Proposition~\ref{prop:chmtpr1r2}, we can assume without loss of
generality that $r(t^\epsilon)^{1/\epsilon}\leq \rho(t)$ for all 
$t\in \Rp$.

\begin{lemma}
\label{lem:H-coarse-map}
 The map $H$ is a coarse map.
\end{lemma}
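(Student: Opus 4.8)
The plan is to verify the two defining properties of a coarse map for $H$ — metric properness and bornologousness — following the pattern of the proof of Proposition~\ref{prop:chmtpr1r2}. Throughout write $s(x,t):=\rho(T_x-t)+r(t^\epsilon)^{1/\epsilon}$, so that $H(x,t)=\eta_x(s(x,t))$, where $\eta_x\in\xcalL{Y}_b^\infty$ is a $(\lambda',k_1')$-quasi-geodesic ray based at $b$ with $\partial F([\gamma_x])=[\eta_x]$. Metric properness is immediate: since $\eta_x$ is a $(\lambda',k_1')$-quasi-geodesic, $\ds{H(x,t),b}\ge(\lambda')^{-1}s(x,t)-k_1'$, and both $w\mapsto\rho(w)$ and $w\mapsto r(w^\epsilon)^{1/\epsilon}$ are non-decreasing with limit $\infty$ (because $r$ is a rough contraction). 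Hence if $\{\ds{H(x_n,t_n),b}\}_n$ is bounded, so is $\{s(x_n,t_n)\}_n$; since its two summands are non-negative, $\{\rho(T_{x_n}-t_n)\}_n$ and $\{r(t_n^\epsilon)^{1/\epsilon}\}_n$ are bounded, forcing $\{t_n\}_n$, $\{T_{x_n}-t_n\}_n$, hence $\{T_{x_n}\}_n$, and finally $\ds{x_n,a}\le\lambda T_{x_n}+k_1$, to be bounded; so $\{(x_n,t_n)\}_n$ is bounded in $\Zvis$.

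For bornologousness, fix $(x,t),(y,t')\in\Zvis$ and estimate $\ds{H(x,t),H(y,t')}$. First I would show that the scalar parameters are close: since $\gamma_x(0)=\gamma_y(0)=a$, $\gamma_x(T_x)=x$ and $\gamma_y(T_y)=y$, Proposition~\ref{prop:qgeod-ray-convex}\,(\ref{qrayparam-reg}) gives $|T_x-T_y|\le\tilde{\theta}(\ds{x,y})$; combining the $\tau$-rough-contraction inequality for $\rho$ with Lemma~\ref{lem:r-eps-rough-contr} for $t\mapsto r(t^\epsilon)^{1/\epsilon}$ (with $\delta:=\sup\{u:r(u^\epsilon)\le1\}$) yields
\[
|s(x,t)-s(y,t')|\le\tilde{\theta}(\ds{x,y})+2|t-t'|+\tau+2\delta ,
\]
a quantity depending only on $\ds{x,y}$ and $|t-t'|$. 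Next, setting $q:=\min\{s(x,t),s(y,t')\}$ and using that each $\eta_x,\eta_y$ is a $(\lambda',k_1')$-quasi-geodesic,
\[
\ds{H(x,t),H(y,t')}\le\ds{\eta_x(q),\eta_y(q)}+\lambda'|s(x,t)-s(y,t')|+k_1' .
\]

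The remaining, decisive step is to bound $\ds{\eta_x(q),\eta_y(q)}$ by coarse convexity. The map $\radp\partial F=\expe\circ\calO\partial F\circ\radcon\circ\loge$ is a composite of coarse maps, hence bornologous, and $\radp\partial F(x)=\eta_x(r(T_x^\epsilon)^{1/\epsilon})$, so $\ds{\radp\partial F(x),\radp\partial F(y)}$ is controlled by $\ds{x,y}$. Arguing as in the proof of Proposition~\ref{prop:chmtpr1r2} — using the normalisation $r(w^\epsilon)^{1/\epsilon}\le\rho(w)$ together with the convexity of $u\mapsto u^{1/\epsilon}$ — one has $q\le\min\{r(T_x^\epsilon)^{1/\epsilon},r(T_y^\epsilon)^{1/\epsilon}\}$, so Lemma~\ref{lem:t-ab} applied to the rays $\eta_x,\eta_y$ (both based at $b$) gives
\[
\ds{\eta_x(q),\eta_y(q)}\le E'\bigl(\ds{\radp\partial F(x),\radp\partial F(y)}+\lambda'\tilde{\theta}'(\ds{\radp\partial F(x),\radp\partial F(y)})+k_1'\bigr)+D' .
\]
Feeding this into the previous display and invoking the bornologousness of $\radp\partial F$ bounds $\ds{H(x,t),H(y,t')}$ in terms of $\ds{x,y}$ and $|t-t'|$, so $H$ is bornologous. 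The hard part is precisely this last estimate: the rays $\eta_x,\eta_y$ have no a priori relation beyond sharing the basepoint $b$, so the comparison $\ds{\eta_x(q),\eta_y(q)}$ must be routed through coarse convexity (Lemma~\ref{lem:t-ab}) anchored at the already-controlled values $\radp\partial F(x),\radp\partial F(y)$, and this forces one to check that the homotopy parameter $q$ never exceeds the parameters at which $\eta_x,\eta_y$ carry those values — which is where the normalisations on $r$ from Remark~\ref{rem:r-change} and the inequality $r(\cdot^\epsilon)^{1/\epsilon}\le\rho$ are used.
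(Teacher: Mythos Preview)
Your decisive step contains a genuine error: the inequality $q\le\min\{r(T_x^\epsilon)^{1/\epsilon},r(T_y^\epsilon)^{1/\epsilon}\}$ is false. Take $t=t'=0$: then $s(x,0)=\rho(T_x)+r(0)^{1/\epsilon}=\rho(T_x)$ and likewise $s(y,0)=\rho(T_y)$, so $q=\min\{\rho(T_x),\rho(T_y)\}$. But the normalisation you invoke points the other way, $r(w^\epsilon)^{1/\epsilon}\le\rho(w)$, hence in general $q\ge\min\{r(T_x^\epsilon)^{1/\epsilon},r(T_y^\epsilon)^{1/\epsilon}\}$, with strict inequality whenever $\rho$ is strictly larger than $r(\cdot^\epsilon)^{1/\epsilon}$. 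Convexity of $u\mapsto u^{1/\epsilon}$ does not help here: the homotopy parameter $s(x,t)$ genuinely ranges from $r(T_x^\epsilon)^{1/\epsilon}$ (at $t=T_x$) up to $\rho(T_x)$ (at $t=0$), and this upper end can be arbitrarily larger than the lower. So the anchor you propose for Lemma~\ref{lem:t-ab} is too small, and the comparison via $\radp\partial F$ cannot control $\ds{\eta_x(q),\eta_y(q)}$ over the full range of the homotopy. This is not the situation of Proposition~\ref{prop:chmtpr1r2}: there both summands are of the form $r_i(\cdot^\epsilon)^{1/\epsilon}$ and the anchor is taken at $r_2$, the larger of the two; here one summand involves $\rho$, which dominates $r(\cdot^\epsilon)^{1/\epsilon}$.

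What you are missing is precisely the hypothesis you never use: the $\rho$-$T$-$\xcalL{X}_a^\infty$-$\xcalL{Y}_b^\infty$-equivariance of $F$. The paper anchors at $\rho(T_x),\rho(T_y)$ instead, and to control $\ds{\eta_x(\rho(T_x)),\eta_y(\rho(T_y))}$ it invokes equivariance to produce rays $\xi_x,\xi_y\in\xcalL{Y}_b^\infty$ with $\ds{\xi_x(\rho(u)),F(\gamma_x(u))}<T$ for all $u$; Lemma~\ref{lem:LLoo-f} then gives $[\xi_x]=\partial F([\gamma_x])=[\eta_x]$, so by Lemma~\ref{lem:HdistD} one has $\ds{\eta_x(\rho(u)),\xi_x(\rho(u))}\le D$ for all $u$. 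This yields $\ds{\eta_x(\rho(T_x)),\eta_y(\rho(T_y))}\le\ds{F(\gamma_x(\rho(T_x))),F(\gamma_y(\rho(T_y)))}+2(T+D)$, and the right-hand side is controlled because $F$ is large scale Lipschitz and $\ds{\gamma_x(\rho(T_x)),\gamma_y(\rho(T_y))}$ is bounded in terms of $\ds{x,y}$ by a further application of Lemma~\ref{lem:t-ab}. The bornologousness of $\radp\partial F$ alone only gives control of $\eta_x$ at the single parameter $r(T_x^\epsilon)^{1/\epsilon}$, which is insufficient; the equivariance gives control along the entire ray.
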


\begin{proof}
  It is easy to show that $H$ is metrically proper, so we will show that $H$
 is bornologous.

 Let $(x,t),(y,t')\in \Zvis$. For $x$, $y$, 
 we choose $\gamma_x,\gamma_y\in \xcalL{X}_a^\infty$,
 $T_x,T_y\in \Rp$, $\eta_x,\eta_y\in \xcalL{Y}_b^\infty$ such that
 \begin{align*}
  x= \gamma_x(T_x),&\quad
  \loge(x)=T_x^\epsilon [\gamma_x], \quad \partial F([\gamma_x]) = [\eta_x], 
  \quad
 \expe(s[\eta_x])=\eta_x(s^{1/\epsilon}) \quad (\forall s),\\
  y= \gamma_(T_y),&\quad
  \loge(y)=T_y^\epsilon [\gamma_y], \quad  \partial F([\gamma_y]) = [\eta_y], 
  \quad
 \expe(s[\eta_y])=\eta_y(s^{1/\epsilon}) \quad (\forall s).
 \end{align*}
 
 Set $\delta:=\sup\{u\in \Rp:r(u^\epsilon)\leq 1\}$. 
 By Lemma~\ref{lem:r-eps-rough-contr}, we have
 \begin{align*}
  \abs{r(t^\epsilon)^{1/\epsilon} - r(t'^\epsilon)^{1/\epsilon}}
  \leq \abs{t-t'} + 2\delta.
 \end{align*}

 Let $\tau$ be a constant such that $\rho$ is a $\tau$-rough contraction.
 Then
 \begin{align*}
  \abs{\rho(T_x-t)-\rho(T_y-t')}
  \leq \abs{T_x-T_y}+\abs{t-t'}+\tau
  \leq \abs{t-t'}+ \tilde{\theta}(\ds{x,y}) + \tau.
 \end{align*}
 Set $s:=\rho(T_x-t)+r(t^\epsilon)^{1/\epsilon}$ and 
 $s':=\rho(T_y-t')+r(t'^\epsilon)^{1/\epsilon}$.
Then we have
 \begin{align*}
  \abs{s-s'} &\leq \abs{\rho(T_x-t) - \rho(T_y-t')}  
    + \abs{r(t^\epsilon)^{1/\epsilon} - r(t'^\epsilon)^{1/\epsilon}}\\
  &\leq \tilde{\theta}(\ds{x,y}) 
  + 2\abs{t-t'} + 2\delta + \tau.
 \end{align*}
 Now set $q:=\min\{s,s'\}$.
 Then by  Lemma~\ref{lem:t-ab}, we have
 \begin{align}
  \label{eq:9}
  \ds{H(x,t),H(y,t')}=&\ds{\eta_x(s),\eta_y(s')}\\
  \notag
  =&\ds{\eta_x(q),\eta_y(q)} + \lambda\abs{s-s'}+k_1\\
  \notag
  \leq &
  E'\left(\ds{\eta_x(\rho(T_x)),\eta_y(\rho(T_y))}
  +\lambda'\tilde{\theta}(\ds{\eta_x(\rho(T_x)),\eta_y(\rho(T_y))}\right) \\
  \notag
& + D' + \lambda(\tilde{\theta}(\ds{x,y})  + 2\abs{t-t'} + 2\delta + \tau) +k'_1.
 \end{align}

 Since $F$ is $\rho$-$T$-$\xcalL{X}_a^\infty$-$\xcalL{Y}_b^\infty$-equivariant,
 there exists $\xi_x,\xi_y\in \xcalL{Y}_b^\infty$ such that 
 \begin{align}
  \label{eq:xi-Fgm}
  \sup_{u\in\Rp }  \max\left\{  \ds{\xi_x(\rho(u)), F(\gamma_x(u))},
  \ds{\xi_y(\rho(u)),F(\gamma_y(u))}\right\}<T.
 \end{align}
 By Lemma~\ref{lem:LLoo-f}, $[\eta_x] = \partial F ([\gamma_x]) = [\xi_x]$ and 
 $[\eta_y] = \partial F ([\gamma_y]) = [\xi_y]$. So by Lemma~\ref{lem:HdistD},
 \begin{align}
  \label{eq:eta-xi}
  \sup_{u\in\Rp }  \max\left\{  \ds{\eta_x(\rho(u)),\xi_x(\rho(u)))},
  \ds{\eta_y(\rho(u)),\xi_y(\rho(u))}\right\}\leq D.
 \end{align}  
 Then we have
 \begin{align}
  \label{eq:11}
  \ds{\eta_x(\rho(T_x)),\eta_y(\rho(T_y))} \leq & 
  \ds{\eta_x(\rho(T_x)),F(\gamma_x(\rho(T_x)))} +
  \ds{F(\gamma_x(\rho(T_x))),F(\gamma_y(\rho(T_y)))} \\
  \notag &+
  \ds{F(\gamma_y(\rho(T_y))), \eta_y(\rho(T_y))} \\
  \notag
  \leq&   \ds{F(\gamma_x(\rho(T_x))),F(\gamma_y(\rho(T_y)))} + 2(T+D).
 \end{align}
 Set $T:=\min\{T_x,T_y\}$. Then by Lemma~\ref{lem:t-ab}
 \begin{align}
\label{eq:16}
  \ds{\gamma_x(\rho(T_x)),\gamma_y(\rho(T_y))} 
\leq &
  \ds{\gamma_x(\rho(T)),\gamma_y(\rho(T))} 
  + \lambda\abs{\rho(T_x)-\rho(T_y)} + k_1\\
\notag
\leq &  E(\ds{x,y} + \lambda\tilde{\theta}(\ds{x,y})+k_1)+D
  + \lambda(\tilde{\theta}(\ds{x,y})+\tau) + k_1.
 \end{align}

 The estimates ~(\ref{eq:9}),~(\ref{eq:11}),~(\ref{eq:16})
 and the assumption that $F$ is a large scale Lipschitz map imply
 $H$ is bornologous.
\end{proof}

\begin{lemma}
\label{lem:close}
 The map $\radp \partial F$ is coarsely homotopic to 
 the restriction $F|_{\Xvis}$.
\end{lemma}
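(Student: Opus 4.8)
The plan is to take the map $H\colon\Zvis\to Y$ supplied by Lemma~\ref{lem:H-coarse-map} and turn it into a coarse homotopy from $F|_{\Xvis}$ to $\radp\partial F$ after a small adjustment at the initial end. First I would record that $H$ is a coarse map (Lemma~\ref{lem:H-coarse-map}) and that the height function $\Xvis\ni x\mapsto T_x\in\Rp$ is bornologous: since $\gamma_x(0)=\gamma_y(0)=a$ and $\gamma_x(T_x)=x$, $\gamma_y(T_y)=y$, Proposition~\ref{prop:qgeod-ray-convex}~(\ref{qrayparam-reg}) gives $\abs{T_x-T_y}\leq\tilde{\theta}(\ds{x,y})$. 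Next I would compute the two endpoints of $H$. Since $r$ and $\rho$ are rough contractions, $r(0)=0$ and $\rho(0)=0$, hence $H(x,0)=\eta_x(\rho(T_x))$ and $H(x,T_x)=\eta_x(r(T_x^\epsilon)^{1/\epsilon})$; and tracing $x$ through $\loge$, $\radcon$, $\calO\partial F$ and $\expe$ exactly as in the computation of $\radp f$ in the proof of Proposition~\ref{prop:radp-LLequiv}, using $\partial F([\gamma_x])=[\eta_x]$ and $\expe(s[\eta_x])=\eta_x(s^{1/\epsilon})$, identifies $\eta_x(r(T_x^\epsilon)^{1/\epsilon})$ with $\radp\partial F(x)$. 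Thus $H$ is a coarse homotopy from the map $x\mapsto\eta_x(\rho(T_x))$ to $\radp\partial F$.

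The point that needs a little argument is that the initial map $x\mapsto H(x,0)=\eta_x(\rho(T_x))$ is within bounded distance of $F|_{\Xvis}$. Here I would use the equivariance hypothesis: since $F$ is $\rho$-$T$-$\xcalL{X}_a^\infty$-$\xcalL{Y}_b^\infty$-equivariant, Definition~\ref{def:LLoo-radial} supplies $\xi_x\in\xcalL{Y}_b^\infty$ with $\ds{F(\gamma_x(t)),\xi_x(\rho(t))}<T$ for all $t\in\Rp$; by Lemma~\ref{lem:LLoo-f} this forces $[\xi_x]=\partial F([\gamma_x])=[\eta_x]$, so Lemma~\ref{lem:HdistD} gives $\ds{\xi_x(s),\eta_x(s)}\leq D$ for all $s$. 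Taking $t=T_x$, so that $\gamma_x(T_x)=x$, yields
\[
 \ds{F(x),H(x,0)}\leq\ds{F(x),\xi_x(\rho(T_x))}+\ds{\xi_x(\rho(T_x)),\eta_x(\rho(T_x))}\leq T+D .
\]

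Finally I would splice a constant segment of length $1$ onto $H$ so as to land exactly on $F|_{\Xvis}$ at the start. On $\tilde Z:=\{(x,t)\in\Xvis\times\Rp:0\leq t\leq T_x+1\}$ define $\tilde H(x,t):=F(x)$ for $t\in[0,1]$ and $\tilde H(x,t):=H(x,t-1)$ for $t>1$. Then $x\mapsto T_x+1$ is bornologous, $\tilde H(x,0)=F(x)$ and $\tilde H(x,T_x+1)=H(x,T_x)=\radp\partial F(x)$, so it remains to check that $\tilde H$ is a coarse map. Metric properness is immediate. For bornologousness the only interesting case is $t\leq1<t'$, where one inserts $F(y)$ and $H(y,0)$ and combines the bornologousness of $F$ and of $H$ with the bound $\ds{F(y),H(y,0)}\leq T+D$ from the previous step, noting that $\ds{x,y}\leq d_{\tilde Z}((x,t),(y,t'))$ and $\abs{(t'-1)-0}\leq\abs{t-t'}$. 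I do not expect a genuine obstacle here: the heavy lifting — that $H$ itself is a coarse map — was already carried out in Lemma~\ref{lem:H-coarse-map}, and what remains is the bookkeeping at the two ends of the homotopy and at the seam $t=1$.
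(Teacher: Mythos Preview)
Your proposal is correct and follows essentially the same route as the paper: use the coarse map $H$ from Lemma~\ref{lem:H-coarse-map}, identify $H(x,T_x)=\radp\partial F(x)$, and verify $\ds{F(x),H(x,0)}\leq T+D$ via the equivariance of $F$ together with Lemmas~\ref{lem:LLoo-f} and~\ref{lem:HdistD}. The paper's proof simply stops after establishing that $H(-,0)$ is close to $F|_{\Xvis}$, invoking the standard fact that close maps are coarsely homotopic; you instead carry this out explicitly by splicing on a constant segment and checking bornologousness at the seam, and you also make explicit the bornologousness of $x\mapsto T_x$ (which the paper leaves unsaid). So your argument is a more detailed version of the paper's, not a different one.
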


\begin{proof}
 By Lemma~\ref{lem:H-coarse-map}, the map $H$ is a coarse map.
 For $(x,T_x)\in \Zvis$, we have 
 $H(x,T_x) = \eta_x(r(T_x^\epsilon)^{1/\epsilon}) = \radp \partial F$.
 It is enough to show that $H(-,0)$ is close to $F|_{\Xvis}$, that is, 
 $\sup_{x\in \Xvis}\ds{H(x,0),F(x)} <\infty$.

 Let $x\in \Xvis$. Then $H(x,0)=\eta_x(\rho(T_x))$. 
 Then by~(\ref{eq:xi-Fgm}) and~(\ref{eq:eta-xi}) in the proof of 
 Lemma~\ref{lem:H-coarse-map}, we have 
 $\ds{\eta_x(\rho(T_x)),F(\gamma_x(T_x))}\leq T+D$. Therefore 
 $H(-,0)$ is close to $F|_{\Xvis}$.
\end{proof}

\begin{proposition}
\label{prop:radf-chmtp}
 The map $\rad \partial F$ is coarsely homotopic to $F$.
\end{proposition}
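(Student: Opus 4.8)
The plan is to reduce the statement to Lemma~\ref{lem:close} and Proposition~\ref{prop:deform} by exploiting the functoriality of coarse homotopy. The starting observation is that, directly from the definition of the radial extension $\rad \partial F = \expe\circ \calO(\partial F)\circ \radcon\circ \loge\circ \varphi$ and of $\radp \partial F = \expe\circ \calO(\partial F)\circ \radcon\circ \loge$, one has the identity
\begin{align*}
 \rad \partial F = \radp \partial F \circ \varphi,
\end{align*}
where $\varphi\colon X\to X$ is the coarse map of Proposition~\ref{prop:deform}; recall that $\varphi(X)\subset \Xvis$ and that $\varphi$ is coarsely homotopic to $\id_X$. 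I would then chain two coarse homotopies, one from $\rad \partial F$ to $F\circ\varphi$ and one from $F\circ\varphi$ to $F\circ\id_X = F$, and conclude by the transitivity of coarse homotopy, which is already invoked in Proposition~\ref{prop:chmtpr1r2}.

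For the first link I would note that coarse homotopy is stable under precomposition with a coarse map. Given a coarse homotopy $H\colon Z\to Y$ between coarse maps $g_0,g_1\colon \Xvis\to Y$, with $Z=\{(x,t)\in \Xvis\times \Rp : t\leq T_x\}$ and $x\mapsto T_x$ bornologous, and given $\varphi\colon X\to X$ coarse with $\varphi(X)\subset \Xvis$, the subspace $Z':=\{(x,t)\in X\times \Rp : t\leq T_{\varphi(x)}\}$ together with $H'(x,t):=H(\varphi(x),t)$ is a coarse homotopy between $g_0\circ\varphi$ and $g_1\circ\varphi$: indeed $x\mapsto T_{\varphi(x)}$ is bornologous as a composite of bornologous maps, the assignment $(x,t)\mapsto(\varphi(x),t)$ is a coarse map from $Z'$ to $Z$ for the $\ell_1$-metrics, and hence $H'$ is coarse as a composite. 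Taking for $H$ a coarse homotopy between $\radp \partial F$ and $F|_{\Xvis}$ as furnished by Lemma~\ref{lem:close}, this produces a coarse homotopy between $\radp \partial F\circ\varphi = \rad \partial F$ and $F|_{\Xvis}\circ\varphi = F\circ\varphi$, the last equality holding because $\varphi(X)\subset \Xvis$.

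For the second link I would use the dual fact that coarse homotopy is stable under postcomposition with a coarse map: if $K\colon W\to X$ is a coarse homotopy between $\psi_0,\psi_1\colon X\to X$ and $F\colon X\to Y$ is coarse, then $F\circ K$ is a coarse homotopy between $F\circ\psi_0$ and $F\circ\psi_1$. Applied to the coarse homotopy from $\varphi$ to $\id_X$ supplied by Proposition~\ref{prop:deform}, this shows that $F\circ\varphi$ is coarsely homotopic to $F\circ\id_X = F$. Transitivity of coarse homotopy then yields that $\rad \partial F$ is coarsely homotopic to $F$.

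I do not anticipate a genuine obstacle here: all the analytic content sits in Lemma~\ref{lem:close} and Proposition~\ref{prop:deform}, and what remains is the bookkeeping that a precomposed, respectively postcomposed, coarse homotopy is again a coarse homotopy. The one point requiring a line of justification is that the new height function $x\mapsto T_{\varphi(x)}$ is bornologous, which is immediate since $\varphi$ and $x\mapsto T_x$ are both bornologous, and that $Z'$ has the required form $\{(x,t)\in X\times\Rp : 0\leq t\leq T'_x\}$, which holds by construction.
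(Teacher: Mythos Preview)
Your proposal is correct and follows exactly the same strategy as the paper's proof: use Lemma~\ref{lem:close} to pass from $\rad\partial F=\radp\partial F\circ\varphi$ to $F\circ\varphi$ via precomposition with $\varphi$, then use that $\varphi$ is coarsely homotopic to $\id_X$ (Proposition~\ref{prop:deform}) and postcompose with $F$. The only difference is that you spell out the stability of coarse homotopy under pre- and postcomposition with coarse maps, which the paper takes for granted.
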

\begin{proof}
 By Lemma~\ref{lem:close}, 
 $\rad \partial F= \radp \partial F \circ \varphi$ is coarsely homotopic to
 the composite $F\circ \varphi$. In \cite[Section 5.5]{FO-CCH}, it is proved 
 that $\varphi$ is coarsely homotopic to the identity. Therefore
 $\rad \partial F$ is coarsely homotopic to $F$.
\end{proof}

\bibliographystyle{amsplain} \bibliography{/Users/tomo/Library/tex/math}

\bigskip
\address{ Yuuhei Ezawa \endgraf
Department of Mathematical Science,
Tokyo Metropolitan University,
Minami-osawa Hachioji, Tokyo, 192-0397, Japan
}

\textit{E-mail address}: \texttt{ezawa.zawazawa@gmail.com}

\bigskip

\address{ Tomohiro Fukaya \endgraf
Department of Mathematical Science,
Tokyo Metropolitan University,
Minami-osawa Hachioji, Tokyo, 192-0397, Japan
}

\textit{E-mail address}: \texttt{tmhr@tmu.ac.jp}



\end{document}